\begin{document}

\title[Compound multivariate Hawkes processes]{Compound Multivariate Hawkes Processes:\\ Large Deviations and Rare Event Simulation}

\author{Raviar~S. Karim, Roger J.~A. Laeven, and Michel Mandjes}

\begin{abstract}
In this paper, we establish a large deviations principle for a multivariate compound process induced by a multivariate Hawkes process with random marks.
Our proof hinges on showing essential smoothness of the limiting cumulant of the multivariate compound process, resolving the inherent complication that this cumulant is implicitly characterized through a fixed-point representation.
We employ the large deviations principle to derive logarithmic asymptotic results on the marginal ruin probabilities of the associated multivariate risk process.
We also show how to conduct rare event simulation in this multivariate setting using importance sampling and prove the asymptotic efficiency of our importance sampling based estimators.
The paper is concluded with a systematic assessment of the performance of our rare event simulation procedure.
\vb

\noindent
{\sc Keywords.} Hawkes processes $\circ$ compound processes $\circ$ mutual excitation $\circ$ large deviation $\circ$ ruin probability $\circ$ rare event simulation.

\vb

\noindent
{\sc MSC 2010 subject classifications. \textit{Primary}: 60G55, 60F10; \textit{Secondary}: 60E10, 62E20, 91G05.}

\vb

\noindent
{\sc Acknowledgements and Affiliations.} 
We are very grateful to Yacine A\"it-Sahalia, Frank den Hollander,
Lex Schrijver and seminar participants at Kyushu University, %(January 2023) 
the University of Vienna, %(September 2022) 
the University of Amsterdam, %(December 2022) 
the online Stochastic Networks, Applied Probability, and Performance (SNAPP) seminar 
and the Conference on Stochastic Models VII at the Mathematical Research and Conference Center
in B\c edlewo
for helpful comments.
RK and RL are with the Dept.~of Quantitative Economics, University of Amsterdam.
RL is also with E{\sc urandom}, Eindhoven University of Technology, and with C{\sc ent}ER, Tilburg University;
his research was funded in part by the Netherlands Organization for Scientific Research under grant NWO VICI 2019/20.
MM is with the Korteweg-de Vries Institute for Mathematics, University of Amsterdam. %, Science Park 904, 1098 XH Amsterdam, the Netherlands.
MM is also with E{\sc urandom}, Eindhoven University of Technology, %Eindhoven, the Netherlands,
and with the Amsterdam Business School, University of Amsterdam; %Amsterdam, the Netherlands;
his research was funded in part by the Netherlands Organization for Scientific Research under the Gravitation project N{\sc etworks}, grant 024.002.003.
Email addresses: \tt{R.S.Karim@uva.nl}, \tt{R.J.A.Laeven@uva.nl}, \tt{M.R.H.Mandjes@uva.nl}.

\vb

\noindent {\it Date}: {\today}.

\end{abstract}

\maketitle

\section{Introduction}

Mutually exciting processes, or multivariate Hawkes processes (\cite{H71,HO74}), constitute an important class of point processes, particularly suitable to describe stochastic dependences among occurrences of events across time and space.
Due to their built-in feedback mechanism, they are a natural contender to model contagious phenomena where clusters of events occur in both the temporal and spatial dimensions.
Over the past decades, Hawkes processes have been increasingly applied across a broad variety of fields,
such as finance (\cite{ACL15,ALP14,BDHM13,H18}), social interaction (\cite{CS08}), neuroscience (\cite{RS10}), seismology (\cite{O88,ILMY22}), and many others.

The key property of a Hawkes process is that it exhibits ‘self-exciting’ behavior: informally, any event instantaneously increases the likelihood of, hence potentially triggers, additional future events. 
A crucial element in its definition is the so-called {\it decay function} that quantifies how quickly the effect of an initial event on future events vanishes.
Choosing this function to be exponential renders the model Markovian, which facilitates the explicit evaluation of various relevant risk and performance metrics (e.g.,\ transient and stationary moments). 
In practical applications, however, it can be more natural to allow for other, i.e., not necessarily exponential, decay functions admitting non-Markovian, and long-memory, properties but making the analysis substantially more challenging; 
see, e.g., \cite{F19,L20}
who indicate the relevance of non-Markovian models in describing contagious phenomena.

In applied probability and mathematical risk theory, Hawkes processes have been used to model the claim arrival process, and, likewise, {\it compound}\, Hawkes processes to model the associated cumulative claim process that an insurance firm is facing; see the related literature discussed in detail below. 
In collective risk theory, {\it multivariate}\, Hawkes processes provide an appealing candidate for modeling, for example, the claim arrival process associated with technological risks; see, e.g., \cite{BGIPW17} who apply the contagion model of \cite{ACL15} to model cyber attacks and also \cite{BBH21}.
Bearing in mind that ruin and exceedance probabilities ought to be kept small, a primary research goal concerns their analysis in the asymptotic regime in which the initial reserve level of the insurer or the time horizon of aggregation grows large, such that the event of interest becomes increasingly rare.
This explains the interest in deriving large deviations principles for (compound) Hawkes processes, providing a formal tool to assess their rare event behavior, and facilitating in particular the identification of the asymptotics of ruin and exceedance probabilities.
At the same time, it is noted, however, that large deviations results usually yield rough, logarithmic asymptotics only, in that they focus on identifying the associated decay rate. 
To remedy this, one could attempt to develop `large deviations informed' simulation techniques by which rare events can be evaluated fast and accurately.
This is particularly useful when the probabilities of rare events are too small to be estimated with reasonable accuracy using regular Monte Carlo simulations.

Whereas large deviations for {\it univariate} Hawkes processes are well understood, their {\it multivariate} counterpart is to a large extent unexplored.
In this context, we mention \cite{ZBGG15}, which considers a broad class of multivariate affine processes of Markovian type, covering the special case of the multivariate Hawkes process with an exponential decay function; see also \cite{GZ19} for refinements. 
In addition, a moderate deviations result has been derived in \cite{Y18}.
To the best of our knowledge, however, large deviations principles for multivariate compound Hawkes processes allowing for general decay functions have not been established, and in addition, no rare event simulation techniques have been developed in this setting.
These are the main subjects of this paper.

In the univariate case, large deviations results for compound Hawkes processes with general decay function have been derived in \cite{ST10} building on \cite{BT07}. 
The underlying argumentation relies on the {\it cluster representation} of the driving Hawkes process, as developed in the seminal work \cite{HO74}, from which it is concluded that the cluster size follows a so-called {\it Borel} distribution.
A crucial element in proving the large deviations principle lies in showing that the limiting cumulant of the random object under study is {\it steep}, entailing that its derivative grows to infinity when approaching the boundary of its domain, 
such that the G\"artner-Ellis Theorem can be invoked.
In the univariate (compound) Hawkes case considered in \cite{ST10}, steepness could be established by using the explicit expression for the cluster size distribution.
When studying large deviations for {\it multivariate} (compound) Hawkes processes, however, a main technical difficulty that arises is that the cluster size distribution of the process is not known in closed form.
In the multivariate setting, all one has is a vector-valued fixed-point representation for the limiting cumulant of the (multivariate) cluster size, as was derived in \cite{KLM21}.
Importantly, this relation does not allow the closed-form identification of the limiting cumulant (let alone that one can find the distribution itself), entailing that one cannot explicitly characterize the boundary of its domain.

In light of the gaps in the literature described above, the contributions of this paper are the following.
\begin{itemize}
\item[$\circ$]
First, we establish a large deviations principle for multivariate compound Hawkes processes allowing for general decay functions.
We also allow the Hawkes process to be {\it marked}, such that the intensity process experiences jumps of random size constituting another potential source of rare, atypical behavior.
We have succeeded in establishing steepness based on an implicit fixed-point representation for the limiting cumulant of the joint cluster size distribution.
Specifically, without having an explicit expression for the limiting cumulant, and without having an explicit characterization of its domain, we prove that the derivative of the limiting cumulant grows to infinity when approaching the boundary of its domain.
This steepness property facilitates the use of the Gärtner-Ellis Theorem, so as to establish the desired large deviations principle.
The mathematical details of the required multivariate analysis are involved.

\item[$\circ$]
Second, we characterize the asymptotic behavior of the ruin probability for the marginal ruin processes in the regime that the initial reserve level grows large.
We prove that this ruin probability decays essentially exponentially, with the corresponding decay rate being equal to the unique zero of the limiting cumulant pertaining to that marginal. 
The proof of the lower bound on the decay rate is a direct application of our large deviations principle for multivariate compound Hawkes processes. 
The corresponding upper bound is established by a time-discretization argument, a union bound, and by showing that one can neglect the contributions to the resulting sum due to small and large time scales, in combination with frequent use of the well-known Chernoff bound.

\item[$\circ$]
Third, we develop an importance sampling algorithm for estimating rare event probabilities in our multivariate setting.
More precisely, we first derive the parameters of the exponentially twisted multivariate Hawkes process.
%The associated likelihood ratio features 
%elements of those 
%in \cite[Theorem 2.2]{ST10}, 
%suitably extended to the multivariate setting with random marks.
The identification of the exponential change of measure is non-trivial, as some of the relevant functions pertaining to the model under the original measure are only known as solutions of 
a vector-valued fixed-point representation. %Remarkably, the derivation of this change of measure shows that the exponentially twisted multivariate Hawkes process is again a multivariate Hawkes process.
The twisted marginal ruin process has positive drift, yielding ruin with probability one under the new measure, but with the likelihood ratio being bounded by a function that decays exponentially in the initial reserve, thus leading to a considerable speedup in importance sampling relative to regular Monte Carlo simulation.
We prove that this estimator is, in fact, asymptotically efficient in the sense of Siegmund (\cite{S76}), in passing also establishing a Lundberg-type upper bound on the ruin probability.
In addition, we devise an asymptotically efficient importance sampling algorithm for estimating the probability of the multivariate compound Hawkes process (at a given point in time, that is) attaining a rare large value. 
The attainable speedup, relative to regular Monte Carlo simulation, is quantified through a series of simulation experiments.
\end{itemize}

Without attempting to provide an exhaustive overview, we now review a few important related papers, all of which focus on the {\it univariate} setting. 
We already mentioned \cite{ST10}, which analyzes the asymptotic behavior of ruin probabilities, under the assumption of light-tailed claims, drawing upon earlier large deviations results derived in \cite{BT07} for more general Poisson cluster processes.
Furthermore, in \cite{ST10}, 
an importance sampling based algorithm is proposed that is capable of efficiently generating estimates of the rare event probabilities of interest.
In \cite{KZ15}, the limiting cumulant of the cluster size distribution is implicitly characterized for the setting with random marks using a fixed-point argument, while proving a large deviations principle using the
G\"artner-Ellis theorem for the upper bound and an exponential tilting method for the lower bound.
Where the contributions above focus primarily on the case of light-tailed claims, subexponentially distributed claims are studied in \cite{Z13}, in the context of a non-stationary version of the Hawkes process.
For (non-compound) Hawkes processes (i.e., not involving claims), `precise' large deviations results, providing asymptotics beyond the leading order term, are obtained in \cite{GZ21}.
The setting of a large initial intensity is studied in \cite{GZ18a} and \cite{GZ18b}. 
For the more general class of non-linear Hawkes processes, \cite{Z14} proves the process-level large deviations, and \cite{Z15} derives large deviations in the Markovian setting.

The rest of this paper is organized as follows. 
In Section~\ref{sec:ModelBranching}, we introduce the relevant processes and discuss some basic properties that are used throughout the paper.
Section~\ref{sec:TransformAnalysis} derives results on the transform of the joint cluster size distribution,
and provides an implicit characterization of the domain of the limiting cumulant.
Section~\ref{sec:LargeDevs} establishes the large deviations principle for the multivariate compound Hawkes process with general decay function and random marks.
In Section~\ref{sec:RuinProbs}, we consider the associated multivariate risk process, with the objective to characterize the decay rate of the marginal ruin probability.
In Section~\ref{sec:simulation}, we exploit the large deviations principle to develop an importance sampling algorithm to efficiently estimate rare event probabilities.
Section~\ref{sec:Numerics} numerically demonstrates the performance of our importance sampling based estimators.
Concluding remarks are in Section~\ref{sec:Con}. 
Some auxiliary proofs are relegated to the Appendix.

\section{Multivariate Compound Hawkes Processes}\label{sec:ModelBranching}

In this section, we first provide the definitions of multivariate Hawkes and compound Hawkes processes, and next introduce some objects and discuss some properties that are relevant in the context of this paper.
Throughout, we use the boldface notation ${\bm x} =(x_1,\ldots,x_d)^\top$ to denote a $d$-dimensional vector, for a given dimension $d\in{\mathbb N}$.
Inequalities between vectors are understood componentwise, e.g., $\bm{x} > \bm{y}$ means $x_i > y_i$ for all $i=1,\dots,d$.

Consider a $d$-dimensional c\`adl\`ag counting process ${\bm N}(\cdot)\equiv ({\bm N}(t))_{t\in{\mathbb R_+}}$,
where each increment $N_i(t) - N_i(s)$ records the number of points in component $i\in[d]:=\{1,\dots,d\}$
in the time interval $(s,t]$, with $s<t$.
We label the points by considering, for each $j\in[d]$, a sequence of a.s.\ increasing positive random variables $\bm{T}_j = \{T_{j,r}\}_{r\in\nn} = \{T_{j,1},T_{j,2},\dots\}$ representing event times.
We associate to this sequence the one-dimensional counting process $N_j(\cdot)$ by setting \[N_j(t) := N_{\bm{T}_j}(0,t] =\sum_{r=1}^\infty \bm{1}_{\{T_{j,r}\leq t\}}.\]
The process $\bm{N}(\cdot) = (N_1(\cdot),\dots,N_d(\cdot))^\top$ is then the $d$-dimensional counting process associated with the sequences of event times in all components, $\bm{T}_1,\dots,\bm{T}_d$, compactly denoted by $\bm{N}(t) = \bm{N}_{\bm{T}}(0,t]$.
Throughout, the points will be referred to as \textit{events} and
the terms point process and counting process are used interchangeably for $\bm{N}(\cdot)$.
We assume that the point process starts empty, i.e., $\bm{N}(0) = \bm{0} = (0,\dots,0)^\top$.

In the original work \cite{H71}, the Hawkes process is defined by relying on the concept of the conditional intensity function.
An alternative, equivalent definition, known as the cluster process representation, can be given by representing the Hawkes process as a Poisson cluster process; 
it was first described in \cite{HO74} in the setting of the conventional univariate Hawkes process, see also \cite[Example 6.3(c)]{DVJ07} and \cite[Ch.\ IV]{L09}.
The cluster process representation distinguishes between two types of events: 
first, there are \textit{immigrant} events generated according to a homogeneous Poisson process with a given rate; 
and second, there are \textit{offspring} events generated by an inhomogeneous Poisson process with rates that account for self-excitation and, in the multivariate context also, cross-excitation.
In the following, we introduce the relevant terminology and provide a formal definition of the process.

For $j\in[d]$, we consider {\it base rates} $\overline{\lambda}_j \geqslant 0$, with at least one of the base rates being strictly positive.
For each combination $i,j\in[d]$, we let the {\it decay function} $g_{ij}(\cdot):\mathbb{R}_{+}\rightarrow\mathbb{R}_{+}$ be non-negative, non-increasing, and integrable. 
Also, for $j\in[d]$, we define the {\it random marks} through the generic non-negative, non-degenerate random vector $\bm{B}_{j} = (B_{1j},\dots,B_{dj})$, asserting that the sequence of random marks $\{\bm{B}_{j,r}\}_{r\in\nn}$ consists of i.i.d.\ random vectors that are distributed as $\bm{B}_{j}$.
We allow the random variables $B_{ij,r}$ to be dependent for fixed $j$ and $r$.
Finally, we let $K_{ij}(\cdot)$ be an inhomogeneous Poisson process with intensity $B_{ij,r}\,g_{ij}(\cdot)$, given the value of $B_{ij,r}$.
With these elements in place, the following definition describes the cluster process representation for a multivariate Hawkes process.

\begin{definition}[{multivariate Hawkes process}]
\label{def: hawkes cluster}
Define a $d$-dimensional point process $\bm{N}(\cdot)$ componentwise by $N_j(t) = N_{\bm{T}_j}(0,t]$ for $j\in[d]$ and $t>0$, where the sequences of event times $\bm{T}_1,\dots,\bm{T}_d$ are generated as follows:
\begin{enumerate}
    \item[{\em(i)}] First, for each $j\in[d]$, let there be a sequence of immigrant event times $\{T_{j,r}^{(0)}\}_{r\in\nn}$ on the interval $(0,\infty)$ generated by a homogeneous Poisson process $I_j(\cdot)$ with rate $\overline{\lambda}_j$.
    \item[{\em(ii)}] Second, let each immigrant event \textit{independently} generate a $d$-dimensional cluster $\bm{C}_j \equiv \bm{C}_{T_{j,r}^{(0)}}$, consisting of event times associated with generations of events:
    \begin{enumerate}
        \item[{\em(a)}] The immigrant with event time $T_{j,r}^{(0)}$ is labeled to be of generation $0$ and into each component $m\in[d]$, it generates a sequence of first-generation event times $\{T_{m,r}^{(1)}\}_{r\in\nn}$ on the interval $(T_{j,r}^{(0)},\infty)$, according to $K_{mj}(\cdot - T_{j,r}^{(0)})$ with $B_{mj,r}$ the random mark associated to $T_{j,r}^{(0)}$.
        \item[{\em(b)}] Iterating {\em (a)} above, with $T_{m,r}^{(n-1)}$ designating the $r$-th event time of generation $n-1$ in component $m\in[d]$, yields generation $n$ event times $\{T_{l,r}^{(n)}\}_{r\in\nn}$ in component $l\in[d]$ on the interval $(T_{m,r}^{(n-1)}, \infty)$, generated according to $K_{lm}(\cdot - T_{m,r}^{(n-1)})$.
    \end{enumerate}
\end{enumerate}
Upon taking the union over all generations, we obtain, for each component $j\in[d]$,
\begin{align*}
    \bm{T}_j = \{T_{j,r}\}_{r\in\nn} = \bigcup_{n=0}^\infty \{T_{j,r}^{(n)}\}_{r\in\nn}.
\end{align*}
The process $\bm{N}(\cdot)$ defined above for $t>0$ and with $\bm{N}(0)=\bm{0}$ constitutes a multivariate Hawkes process.
\end{definition}

To ensure that the clusters described in part~(ii) of Definition~\ref{def: hawkes cluster} are a.s.\ finite, we assume that a \textit{stability condition} applies throughout this paper.
It is shown in \cite{H71} that this stability condition guarantees non-explosiveness of $\bm{N}(\cdot)$, see also \cite[Example 8.3(c)]{DVJ07}.

\begin{assumption} \label{ass: stability condition}
Assume that the matrix $\bm{H} := (h_{mj})_{m,j\in[d]}$ with elements
\begin{align}
\label{eq: def mu multivariate}
    h_{mj} := \ee[B_{mj}] c_{mj},
\end{align}
with $c_{mj} = \int_0^\infty g_{mj}(v)\,\ddiff v$, has spectral radius strictly smaller than $1$.
\end{assumption}

We next define the multivariate compound Hawkes process as follows.
Let ${d^\star}\in\nn$ be fixed and note that we allow $d \neq {d^\star}$.
For each $j\in[d]$, let $\{\bm{U}_{j,r}\}_{r\in\nn} = \{(U_{1j,r},\dots,U_{{d^\star}j,r})^\top\}_{r\in\nn}$ be a sequence of non-negative, non-degenerate i.i.d.\ random vectors of length ${d^\star}$.
We allow the random variables $U_{ij,r}$ to be dependent for fixed $j$ and $r$. 

\begin{definition}[multivariate compound Hawkes process]
\label{def: compound Hawkes}
Define $\bm{Z}(\cdot) := (Z_1(\cdot), \ldots, Z_{{d^\star}}(\cdot))^{\top}$ for each component $Z_i(\cdot)$ with $i\in[{d^\star}]$ by
\begin{align}
\label{eq: compound hawkes entry}
    Z_i(t) := \sum_{j=1}^d \sum_{r=1}^{N_j(t)} U_{ij,r},\quad t>0,
\end{align}
where $\bm{U}_{j,r}= (U_{1j,r},\dots,U_{{d^\star}j,r})^\top$ is drawn independently for every event in $N_j(t)$, with $j\in[d]$. 
The process $\bm{Z}(\cdot)$ defined above for any $t>0$ constitutes a multivariate compound Hawkes process.
\end{definition}

If we define the random matrix $\bm{U}\in \rr^{{d^\star} \times d}_{+}$ as
\begin{align}
    \bm{U} \equiv \begin{bmatrix}
    \bm{U}_1,\dots,\bm{U}_d
    \end{bmatrix}
    := \begin{bmatrix}
    U_{11} & U_{12} & \dots & U_{1d} \\
    U_{21} & U_{22} & \dots & U_{2d} \\
    \vdots & \vdots & \ddots & \vdots \\
    U_{{d^\star}1} & U_{{d^\star}2} & \cdots & U_{{d^\star}d}
    \end{bmatrix},
\end{align}
we can represent Eqn.~\eqref{eq: compound hawkes entry} in vector-matrix form by
\begin{align}
    \bm{Z}(t) = \bm{N}(t) \star\bm{U} := \sum_{j=1}^d \sum_{r=1}^{N_j(t)} \bm{U}_{j,r},
\end{align}
where $\star$ denotes the compound sum operation.

We proceed with a brief discussion of the dimensionality of the objects appearing in Definition~\ref{def: compound Hawkes}.
The Hawkes process $\bm{N}(\cdot)$ is of dimension $d$ and the random vectors $\bm{U}_j$ are of dimension ${d^\star}$, which results in the compound Hawkes process $\bm{Z}(\cdot)$ being also of dimension ${d^\star}$.
This reflects that the random variables of the type $U_{ij}$, with $i\in[{d^\star}]$ and $j\in[d]$, can be interpreted as the effect that an event in the $j$-th component of the Hawkes process $\bm{N}(\cdot)$ has on the $i$-the component of the compound Hawkes process $\bm{Z}(\cdot)$.
As stated before, we allow $d\neq {d^\star}$.
Intuitively, for instance, in the context of insurance, this means that the number of risk drivers may be larger $(d> {d^\star})$ or smaller $(d < {d^\star})$ than the number of insurance product categories.

We now introduce some objects related to the cluster process representation that are relevant for later analysis.
Recall that for each immigrant in component $j\in[d]$, the $d$-dimensional cluster $\bm{C}_j$ from Definition~\ref{def: hawkes cluster} contains the sequences of event times in each component that have the immigrant with event time $T_{j,r}^{(0)}$ as oldest ancestor.
We associate to $\bm{C}_j$ the $d$-dimensional \textit{cluster point process} $\bm{S}_j(\cdot)$, by setting 
\begin{align}
    \bm{S}_j(u) := \bm{S}_{\bm{C}_j}(0,u],
\end{align}
such that it counts the number of events of $\bm{C}_j$ on the interval $(0,u]$, where $u= t-T_{j,r}^{(0)} >0$ is the remaining time after the arrival of the immigrant event.
Concretely, we have
\begin{align}
    \bm{S}_j(u) :=
    \begin{bmatrix}
    S_{1\leftarrow j}(u) \\
    \vdots \\
    S_{d\leftarrow j}(u)
    \end{bmatrix},
\end{align}
where each entry $S_{i\leftarrow j}(u)$ records the number of events generated into component $i\in[d]$ in the cluster $\bm{C}_j$, with as oldest ancestor the immigrant event in component $j$ that generated the cluster.
To avoid double counts, we let the immigrant itself be included in the cluster (only) when $i = j$.

If we let $u$ tend to infinity, the entries of the random vector $\bm{S}_j(u)$ ultimately count the \textit{total} number of events within the cluster $\bm{C}_j$ generated into each component $i\in[d]$.
Observe that $u\mapsto \bm{S}_j(u)$ is increasing componentwise and $\sup_{u\in\rr_+}\norm{\bm{S}_j(u)}_{\rr^d} < \infty$ with probability 1 due to Assumption~\ref{ass: stability condition}.
Hence, we can define a random vector that counts the total number of events in all components, or simply {\it cluster size}, by setting $\bm{S}_j := \lim_{u\to\infty} \bm{S}_j(u)$, where convergence is understood in the a.s.\ sense. 

One can interpret these clusters in terms of $d$-type Galton-Watson processes, where the total progeny equals the sum of all generations of offspring that descend from one individual (\cite{J75}).
Suppose the Galton-Watson process starts with an individual of type $j\in[d]$, and let $\bm{S}_j^{(k)}$ denote the $k$-th generation of descendants.
Then one can write
\begin{align}
\label{eq: cluster equals sum of generations}
    \bm{S}_j = \sum_{k=0}^\infty \bm{S}_j^{(k)},
\end{align}
where $\bm{S}_j^{(0)} = \bm{e}_j$, the unit vector (i.e., with $j$-th entry equal to $1$, and other entries equal to 0).
In \cite{J75}, the total progeny, i.e. cluster size, of such a process is analyzed in the one-dimensional setting and shown to have a so-called Borel distribution.
For higher-dimensional Hawkes processes, by using~\cite[Theorem 1.2]{CL14}, it is, in principle, also possible to derive a representation of the multivariate cluster size distribution.
However, the resulting expression is neither explicit nor workable for the goal at hand due to (highly) convoluted sums that arise in the derivation.
More specifically, the multiplicity of the different possible sample paths to generate a certain number of events in each component yields a complex combinatorial problem. 

We conclude this section by stating two convergence results that will be needed later in this paper.
Under the stability condition, we have that the Hawkes process $\bm{N}(\cdot)$ satisfies the following strong law of large numbers, as shown in \cite{BDHM13}:
as $t\to\infty$, we have
\begin{align}
\label{eq: LLN for hawkes process N}
    \frac{\bm{N}(t)}{t} \to 
    (\bm{I} - \bm{H})^{-1}\overline{\bm{\lambda}}, 
\end{align}
a.s., where $\bm{\overline{\lambda}} = (\overline{\lambda}_1,\dots,\overline{\lambda}_d)^\top$.
This result naturally extends to the corresponding compound Hawkes process $\bm{Z}(\cdot)$
(see \cite[Theorem 1]{GRS20}):
as $t\to\infty$, a.s., 
\begin{align}
\label{eq: LLN for compound hawkes Z}
    \frac{\bm{Z}(t)}{t} \to\ee[\bm{U}] (\bm{I} - \bm{H})^{-1}\overline{\bm{\lambda}}. 
\end{align}

%\textcolor{red}{NOTES ABOUT $\S$2. (1)~There was a reference to a precise explanation of these generations of offspring in the appendix, but I couldn't find it there. I commented it out, but perhaps some background is needed, or a reference? Also, somehow references to the Appendix show as references to $\S$8 -- Raviar, could you fix this? (2) Should we say more about the computation of the cluster size distribution in the multivariate case? There is the paper by Chaumont, for instance. Shouldn't we say: `in principle this can be done, but it doesn't lead to any `manageable' expressions'?  Now I only mention it is `challenging'.}

%{\color{blue} RAVIAR. (1) Yes, the reference was Jagers' book; if we keep the generation notation in the text, we should refer to Jagers there. I removed that part of the appendix, because we do not use the Borel distribution or the derived multivariate version in our paper. I will fix the Appendix reference.
%(2) We could include this, with appropriate references. Chaumont is one paper, which builds on ideas of Dwass' paper from 1969.  I made an adjustment - do you think it is ok?}

\section{Transform Analysis}\label{sec:TransformAnalysis}
In this section, we discuss probability generating functions and moment generating functions pertaining to the processes introduced in the previous section, viz.\ the multivariate Hawkes and compound Hawkes processes.
These functions will play a pivotal role when deriving large deviations results later in this paper. 

It is directly seen from the definition of $\bm{Z}(t)$ that, for fixed $t>0$, its moment generating function satisfies
the following composite expression in terms of the probability generating function of $\bm{N}(t)$:
\begin{align}
\label{eq: compound hawkes characterization}
    m_{\bm{Z}(t)}(\bm{\theta}) 
    \equiv \ee\big[e^{\bm{\theta}^\top \bm{Z}(t)}\big]
    = \ee\Big[ \prod_{l=1}^d \big(m_{\bm{U}_l}(\bm{\theta})\big)^{N_l(t)} \Big],
\end{align}
where
\begin{align*}
    m_{\bm{U}_l}(\bm{\theta}) \equiv \ee[e^{\bm{\theta}^\top \bm{U}_l}] = \ee\Big[ \prod_{i=1}^{{d^\star}} e^{\theta_i U_{il}}\Big].
\end{align*}
For now, we assume $\bm{\theta}\in\rr^{{d^\star}}$ is chosen such that \eqref{eq: compound hawkes characterization} exists---we will further discuss the domain of convergence below.
We are interested in the limiting cumulant of $\bm{Z}(t)$ as $t\to\infty$, that is, we wish to analyze 
\begin{align} \label{eq: limit cumulant of Z}
    \lim_{t\to\infty} \frac{1}{t} \log m_{\bm{Z}(t)}(\bm{\theta}).
\end{align}

To derive an expression for~\eqref{eq: limit cumulant of Z}, we use a characterization of the probability generating function of $\bm{N}(t)$ in terms of the cluster point processes $\bm{S}_j(u)$, obtained in~\cite[Theorem 1]{KLM21}:
\begin{align}
\label{eq: pgf hawkes characterization}
    \ee\Big[ \prod_{l=1}^d z_l^{N_l(t)}\Big]
    &= \prod_{j=1}^d \exp\Big(\overline{\lambda}_j\int_0^t \big(\ee\Big[\prod_{l=1}^d z_l^{S_{l\leftarrow j}(u)}\Big] - 1\big)  \,\mathrm{d}u\Big),
\end{align}
where, for each $j\in[d]$, the probability generating function of $\bm{S}_j(u)$ appearing on the right-hand side of~\eqref{eq: pgf hawkes characterization} satisfies the fixed-point representation
\begin{align}\label{eq: pgf clusters fixed point time dependent}
    f_j(\bm{z},u)
    &:=\ee\Big[\prod_{l=1}^d z_l^{S_{l\leftarrow j}(u)}\Big]
    =  z_j \ee\Big[ \exp\Big(\sum_{m=1}^d B_{mj} \int_0^u g_{mj}(v)\Big(f_m(\bm{z},u-v) -1 \Big)\,\mathrm{d}v\Big)\Big];
\end{align}
see~\cite[Theorem 2]{KLM21}.

In order to exploit this characterization to establish our large deviations result in the multivariate compound Hawkes setting, we need to analyze the domain of $\bm{z} = (z_1,\dots,z_d)^\top$ for which Eqns.~\eqref{eq: pgf hawkes characterization} and \eqref{eq: pgf clusters fixed point time dependent} are valid, i.e., where the probability generating functions of $\bm{S}_j(u)$ exist.
More precisely, since we focus on the regime $t\to\infty$ in~\eqref{eq: limit cumulant of Z}, we need to consider the probability generating function of the total cluster size $\bm{S}_j$ instead of $\bm{S}_j(u)$, which in the sequel is denoted by
\begin{align}
    f_j(\bm{z}) := \lim_{u\to\infty} f_j(\bm{z},u) = \ee\Big[\prod_{l=1}^d z_l^{S_{l\leftarrow j}}\Big],
\end{align}
and uncover its domain.
Let $\bm{f}:\rr_+^d \to \overline{\rr}^d$ be given by $\bm{f}(\bm{z}) = (f_1(\bm{z}),\dots,f_d(\bm{z}))^\top$ and denote its effective domain by ${\mathscr D}_f := \{ \bm{z} \in \rr_+^d : \norm{\bm{f}(\bm{z})}_{\rr^d} < \infty\}$.
For the set ${\mathscr D}_f$, denote the interior by ${\mathscr D}_f^\circ$ and the boundary by $\partial{\mathscr D}_f$.

Observe that the right-hand side of Eqn.~\eqref{eq: pgf clusters fixed point time dependent} is expressed in terms of the moment generating function of the random vector $\bm{B}_j$.
We assume the following to hold throughout the paper.

\begin{assumption} \label{ass: mgfs B exist}
Assume that for some $\bm{\vartheta} \in \rr_+^{d}$,
\begin{align}
    m_{\bm{B}_j}(\bm{\vartheta})
    =\ee\Big[\exp\Big(\sum_{m=1}^d B_{mj}\vartheta_m\Big)\Big]
    < \infty,
\end{align}
for all $j\in[d]$.
\end{assumption}

The following result gives an implicit characterization of $\bm{f}(\cdot)$ and its domain ${\mathscr D}_f$ in terms of a fixed-point representation.

\begin{proposition} \label{prop: characterize f(z) and D_f explicit}
The vector-valued function $\bm{f}(\bm{z})$ is the unique increasing function that satisfies
\begin{align} \label{eq: fixed point equation pgf of S_j (steady state)}
    f_j(\bm{z})  
    = z_j \,\ee\Big[\exp\Big(\sum_{m=1}^d B_{mj}c_{mj}(f_m(\bm{z}) - 1)\Big)\Big],
\end{align}
for $\bm{z}\in\rr_+^d$ such that $\bm{z} \leqslant \hat{\bm{z}}_{\bm{r}} \equiv \hat{\bm{z}}$,
where, for an arbitrarily given positive vector $\bm{r} \in \rr_+^d$, $\hat{\bm{z}}=(\hat{z}_{1},\dots,\hat{z}_{d})^\top$ is given for each $j\in[d]$ by 
\begin{align} \label{eq: zstar solution multivariate}
    \hat{z}_{j} = r_j \left( \sum_{k=1}^d r_k \ee\Big[ B_{kj}c_{kj}\exp\Big(\sum_{m=1}^d B_{mj}c_{mj}(\hat{x}_m - 1)\Big) \Big] \right)^{-1},
\end{align}
and $\hat{\bm{x}} = (\hat{x}_1,\dots,\hat{x}_d)^\top$ solves
\begin{align} \label{eq: system eqn xstar multivariate}
    &x_j \left( \sum_{k=1}^d r_k \ee\Big[ B_{kj}c_{kj}\exp\Big(\sum_{m=1}^d B_{mj}c_{mj}(x_m - 1)\Big) \Big] \right) \nonumber\\ 
    &= r_j\ee\Big[ \exp\big(\sum_{m=1}^d B_{mj}c_{mj}(x_m - 1)\Big)\Big].
\end{align}
\end{proposition}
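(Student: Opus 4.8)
The plan is to obtain the fixed-point equation~\eqref{eq: fixed point equation pgf of S_j (steady state)} by passing to the limit $u\to\infty$ in the time-dependent relation~\eqref{eq: pgf clusters fixed point time dependent}, and then to characterize the boundary of $\mathscr{D}_f$ via a Perron--Frobenius/implicit-function argument applied to the Jacobian of the fixed-point map. First I would fix $\bm{z}\in\rr_+^d$ and observe that $u\mapsto f_j(\bm{z},u)$ is monotone in $u$ (increasing if $z_j\geqslant 1$ in the relevant components, and in general controlled by monotonicity of $\bm{S}_j(u)$ in $u$ together with the stability of Assumption~\ref{ass: stability condition}), so that $f_j(\bm{z})=\lim_{u\to\infty}f_j(\bm{z},u)$ exists in $\overline{\rr}$. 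On the event that the limit is finite, monotone (or dominated) convergence applied inside the expectation on the right-hand side of~\eqref{eq: pgf clusters fixed point time dependent}, using $\int_0^u g_{mj}(v)\,\ddiff v\to c_{mj}$ and the continuity of $m_{\bm{B}_j}$ guaranteed by Assumption~\ref{ass: mgfs B exist}, yields~\eqref{eq: fixed point equation pgf of S_j (steady state)}. Uniqueness among increasing functions follows because $\bm{f}(\cdot)$ is the probability generating function of the a.s.-finite random vector $\bm{S}_j$ (it is the minimal nonnegative solution of the Galton--Watson total-progeny fixed point~\eqref{eq: cluster equals sum of generations}), and any second increasing solution would dominate this minimal one, forcing equality by a standard branching-process argument.

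For the domain statement, the key idea is that $\hat{\bm z}$ is exactly the point on $\partial\mathscr{D}_f$ lying in the direction $\bm r$, i.e.\ the largest $\bm z$ along the ray $\{t\bm r\}$ (suitably parametrized) for which the system~\eqref{eq: fixed point equation pgf of S_j (steady state)} still admits a finite increasing solution. To see this, write the fixed-point map as $\bm z \mapsto \bm{F}(\bm z,\bm x)$ where $F_j(\bm z,\bm x)=z_j\,\ee[\exp(\sum_m B_{mj}c_{mj}(x_m-1))]$, and note that by the implicit function theorem a finite solution $\bm x=\bm f(\bm z)$ persists as long as $\bm{I}-D_{\bm x}\bm F(\bm z,\bm f(\bm z))$ is invertible; the matrix $D_{\bm x}\bm F$ has nonnegative entries $z_j\,\ee[B_{kj}c_{kj}\exp(\sum_m B_{mj}c_{mj}(x_m-1))]$ (note the transpose bookkeeping between index $j$ as the ``parent'' type and $k$ as the ``child'' type), and by Perron--Frobenius the solution branch terminates precisely when its spectral radius hits $1$. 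Parametrizing the relevant ray by a right Perron eigenvector makes this explicit: the condition that $\bm r$ be a $1$-eigenvector of $D_{\bm x}\bm F^\top$ evaluated at $(\hat{\bm z},\hat{\bm x})$ together with the fixed-point equation gives precisely the coupled system~\eqref{eq: zstar solution multivariate}--\eqref{eq: system eqn xstar multivariate}: equation~\eqref{eq: system eqn xstar multivariate} is the fixed-point equation $\hat x_j=\hat z_j\,\ee[\exp(\cdots)]$ rewritten after substituting the expression~\eqref{eq: zstar solution multivariate} for $\hat z_j$, and~\eqref{eq: zstar solution multivariate} itself encodes the criticality (eigenvector) constraint, with the arbitrary vector $\bm r$ playing the role of the free scaling of the eigenvector and hence selecting which boundary point is described.

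I would then argue both inclusions. For $\bm z\leqslant\hat{\bm z}$: monotonicity of $\bm F$ in both arguments shows that the iterates $\bm f^{(0)}\equiv\bm 0$, $\bm f^{(n+1)}=\bm F(\bm z,\bm f^{(n)})$ increase to the minimal solution, and one checks by induction that they are dominated by $\hat{\bm x}$ (using that $(\hat{\bm z},\hat{\bm x})$ solves the system and $\bm z\leqslant\hat{\bm z}$), so the limit is finite and equals $\bm f(\bm z)$; hence $\bm z\in\mathscr{D}_f$. For the reverse direction one shows that at $\hat{\bm z}$ the solution is still finite but the derivative $D_{\bm x}\bm F$ is critical, so that moving any further out along that direction destroys the finite fixed point; combined with convexity/monotonicity of $\mathscr{D}_f$ this pins down $\hat{\bm z}\in\partial\mathscr{D}_f$ and gives the stated description of the domain.

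\textbf{Main obstacle.} The delicate point is the boundary analysis: establishing that the solution branch of the vector fixed point terminates \emph{exactly} at the Perron--Frobenius criticality of $D_{\bm x}\bm F$ and that this is captured by the algebraic system~\eqref{eq: zstar solution multivariate}--\eqref{eq: system eqn xstar multivariate} for every choice of direction $\bm r$. This requires care because $\bm f$ is only implicitly defined, one must rule out the finite solution jumping to infinity ``prematurely'' before criticality (ruled out by monotone convergence of the iterates) and conversely show it cannot remain finite past criticality (via the Perron eigenvector: pairing the fixed-point equation with the left eigenvector yields a scalar inequality that fails beyond the critical point), and one must verify that the system~\eqref{eq: zstar solution multivariate}--\eqref{eq: system eqn xstar multivariate} indeed has a solution $\hat{\bm x}$ for each positive $\bm r$, which itself is a fixed-point/degree-theory argument leveraging Assumption~\ref{ass: stability condition} (so that near $\bm z=\bm 1$ the Jacobian is subcritical) and Assumption~\ref{ass: mgfs B exist} (finiteness of the relevant moment generating functions in a neighborhood).
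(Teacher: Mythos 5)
Your proposal is essentially correct but takes a genuinely different route in two of the three steps. For the passage to the limit $u\to\infty$ you argue as the paper does (monotonicity of $\bm{S}_j(u)$ plus monotone/dominated convergence), although the paper handles the mixed case where some $z_k\leqslant 1$ and some $z_m>1$ by splitting the product and applying monotone convergence separately to the decreasing factors, a detail your sketch glosses over. The real divergence is in the boundary and uniqueness analysis. The paper characterizes $\partial\mathscr{D}_f$ geometrically: it applies the preimage (regular level set) theorem to $\bm{G}(\bm{z},\bm{x})=\bm{F}(\bm{z},\bm{x})-\bm{x}$, identifies $\bm{G}^{-1}(\bm{0})$ as a $d$-dimensional manifold whose tangent space is $\Ker \bm{J}_{\bm{G}}$, and uses strict convexity of $\bm{G}^{-1}(\leqslant\bm{0})$ to get a bijection between directions $\bm{r}$ and boundary points $(\hat{\bm{z}}_{\bm r},\hat{\bm{x}}_{\bm r})$; uniqueness of $\bm{f}$ then comes from the implicit function theorem applied locally around each regular point of $\bm{G}^{-1}(\bm{0})$, starting at $(\bm{1},\bm{1})$ where $\bm{J}_{\bm{G},\bm{x}}=\bm{H}^\top-\bm{I}$ is invertible by Assumption~\ref{ass: stability condition}. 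You instead read the same condition $\bm{J}_{\bm{G},\bm{x}}\cdot\bm{r}=\bm{0}$ as Perron--Frobenius criticality of the nonnegative matrix $D_{\bm{x}}\bm{F}$, a framing that is arguably more illuminating probabilistically (the boundary is the branching-process critical surface) but which, to be complete, must rule out the implicit branch losing invertibility at a non-Perron eigenvalue hitting $1$ first -- the paper's convexity argument sidesteps this. (Also, $\bm{r}$ is a right $1$-eigenvector of $D_{\bm x}\bm{F}$ itself, not its transpose; you flag the bookkeeping yourself but it should be cleaned up.) Your uniqueness argument via minimality of the total-progeny PGF leaves a real gap: "any second increasing solution would dominate this minimal one, forcing equality by a standard branching-process argument" is not self-evident, since for fixed $\bm{z}$ the fixed point \eqref{eq: fixed point equation pgf of S_j (steady state)} typically has a lower and an upper branch, and one must argue (e.g., via the unique contraction fixed point near $\bm{z}=\bm{0}$ together with continuity and monotonicity in $\bm{z}$) that an increasing solution vanishing at $\bm{z}=\bm{0}$ cannot jump to the upper branch. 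Your monotone-iteration argument that $\bm{z}\leqslant\hat{\bm{z}}$ implies $\bm{z}\in\mathscr{D}_f$ is a clean elementary step the paper does not state in this form, and your identification of the boundary analysis as the crux is accurate.
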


\begin{proof}
The proof consists of three parts: \textit{(i)} identifying the limit of $f_{j}(\bm{z},u)$ as $u\to\infty$; \textit{(ii)} implicit characterization of the domain ${\mathscr D}_f$; \textit{(iii)} proving uniqueness of $\bm{f}(\cdot)$. %that satisfies~\eqref{eq: fixed point equation pgf of S_j (steady state)}.

\textit{--- Proof of (i)}. 
We show that for $\bm{z} \in {\mathscr D}_f$, we have that $f_j(\bm{z},u) \to f_j(\bm{z})$ for all $j\in[d]$.
At this point, we do not yet know the precise domain ${\mathscr D}_f$, but we do know it is a convex subset of $\rr_+^d$ and we implicitly derive it later in the proof.

When $\bm{z} =\bm{1}$, we have $\bm{f}(\bm{1},u) \equiv \bm{1} \equiv \bm{f}(\bm{1})$ and convergence follows trivially.
Observe that when $\bm{0} \leqslant \bm{z} < \bm{1}$, $f_j(\bm{z},u)$ is decreasing in $u$ and $0\leqslant f_j(\bm{z},u) < 1$,
hence, $f_j(\bm{z},u)$ converges by the monotone convergence theorem to a finite limit as $u\rightarrow\infty$, satisfying the limit of \eqref{eq: pgf clusters fixed point time dependent}.
When $\bm{z}> \bm{1}$, $f_j(\bm{z},u)$ is increasing in $u$ and either diverges to $\infty$ or converges to a finite limit, satisfying the limit of \eqref{eq: pgf clusters fixed point time dependent}.
In the intermediate case, where for some $k,m\in[d]$ one has $z_k \leqslant 1$ and $z_m > 1$, we proceed as follows.
Recall that for each $j\in[d]$, the map $u \mapsto \bm{S}_j(u)$ is a.s. increasing in all components.
We obtain the following upper bound:
\begin{align*}
    \limsup_{u\to\infty}f_j(\bm{z},u) 
    &=\limsup_{u\to\infty} \ee\Big[\prod_{i=1}^d z_i^{S_{i\leftarrow j}(u)}\Big]  \\
    &= \limsup_{u\to\infty} \ee\Big[\prod_{k:z_k\leqslant 1} z_k^{S_{k\leftarrow j}(u)}
    \prod_{m:z_m> 1} z_m^{S_{m\leftarrow j}(u)}\Big] \\
    &\leqslant \limsup_{u\to\infty} \ee\Big[\prod_{k:z_k\leqslant 1} z_k^{S_{k\leftarrow j}(u)}
    \prod_{m:z_m> 1} z_m^{S_{m\leftarrow j}}\Big] \\
    &\overset{(*)}{=}
    \ee\Big[\prod_{i=1}^d z_i^{S_{i\leftarrow j}}\Big] = f_j(\bm{z}),
\end{align*}
if the limit is finite, where in $(*)$ we have used the monotone convergence on the product over $k$, as this product is decreasing.
Similarly, we obtain the lower bound
\begin{align*}
    \liminf_{u\to\infty}f_j(\bm{z},u) 
    &\geqslant \liminf_{u\to\infty} \ee\Big[\prod_{k:z_k\leqslant 1} z_k^{S_{k\leftarrow j}}
    \prod_{m:z_m> 1} z_m^{S_{m\leftarrow j}(u)}\Big] \\
    &\overset{(*)}{=}
    \ee\Big[\prod_{i=1}^d z_i^{S_{i\leftarrow j}}\Big] = f_j(\bm{z}),
\end{align*}
which implies that the $\liminf$ and $\limsup$ coincide, and so $f_j(\bm{z},u)\to f_j(\bm{z})$ for all $j\in[d]$.
Provided all components converge, we have convergence of the vector $\bm{f}(\bm{z},u)\to\bm{f}(\bm{z})$.
Hence, from~\eqref{eq: pgf clusters fixed point time dependent} we obtain for each $j\in[d]$ and with $\bm{z}\in {\mathscr D}_f$ that
\begin{align}
    f_j(\bm{z}) 
    = z_j \,\ee\Big[\exp\Big(\sum_{m=1}^d B_{mj}c_{mj}(f_m(\bm{z}) - 1)\Big)\Big],
\end{align}
yielding a vector-valued fixed-point representation for $\bm{f}(\bm{z})$.

\textit{--- Proof of (ii)}.
We now implicitly characterize the domain ${\mathscr D}_f$.
To that end, consider the function $\bm{G}:\rr^d \times \rr^d \to \rr^d$, where each entry $G_j:\rr^d \times \rr^d \to \rr$, with $j\in[d]$, is given by
\begin{align}
    G_j(\bm{z},\bm{x}) 
    = z_j \,\ee\Big[\exp\Big(\sum_{m=1}^d B_{mj}c_{mj}(x_m - 1)\Big)\Big] - x_j.
\end{align}
Note that $\bm{G}(\cdot)$ is continuously differentiable for all $\bm{z} \in \rr^d$ and $\bm{x}\in\rr^d$ for which the respective moment generating functions of $\bm{B}_j$ are defined.
To obtain a characterization of ${\mathscr D}_f$, we need to find the set of $\bm{z}$ on the boundary of ${\mathscr D}_f$, for which $\bm{f}(\bm{z})$ exists and satisfies~\eqref{eq: fixed point equation pgf of S_j (steady state)}.
Since Eqn.~\eqref{eq: fixed point equation pgf of S_j (steady state)} is analogous to solving $\bm{G}(\bm{z}, \bm{f}(\bm{z})) = \bm{0}$, we can find the domain of $\bm{f}(\cdot)$ by investigating the set $\bm{G}^{-1}(\bm{0}) = \{(\bm{z},\bm{x}) : \bm{G}(\bm{z},\bm{x}) = \bm{0}\}$.
Since the preimage $\bm{G}^{-1}(\bm{0})$ can be a complicated set, we resort to the preimage theorem, a variation of the implicit function theorem also known as the regular level set theorem, see e.g., \cite[Theorem 9.9]{T11}, which states two results.
First, the preimage has codimension equal to the dimension of the image, and second, the tangent space at a point of the preimage coincides with the kernel of the Jacobian at that point, provided that the Jacobian is of full-rank.

We proceed by providing a specification of the preimage theorem in our setting.
The first part of the preimage theorem states that $\bm{G}^{-1}(\bm{0})$ is a $d$-dimensional space.
Note that in the univariate ($d=1$) setting, $\bm{G}^{-1}(0)$ would be a curve embedded in $\rr\times \rr$, and the tangent space would be a line.
In our multivariate ($d>1$) setting, $\bm{G}^{-1}(\bm{0})$ is a $d$-dimensional manifold embedded in $\rr^d\times \rr^d$, and the tangent space is again $d$-dimensional.
The second part concerns tangent spaces, defined as follows: for any $(\bm{z},\bm{x}) \in \bm{G}^{-1}(\bm{0})$, the tangent space $T_{\bm{z},\bm{x}}(\bm{G}^{-1}(\bm{0}))$ consists of the set of vectors $(\bm{q},\bm{r})\in \rr_+^d\times\rr_+^d$ for which there exists a curve $\gamma \subseteq \bm{G}^{-1}(\bm{0})$ with $\gamma(0)=(\bm{z},\bm{x})$ and $\gamma'(0)=(\bm{q},\bm{r})$.
The second part of the preimage theorem then states
\begin{align} \label{eq: kernel is tangent space preimage thm}
    \Ker\big(\bm{J}_{\bm{G}}(\bm{z},\bm{x})\big) = T_{\bm{z},\bm{x}}(\bm{G}^{-1}(\bm{0})),
\end{align}
with $\bm{J}_{\bm{G}}(\bm{z},\bm{x}) \in \rr^{d\times 2d}$ denoting the full Jacobian of $\bm{G}$ evaluated at $(\bm{z},\bm{x})$.
We compute the $d\times d$-dimensional Jacobian matrices of partial derivatives of $\bm{G}$ w.r.t.~$\bm{z}$ and $\bm{x}$ separately by
\begin{align*}
    \bm{J}_{\bm{G},\bm{z}}
    := \left[
        \frac{\partial G_j}{\partial z_k}(\bm{z},\bm{x}) 
    \right]_{j,k\in[d]}
    = \left[ \bm{1}_{\{j=k\}}\ee\Big[\exp\Big(\sum_{m=1}^d B_{mj}c_{mj}(x_m - 1)\Big)\Big]
    \right]_{j,k\in[d]},
\end{align*}
and
\begin{align*}
    \bm{J}_{\bm{G},\bm{x}} 
    := \left[
        \frac{\partial G_j}{\partial x_k}(\bm{z},\bm{x}) 
    \right]_{j,k\in[d]}
    = \left[ z_j \ee\Big[B_{kj}c_{kj}\exp\Big(\sum_{m=1}^d B_{mj}c_{mj}(x_m - 1)\Big)\Big] - \bm{1}_{\{j=k\}}\right]_{j,k\in[d]},
\end{align*}
such that $\bm{J}_{\bm{G}} = ( \bm{J}_{\bm{G},\bm{z}} \, | \, \bm{J}_{\bm{G},\bm{x}})$.
To utilize Eqn.~\eqref{eq: kernel is tangent space preimage thm}, we need to look for vectors $(\bm{q},\bm{r}) \in \rr_+^d \times \rr_+^d$ such that $\bm{J}_{\bm{G}} \cdot (\bm{q},\bm{r}) = \bm{J}_{\bm{G},\bm{z}} \cdot \bm{q} + \bm{J}_{\bm{G},\bm{x}} \cdot \bm{r} =\bm{0}$, where we denote $\bm{J}_{\bm{G}}  \equiv \bm{J}_{\bm{G}}(\bm{z},\bm{x})$ for brevity.
Note that $\bm{J}_{\bm{G},\bm{z}}$ is a diagonal matrix with positive entries, such that $\bm{J}_{\bm{G},\bm{z}} \cdot \bm{q} = \bm{0}$ only if $\bm{q} = \bm{0}$; so we can focus on $\bm{r}$.
Observe that the set of points where the determinant of the Jacobian $\bm{J}_{\bm{G},\bm{x}}$ has the same sign is a connected set, due to strict convexity in each entry of the functions $G_j(\bm{z},\bm{x})$, with $j\in[d]$, and by continuity of the determinant and partial derivatives.
For a fixed vector $\bm{r}\in\rr_+^d$, we can establish systems of equations for $\bm{z}$ and $\bm{x}$ such that $\bm{J}_{\bm{G},\bm{x}} \cdot \bm{r} = \bm{0}$.
As will become clear later in the proof, convexity will play a crucial role in determining uniqueness of these solutions.

With the objective of substantiating the claim in~\eqref{eq: kernel is tangent space preimage thm}, we compute the solution to the systems of equations $\bm{G}(\bm{z},\bm{x}) = \bm{0}$ and $\bm{J}_{\bm{G},\bm{x}} \cdot \bm{r} = \bm{0}$ by using the expression for $\bm{J}_{\bm{G},\bm{x}}$ and solving for $\bm{z}$ and $\bm{x}$.
This yields the solutions $\hat{\bm{z}}= (\hat{z}_1,\dots,\hat{z}_d)$ and $\bm{\hat{x}} = (\hat{x}_1,\dots\hat{x}_d)$ given in Eqns.~\eqref{eq: zstar solution multivariate} and \eqref{eq: system eqn xstar multivariate}, with $(\hat{\bm{z}},\hat{\bm{x}}) \equiv (\hat{\bm{z}}_{\bm{r}},\hat{\bm{x}}_{\bm{r}})$ parameterized by vectors $\bm{r}\in \rr_+^d$, such that $\bm{G}(\hat{\bm{z}},\hat{\bm{x}}) = \bm{0}$ and $\bm{J}_{\bm{G},\bm{x}}(\hat{\bm{z}},\hat{\bm{x}}) \cdot \bm{r} = \bm{0}$.
Moreover, for any given $\bm{r}\in \rr_+^d$, we show that the associated pair $(\hat{\bm{z}}_{\bm{r}},\hat{\bm{x}}_{\bm{r}})$ is unique.
The condition $\bm{J}_{\bm{G},\bm{x}}(\hat{\bm{z}}_{\bm{r}},\hat{\bm{x}}_{\bm{r}})\cdot \bm{r} = \bm{0}$ stated for each row yields $\nabla_{\bm{x}} G_j(\hat{\bm{z}}_{\bm{r}},\hat{\bm{x}}_{\bm{r}}) \cdot \bm{r} = 0$ for all $j\in[d]$, with $\nabla_x G_j(\cdot)$ the $j$-th row of the Jacobian.
Note that each $G_j(\cdot)$ only depends on $z_j$ and $\bm{x}$.
Due to strict convexity of $G_j(\bm{z},\bm{x})$ in each entry, we have that the sub-level set $G_j^{-1}(\leqslant 0) :=\{(\bm{z},\bm{x}) : G_j(\bm{z},\bm{x}) \leqslant 0\}$ is a strictly convex set, and the level set $G_j^{-1}(0)$ is the boundary of $G_j^{-1}(\leqslant 0)$.
This implies that 
\begin{align*}
    \bm{G}^{-1}(\bm{0}) = \{(\bm{z},\bm{x}): G_j(\bm{z},\bm{x}) = 0, \forall j\in[d]\} = \bigcap_{j=1}^d G_j^{-1}(0),
\end{align*}
is the boundary of a strictly convex set, namely $\bm{G}^{-1}(\leqslant \bm{0})$, as the latter is the intersection of strictly convex sets.
Since $\bm{J}_{\bm{G},\bm{x}}(\hat{\bm{z}}_{\bm{r}},\hat{\bm{x}}_{\bm{r}})\cdot \bm{r} = \bm{0}$ means $\bm{r} \in T_{\hat{\bm{z}}_{\bm{r}},\hat{\bm{x}}_{\bm{r}}}(\bm{G}^{-1}(\bm{0}))$ by Eqn.~\eqref{eq: kernel is tangent space preimage thm}, and since $\bm{G}^{-1}(\bm{0})$ is the boundary of a strictly convex set, we have that $\bm{r}$ uniquely determines the point $(\hat{\bm{z}}_{\bm{r}},\hat{\bm{x}}_{\bm{r}})$. 

The next step amounts to relating what we found so far to the domain ${\mathscr D}_f$.
%{\color{red} Recall that $\bm{G}^{-1}(\bm{0})$ is $d$-dimensional since $\bm{J}_{\bm{G},\bm{x}}$ is full-rank and that it is the boundary of a strictly convex set.}
A given value of $\bm{z} \in \rr_+^d$ determines whether one can find $\bm{x}\in\rr_+^d$ for which $\bm{G}(\bm{z},\bm{x}) = \bm{0}$, such that $(\bm{z},\bm{x})\in \bm{G}^{-1}(\bm{0})$.
Observe that the set $R_z := \{\bm{z}\in\rr_+^d : \bm{z} = \hat{\bm{z}}_{\bm{r}}, \bm{r}\in\rr_+^d\}$ divides the positive quadrant $\rr_+^d$ into two disjoint sets.
The first set is the inner (convex) region, defined as the set of $\bm{z}\in\rr_+^d$ enclosed by the origin, the $\bm{z}$ axes and the set $R_z$, with $R_z$ included; denote this set by $\mathcal{Z}$.
The second set is the outer region, denoted by $\mathcal{Z}^c$, and it is the complement of $\mathcal{Z}$, such that $\mathcal{Z} \cup \mathcal{Z}^c = \rr_+^d$.
Note that when $\bm{z}\in \mathcal{Z}^c$, then $\bm{G}(\bm{z},\bm{x}) \neq \bm{0}$ for any $\bm{x}\in\rr_+^d$, since $\mathcal{Z}^c \times \rr_+^d$ does not intersect $\bm{G}^{-1}(\bm{0})$.
This yields $\bm{G}^{-1}(\bm{0}) = \{ (\bm{z},\bm{x}) : \bm{z} \in \mathcal{Z}, \bm{G}(\bm{z},\bm{x}) = \bm{0} \}$, and using that $\bm{G}(\bm{z},\bm{f}(\bm{z})) = \bm{0}$ for all $\bm{z}\in {\mathscr D}_f$, we find ${\mathscr D}_f \subseteq \mathcal{Z}$, which proves \textit{(ii)}.
However, for $\bm{z}\in \mathcal{Z}$, we may have multiple $\bm{x}\in\rr_+^d$ such that $\bm{G}(\bm{z},\bm{x}) = \bm{0}$, so we investigate this further.

\textit{--- Proof of (iii)}.
We are left with proving uniqueness of $\bm{f}(\cdot)$.
We prove this by considering points $(\bm{z},\bm{x})\in \bm{G}^{-1}(\bm{0})$ and relating them to $\bm{f}(\cdot)$.
%From the preimage theorem, we know that $\bm{G}^{-1}(\bm{0})$ is $d$-dimensional, so we need only $d$ parameters to describe this set, for which we can use the implicit function theorem.
From the preimage theorem, we know that $\bm{G}^{-1}(\bm{0})$ is $d$-dimensional, so we need only $d$ parameters to describe this set. 
We can use the implicit function theorem to describe the $\bm{x}$ coordinate of $(\bm{z},\bm{x}) \in \bm{G}^{-1}(\bm{0})$ in terms of an implicit function of $\bm{z}$.
We consider a particular point in this set and then show how the argument extends to other points.

Consider the point $(\bm{z},\bm{x}) = (\bm{1},\bm{1}) \in \bm{G}^{-1}(\bm{0})$ since it satisfies $\bm{G}(\bm{1},\bm{1}) = \bm{0}$, where we use Assumption~\ref{ass: mgfs B exist} to ensure existence of the moment generating functions of $\bm{B}_j$ around this point.
Evaluated at the point $(\bm{1},\bm{1})$, the Jacobian of $\bm{G}$ with respect to $\bm{x}$ is given by
\begin{align}
    \bm{J}_{\bm{G},\bm{x}}(\bm{1},\bm{1})
    = \bm{H}^\top - \bm{I},
\end{align}
which is invertible due to Assumption~\ref{ass: stability condition}.
Then by the implicit function theorem, there exist open sets $V,W\subseteq \rr_+^d$ both containing $\bm{1}$, and a unique continuously differentiable function $\tilde{\bm{f}}:V\to W$ such that $\tilde{\bm{f}}(\bm{1}) = \bm{1}$ and $\bm{G}(\bm{z},\tilde{\bm{f}}(\bm{z})) = \bm{0}$ for all $\bm{z}\in V$.
Note that this implies $V \subseteq \mathcal{Z}$ and that $\tilde{\bm{f}}(\cdot)$ satisfies the fixed-point equation in~\eqref{eq: fixed point equation pgf of S_j (steady state)}.
Moreover, since $\tilde{\bm{f}}(\cdot)$ is unique, we have $\tilde{\bm{f}}(\cdot) = \bm{f}(\cdot)$ on $V$ and $V\subseteq {\mathscr D}_f^\circ$, provided that $\tilde{\bm{f}}(\cdot)$ is increasing in all entries, as by definition $\bm{f}(\cdot)$ is increasing in all entries.

The point $(\bm{1},\bm{1})$ is not particularly special; if we take another point $(\bm{z}_0,\bm{x}_0) \in \bm{G}^{-1}(\bm{0})$, we find that the Jacobian $\bm{J}_{\bm{G},\bm{x}}(\bm{z}_0,\bm{x}_0)$ is invertible provided $(\bm{z}_0,\bm{x}_0) \neq (\hat{\bm{z}},\hat{\bm{x}})$.
%, we find that the Jacobian $\bm{J}_{\bm{G},\bm{x}}(\bm{z}_0,\bm{x}_0)$ is full-rank provided $(\bm{z}_0,\bm{x}_0) \neq (\hat{\bm{z}},\hat{\bm{x}})$, invoking Assumption~\ref{ass: mgfs B exist} when entries are larger than $1$.
We can then apply the implicit function theorem to obtain open sets $\bm{z}_0 \in V_0\subseteq \rr_+^d$, $\bm{x}_0\in W_0\subseteq \rr_+^d$ and a unique map $\tilde{\bm{f}}_0:V_0 \to W_0$ that satisfies $\bm{G}(\bm{z},\tilde{\bm{f}}_0(\bm{z})) = \bm{0}$ for all $\bm{z} \in V_0$, again with $V_0 \subseteq \mathcal{Z}$.
As before, we obtain $\tilde{\bm{f}}(\cdot) = \bm{f}(\cdot)$ on $V_0$ and $V_0 \subseteq {\mathscr D}_f^\circ$, due to uniqueness of $\tilde{\bm{f}}_0$, provided $\tilde{\bm{f}}_0(\cdot)$ is increasing in all entries.
Since we can do this for arbitrary points, we obtain uniqueness of $\bm{f}(\cdot)$ on all of $\mathcal{Z}$, such that $\mathcal{Z} \subseteq {\mathscr D}_f^\circ$.
Finally, for any pair of solutions $(\hat{\bm{z}},\hat{\bm{x}})$ to Eqns.~\eqref{eq: zstar solution multivariate} and~\eqref{eq: system eqn xstar multivariate}, we have by monotonicity of $\bm{f}(\cdot)$ that $\lim_{\bm{z}\nearrow \hat{\bm{z}}} \bm{f}(\bm{z}) = \hat{\bm{x}}$, which yields the characterization $ \mathcal{Z} = {\mathscr D}_f$.
\end{proof}

We remark that taking $d=1$ in Proposition~\ref{prop: characterize f(z) and D_f explicit} yields agreement with the results obtained in \cite[Theorem 3.1.1]{KZ15}, where our condition that the implicit function is increasing, is equivalent to the condition in \cite{KZ15} where they take the minimal solution of the equation $G(z,x) = 0$ for fixed $z<\hat{z}$.
Next, we focus on the limiting cumulant of $\bm{Z}(\cdot)$ given in~\eqref{eq: limit cumulant of Z}.
Note that the moment generating function of $\bm{Z}(t)$ in~\eqref{eq: compound hawkes characterization}, and hence also in \eqref{eq: limit cumulant of Z}, is expressed in terms of the moment generating functions of the random vectors $\bm{U}_1,\dots,\bm{U}_d$.
Denote $\bm{m}_{\bm{U}}(\bm{\theta}) = (m_{\bm{U}_1}(\bm{\theta}),\dots,m_{\bm{U}_d}(\bm{\theta}))^\top$ as the vector of moment generating functions of $\bm{U}_1,\dots,\bm{U}_d$.
We impose the following condition, assumed to hold throughout the paper.

\begin{assumption} \label{ass: mgfs U exist}
Assume that for any $\hat{\bm{z}}$ in~\eqref{eq: zstar solution multivariate}, there exists a vector $\hat{\bm{\theta}}\in \rr^{d^\star}$ such that
\begin{align}
    \bm{m}_{\bm{U}}(\hat{\bm{\theta}}) = \hat{\bm{z}}.
\end{align}
\end{assumption}

Define the function $\Lambda: \rr^{d^\star} \to \rr$ by
\begin{align} \label{eq: definition Lambda in terms of f}
    \Lambda(\bm{\theta}) 
    = \sum_{j=1}^d \overline{\lambda}_j \big(f_j(\bm{m}_{\bm{U}}(\bm{\theta})) - 1\big).
\end{align}
Also define the domain of convergence $ {\mathscr D}_\Lambda:= \{ \bm{\theta} \in \rr^{{d^\star}} : \Lambda(\bm{\theta}) < \infty \}$, denote its interior by ${\mathscr D}^\circ_{\Lambda}$ and denote by $\partial {\mathscr D}_\Lambda$ its boundary.
We now characterize the limiting cumulant of $\bm{Z}(\cdot)$ in~\eqref{eq: limit cumulant of Z}.

\begin{lemma}
\label{lem: limit cumulant of Z(t)}
We have $\bm{0}\in {\mathscr D}^\circ_{\Lambda}$, and for $\bm{\theta} \in \rr^{d^\star}$ such that $\bm{\theta} \leqslant \hat{\bm{\theta}},$ where $\bm{m}_{\bm{U}}(\hat{\bm{\theta}}) = \hat{\bm{z}}$ and with $\hat{\bm{z}}$ the solution to~\eqref{eq: zstar solution multivariate}, we have
\begin{align}
\label{eq: cumulant compound process in terms of clusters}
    \lim_{t\to\infty} \frac{1}{t} \log m_{\bm{Z}(t)}(\bm{\theta})
    =\Lambda(\bm{\theta}).
\end{align}
\end{lemma}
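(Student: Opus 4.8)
The plan is to combine the cluster‐process characterization of the probability generating function of $\bm{N}(t)$ in~\eqref{eq: pgf hawkes characterization} with the composite identity~\eqref{eq: compound hawkes characterization} for $m_{\bm{Z}(t)}$, and then pass to the limit $t\to\infty$ using the convergence $f_j(\bm{z},u)\to f_j(\bm{z})$ established in Proposition~\ref{prop: characterize f(z) and D_f explicit}. Concretely, substituting $z_l = m_{\bm{U}_l}(\bm{\theta})$ into~\eqref{eq: pgf hawkes characterization} and invoking~\eqref{eq: compound hawkes characterization} gives, for admissible $\bm{\theta}$,
\begin{align*}
    \frac{1}{t}\log m_{\bm{Z}(t)}(\bm{\theta})
    = \sum_{j=1}^d \overline{\lambda}_j \,\frac{1}{t}\int_0^t \big( f_j(\bm{m}_{\bm{U}}(\bm{\theta}),u) - 1 \big)\,\mathrm{d}u .
\end{align*}
Since $u\mapsto f_j(\bm{z},u)$ converges to $f_j(\bm{z})$ as $u\to\infty$ whenever $\bm{z}\in{\mathscr D}_f$, the Cesàro averages $\frac1t\int_0^t (f_j(\bm{z},u)-1)\,\mathrm{d}u$ converge to $f_j(\bm{z})-1$, and summing over $j$ yields $\Lambda(\bm{\theta})$ as defined in~\eqref{eq: definition Lambda in terms of f}. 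The admissibility condition is exactly $\bm{m}_{\bm{U}}(\bm{\theta})\in{\mathscr D}_f$; by monotonicity of $\bm{m}_{\bm{U}}(\cdot)$ in each coordinate (it is a moment generating function of a nonnegative vector) and of $\bm{f}(\cdot)$, this holds whenever $\bm{\theta}\leqslant\hat{\bm{\theta}}$ with $\bm{m}_{\bm{U}}(\hat{\bm{\theta}}) = \hat{\bm{z}}$, because then $\bm{m}_{\bm{U}}(\bm{\theta})\leqslant\hat{\bm{z}}$ and $\mathcal{Z}={\mathscr D}_f$ from Proposition~\ref{prop: characterize f(z) and D_f explicit}. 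Assumption~\ref{ass: mgfs U exist} guarantees such a $\hat{\bm{\theta}}$ exists.

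For the claim $\bm{0}\in{\mathscr D}^\circ_\Lambda$: at $\bm{\theta}=\bm{0}$ we have $\bm{m}_{\bm{U}}(\bm{0}) = \bm{1}$ and $\bm{f}(\bm{1}) = \bm{1}$, so $\Lambda(\bm{0}) = 0 < \infty$. To get that $\bm{0}$ is interior, I would argue that $\bm{1}\in{\mathscr D}_f^\circ$ — which is built into Proposition~\ref{prop: characterize f(z) and D_f explicit}, since the implicit function theorem was applied precisely at $(\bm{1},\bm{1})$ to produce an open neighborhood $V\subseteq{\mathscr D}_f^\circ$ of $\bm{1}$ — and then use continuity of $\bm{m}_{\bm{U}}(\cdot)$ at $\bm{0}$ (again valid on a neighborhood by Assumption~\ref{ass: mgfs B exist}/\ref{ass: mgfs U exist}, or directly because each $m_{\bm{U}_l}$ is finite and continuous near $\bm{0}$) to conclude that a whole neighborhood of $\bm{0}$ is mapped into $V$, hence lies in ${\mathscr D}_\Lambda$. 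Finiteness of $\Lambda$ on this neighborhood follows because $\bm{f}$ is finite on $V$.

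The main technical point, and the step I would treat most carefully, is justifying the interchange of limit and integral — i.e., that convergence of $f_j(\bm{z},u)$ to $f_j(\bm{z})$ for each fixed $u$ upgrades to convergence of the time averages and, more basically, that $\frac1t\log m_{\bm{Z}(t)}(\bm{\theta})$ equals the displayed integral expression for \emph{finite} $t$ rather than only in the limit. The identity~\eqref{eq: pgf hawkes characterization} already gives the finite-$t$ representation, so the remaining work is the elementary Cesàro statement: if $a(u)\to a$ as $u\to\infty$ then $\frac1t\int_0^t a(u)\,\mathrm{d}u\to a$, applied to $a(u) = f_j(\bm{m}_{\bm{U}}(\bm{\theta}),u)-1$, which is monotone in $u$ (increasing when $\bm{m}_{\bm{U}}(\bm{\theta})\geqslant\bm{1}$, i.e.\ $\bm{\theta}\geqslant\bm{0}$, and can be handled by the same intermediate-case bounding argument used in part (i) of the proof of Proposition~\ref{prop: characterize f(z) and D_f explicit} for mixed signs) and bounded on $[0,\infty)$ by $f_j(\bm{m}_{\bm{U}}(\bm{\theta}))-1<\infty$ on the relevant domain. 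Monotone (or dominated) convergence then closes the argument. I would also note the boundary subtlety: when $\bm{\theta} = \hat{\bm{\theta}}$ exactly, $\bm{m}_{\bm{U}}(\bm{\theta}) = \hat{\bm{z}}\in\partial{\mathscr D}_f$, and one still has $f_j(\hat{\bm{z}},u)\uparrow\hat{x}_j = f_j(\hat{\bm{z}})<\infty$ by the limit relation $\lim_{\bm{z}\nearrow\hat{\bm{z}}}\bm{f}(\bm{z}) = \hat{\bm{x}}$ recorded at the end of Proposition~\ref{prop: characterize f(z) and D_f explicit} together with monotone convergence, so the formula persists on the closed region $\bm{\theta}\leqslant\hat{\bm{\theta}}$.
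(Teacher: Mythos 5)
Your proposal is correct and follows essentially the same route as the paper's proof: both combine \eqref{eq: compound hawkes characterization} with \eqref{eq: pgf hawkes characterization} to obtain a finite-$t$ integral representation of $\tfrac{1}{t}\log m_{\bm{Z}(t)}(\bm{\theta})$ and then pass to the limit by dominated/monotone convergence using the pointwise convergence $f_j(\bm{z},u)\to f_j(\bm{z})$ together with the mixed-sign bounding trick from part~(i) of Proposition~\ref{prop: characterize f(z) and D_f explicit}; the paper does this after the rescaling $u = vt$ (yielding an integral over $[0,1]$), while you phrase it as a Cesàro average, which is an equivalent formulation. Your argument for $\bm{0}\in{\mathscr D}_\Lambda^\circ$ via $\bm{1}\in{\mathscr D}_f^\circ$ and continuity of $\bm{m}_{\bm{U}}$ also matches the paper, and the remark on the boundary case $\bm{\theta}=\hat{\bm{\theta}}$ is a welcome extra, though note the stated uniform bound $f_j(\bm{m}_{\bm{U}}(\bm{\theta}),u)-1\leqslant f_j(\bm{m}_{\bm{U}}(\bm{\theta}))-1$ is literally valid only when $\bm{m}_{\bm{U}}(\bm{\theta})\geqslant\bm{1}$, and in the mixed case one must use the split-product dominating function from Proposition~\ref{prop: characterize f(z) and D_f explicit} (as you acknowledge) rather than $f_j(\bm{m}_{\bm{U}}(\bm{\theta}))$ itself.
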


\begin{proof}

We showed that $\bm{1}\in {\mathscr D}_f^\circ$ in the proof of Proposition~\ref{prop: characterize f(z) and D_f explicit}.
We then immediately have by Assumption~\ref{ass: mgfs U exist} that the vector of moment generating functions $\bm{m}_{\bm{U}}(\cdot)$ is defined in a neighborhood of the origin.
Taking $\bm{\theta} = \bm{0}$, we have $\bm{m}_{\bm{U}}(\bm{0})=\bm{1}$, which implies $\bm{0}\in {\mathscr D}^\circ_{\Lambda}$.

We now prove that Eqn.~\eqref{eq: cumulant compound process in terms of clusters} holds.
Combining Eqns.~\eqref{eq: compound hawkes characterization} and \eqref{eq: pgf hawkes characterization}, we obtain 
\begin{align*}
    m_{\bm{Z}(t)}(\bm{\theta})  =
    \prod_{j=1}^d \exp\Big(\overline{\lambda}_j\int_0^t \big(\ee\Big[\prod_{l=1}^d m_{\bm{U}_l}(\bm{\theta})^{S_{l\leftarrow j}(u)}\Big] - 1\big)  \,\mathrm{d}u\Big),
\end{align*}
or, equivalently, 
\begin{align}
\label{eq: proof step: cumulant compound process in terms of clusters}
    \frac{1}{t} \log m_{\bm{Z}(t)}(\bm{\theta}) = \sum_{j=1}^d \overline{\lambda}_j \int_0^1 \big( \ee\Big[\prod_{l=1}^d m_{\bm{U}_l}(\bm{\theta})^{S_{l\leftarrow j}(vt)}\Big] - 1\big)  \,\mathrm{d}v,
\end{align}
performing an elementary change of variables.
We want to take limits $t\to\infty$ in Eqn.~\eqref{eq: proof step: cumulant compound process in terms of clusters}; to that end we first focus on the expectation in the integrand.
By mimicking Part~$(i)$ of the proof of Proposition~\ref{prop: characterize f(z) and D_f explicit}, %in combination with the convexity of $m_{\bm{U}_l}(\bm{\theta})$ in each entry, 
we can apply the monotone convergence theorem on the integrand to find
\begin{align*}
    \lim_{t\to\infty}  \ee\Big[\prod_{l=1}^d m_{\bm{U}_l}(\bm{\theta})^{S_{l\leftarrow j}(vt)}\Big]
    = \ee\Big[\prod_{l=1}^d m_{\bm{U}_l}(\bm{\theta})^{S_{l\leftarrow j}}\Big],
\end{align*}
provided $\bm{\theta} \in {\mathscr D}_\Lambda$.
Now since we took $\bm{\theta} \in {\mathscr D}_\Lambda$, we have by Assumption~\ref{ass: mgfs U exist} that there exists $\hat{\bm{\theta}} \in \rr^{d^\star}$ such that $\bm{\theta} \leqslant \hat{\bm{\theta}}$ and $\bm{m}_{\bm{U}}(\hat{\bm{\theta}}) = \hat{\bm{z}}$.
Using again a similar argument as in Part~$(i)$ in the proof of Proposition~\ref{prop: characterize f(z) and D_f explicit}, distinguishing between indices $k,n\in[d]$ for which $m_{\bm{U}_k}(\bm{\theta}) \leqslant 1$ and $m_{\bm{U}_n}(\bm{\theta})>1$, 
%in the following bound, so as to obtain
%%Now since we took $\bm{\theta} \in {\mathscr D}_\Lambda$, we have by Assumption~\ref{ass: mgfs U exist} that there exists $\hat{\bm{\theta}} \in \rr^{d^\star}$ such that $\bm{\theta} \leqslant \hat{\bm{\theta}}$ and $\bm{m}_{\bm{U}}(\hat{\bm{\theta}}) = \hat{\bm{z}}$, and so
%\begin{align*}
%    \ee\Big[\prod_{l=1}^d m_{\bm{U}_l}(\bm{\theta})^{S_{l\leftarrow j}(vt)}\Big]
%    &\leqslant \ee\Big[\prod_{l=1}^d m_{\bm{U}_l}(\hat{\bm{\theta}})^{S_{l\leftarrow j}(vt)}\Big]
%    \leqslant \ee\Big[\prod_{l=1}^d m_{\bm{U}_l}(\hat{\bm{\theta}})^{S_{l\leftarrow j}}\Big] < \infty,
%\end{align*}
%since $f_j(\hat{\bm{z}}) <\infty$.
%Hence, 
we can apply the dominated convergence theorem to obtain
\begin{align*}
    &\lim_{t\to\infty} \sum_{j=1}^d \overline{\lambda}_j \int_0^1 \big( \ee\Big[\prod_{l=1}^d m_{\bm{U}_l}(\bm{\theta})^{S_{l\leftarrow j}(vt)}\Big] - 1\big)  \,\mathrm{d}v \\
    &=\sum_{j=1}^d \overline{\lambda}_j \int_0^1 \lim_{t\to\infty}\big( \ee\Big[\prod_{l=1}^d m_{\bm{U}_l}(\bm{\theta})^{S_{l\leftarrow j}(vt)}\Big] - 1\big)  \,\mathrm{d}v \\
    &= \sum_{j=1}^d \overline{\lambda}_j \big(f_j(\bm{m}_{\bm{U}}(\bm{\theta})) - 1\big),
\end{align*}
which proves Eqn.~\eqref{eq: cumulant compound process in terms of clusters}.
\end{proof}

%This result can be seen as the multivariate generalization of \cite[Theorem 3.1.1]{KZ15}, which considers random marks, combined with the approach of \cite{ST10}, which has constant marks but considers the compound process.

\section{Large Deviations}\label{sec:LargeDevs}

In this section, we show that the multivariate compound Hawkes process satisfies a large deviations principle (LDP).
The proof proceeds by establishing the required conditions on the limiting cumulant $\Lambda(\bm{\theta})$---\textit{essential smoothness}, most notably---such that the Gärtner-Ellis theorem (see e.g., \cite[Theorem 2.3.6]{DZ98}) can be invoked.
This section also briefly covers the special case of the LDP of a single component of a multivariate compound Hawkes process.

First recall the definition of an LDP for $\rr^d$-valued random vectors; see \cite[Section 1.2]{DZ98} for more background.
Let $\cB(\rr^d)$ be the Borel $\sigma$-field on $\rr^d$.
Consider a family of random vectors $\{\bm{X}_\epsilon\}_{\epsilon\in\mathbb{R}_{+}}$ taking values in $(\rr^d,  \cB(\rr^d))$.
We say that $\{\bm{X}_\epsilon\}_{\epsilon\in\mathbb{R}_{+}}$ satisfies the LDP with rate function $I(\cdot)$ if $I:\rr^d \to [0,\infty]$ is a lower semicontinuous mapping, and if for every Borel set $A\in\cB(\rr^d)$,
\begin{align}
\label{eq: general def LDP}
    -\inf_{\bm{x}\in A^\circ} I(\bm{x}) \leqslant \liminf_{\epsilon \to 0} \epsilon \log  \pp(\bm{X}_\epsilon\in A) \leqslant \limsup_{\epsilon \to 0} \epsilon \log \pp(\bm{X}_\epsilon \in A) \leqslant - \inf_{\bm{x}\in \overline{A}} I(\bm{x}),
\end{align}
where $A^\circ$ and $\overline{A}$ denote the interior and closure of $A$.
Also, recall that $I(\cdot)$ is lower semicontinuous if, for all $\alpha \geqslant 0$, the level sets $\{\bm{x} \in \rr^d \, : \, I(\bm{x}) \leqslant \alpha\}$ are closed; we call $I(\cdot)$ a \textit{good} rate function if the level sets are compact.

\subsection{LDP of multivariate compound Hawkes processes}\label{ldpM}
In this subsection, we establish that the process $(\bm{Z}(t)/t)_{t\in\rr_+}$ satisfies an LDP on $(\rr^{{d^\star}}, \cB(\rr^{{d^\star}}))$, as stated in the following theorem.

A distinguishing feature of our proof is that, due to the fact that the distribution of $\bm{S}_j$ is not explicitly known, we prove \textit{steepness}---a key step in proving essential smoothness---{\it implicitly}, i.e., through the fixed-point representation~\eqref{eq: fixed point equation pgf of S_j (steady state)} that the probability generating functions $f_j(\cdot)$ satisfy.
In particular, we cannot mimic the proof that was developed in \cite{ST10} for the univariate case, as that proof heavily rests on explicit expressions for the univariate cluster size distribution.

Our steepness proof, as given below, may be somewhat obscured by the involved notation and complex objects needed due to the fact that we work in a multivariate setting. 
To remedy this, we have also included in Appendix~\ref{appendix: steepness} a separate proof for the univariate setting that is based on the same reasoning as the one below, but is considerably more transparent. 

\begin{theorem}\label{thm:LD}
The process $(\bm{Z}(t)/t)_{t\in\rr_+}$ satisfies on $(\rr^{{d^\star}}, \cB(\rr^{{d^\star}}))$ the LDP with good rate function
\begin{align}
    \Lambda^*(\bm{x}) = \sup_{\bm{\theta}\in\rr^{{d^\star}}}
    (\bm{\theta}^\top\bm{x} - \Lambda(\bm{\theta})).
\end{align}
\end{theorem}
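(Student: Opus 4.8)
The plan is to apply the Gärtner--Ellis theorem (\cite[Theorem 2.3.6]{DZ98}) to the family $(\bm{Z}(t)/t)_{t\in\rr_+}$, for which the relevant scaling parameter is $\epsilon = 1/t$. By Lemma~\ref{lem: limit cumulant of Z(t)}, the limiting logarithmic moment generating function exists and equals $\Lambda(\bm{\theta})$ for $\bm{\theta}\leqslant\hat{\bm{\theta}}$, and $\bm{0}\in{\mathscr D}_\Lambda^\circ$. To invoke Gärtner--Ellis we must verify that $\Lambda(\cdot)$ is \emph{essentially smooth} and lower semicontinuous: namely that (a) ${\mathscr D}_\Lambda^\circ$ is nonempty, (b) $\Lambda$ is differentiable throughout ${\mathscr D}_\Lambda^\circ$, (c) $\Lambda$ is \emph{steep}, i.e.\ $\|\nabla\Lambda(\bm{\theta}_n)\|\to\infty$ whenever $\bm{\theta}_n\to\bm{\theta}\in\partial{\mathscr D}_\Lambda$, and (d) $\Lambda$ is lower semicontinuous. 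Once essential smoothness is in hand, the theorem delivers the LDP with good rate function $\Lambda^*(\bm{x}) = \sup_{\bm{\theta}\in\rr^{d^\star}}(\bm{\theta}^\top\bm{x}-\Lambda(\bm{\theta}))$, which is precisely the claimed statement.

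The routine parts are (a), (b), and (d). For (a), $\bm{0}\in{\mathscr D}_\Lambda^\circ$ was already established. For (b), note that $\Lambda(\bm{\theta}) = \sum_j \overline{\lambda}_j(f_j(\bm{m}_{\bm{U}}(\bm{\theta}))-1)$ is a composition: $\bm{m}_{\bm{U}}(\cdot)$ is smooth on its domain (a neighborhood of the origin, by Assumption~\ref{ass: mgfs U exist}) as a moment generating function, and $\bm{f}(\cdot)$ is continuously differentiable on ${\mathscr D}_f^\circ = \mathcal{Z}^\circ$ by the implicit function theorem argument used in the proof of Proposition~\ref{prop: characterize f(z) and D_f explicit} (the Jacobian $\bm{J}_{\bm{G},\bm{x}}$ is invertible on the interior, failing only at the boundary points $(\hat{\bm{z}},\hat{\bm{x}})$). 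For (d), $\Lambda$ is convex (as a limit of normalized cumulants, which are convex in $\bm{\theta}$) and finite on the open set ${\mathscr D}_\Lambda^\circ$, hence continuous there; lower semicontinuity on all of $\rr^{d^\star}$ follows since $\Lambda\equiv+\infty$ outside ${\mathscr D}_\Lambda$ and one checks the boundary behavior via monotone convergence along with convexity. I would also record the convexity of $\Lambda$ explicitly, since Gärtner--Ellis needs it and it is immediate from Hölder applied to $m_{\bm{Z}(t)}$.

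The main obstacle—and the technical heart of the paper—is step (c), steepness. The difficulty is exactly the one flagged in the introduction: the boundary $\partial{\mathscr D}_f$ (equivalently, the surface $R_z$ of points $\hat{\bm{z}}_{\bm{r}}$) and the function $\bm{f}$ are known only \emph{implicitly}, through the fixed-point system \eqref{eq: fixed point equation pgf of S_j (steady state)} and the equations \eqref{eq: zstar solution multivariate}--\eqref{eq: system eqn xstar multivariate}, so there is no closed-form cluster-size distribution to differentiate as in \cite{ST10}. The strategy I would pursue is to show that as $\bm{z}\nearrow\hat{\bm{z}}$, the Jacobian $\bm{J}_{\bm{f}}(\bm{z}) = (\bm{I}-\tilde{\bm{B}}(\bm{z})^\top)^{-1}\,\mathrm{diag}(\bm{f}(\bm{z})/\bm{z})$ blows up, because the matrix $\bm{I}-\tilde{\bm{B}}(\bm{z})^\top$ becomes singular precisely at $\hat{\bm{z}}$ — this is the content of the condition $\bm{J}_{\bm{G},\bm{x}}(\hat{\bm{z}},\hat{\bm{x}})\cdot\bm{r}=\bm{0}$, which says $\bm{r}\in\Ker(\bm{I}-\tilde{\bm{B}}(\hat{\bm{z}})^\top)$. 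By a Perron--Frobenius type argument on the nonnegative matrix $\tilde{\bm{B}}(\bm{z})$, its spectral radius increases to $1$ as $\bm{z}\nearrow\hat{\bm{z}}$, the corresponding resolvent entries diverge, and—crucially—the divergence occurs in the direction $\bm{r}>\bm{0}$, so that $\nabla\Lambda(\bm{\theta}) = \sum_j\overline{\lambda}_j\,\bm{J}_{\bm{f},j}(\bm{m}_{\bm{U}}(\bm{\theta}))^\top\nabla m_{\bm{U}}(\bm{\theta})$ has norm tending to $\infty$ as $\bm{\theta}\to\hat{\bm{\theta}}$ (using that $\nabla\bm{m}_{\bm{U}}$ stays bounded away from a degenerate configuration because the $\bm{U}_j$ are non-degenerate). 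One must also handle approach to $\partial{\mathscr D}_\Lambda$ along directions where $\bm{m}_{\bm{U}}(\bm{\theta})$ itself hits the boundary of its own domain; there steepness is inherited from steepness of the moment generating functions $m_{\bm{U}_j}$ on their domains. Because this multivariate Perron--Frobenius/resolvent analysis is genuinely involved, I would present it carefully in the main text and, as the authors announce, relegate a cleaner one-dimensional version to Appendix~\ref{appendix: steepness} for intuition. With steepness established, Gärtner--Ellis applies and the proof concludes.
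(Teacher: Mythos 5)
Your proposal follows the paper's proof in structure and in its essential idea: invoke Gärtner--Ellis via Lemma~\ref{lem: limit cumulant of Z(t)}, with steepness established by showing that $\bm{I}-\hat{\bm{B}}(\bm{z})^\top$ becomes singular as $\bm{z}\nearrow\hat{\bm{z}}$, which makes $\bm{J}_{\bm{f}}(\bm{z})=(\bm{I}-\hat{\bm{B}}(\bm{z})^\top)^{-1}\mathrm{diag}(\bm{f}(\bm{z})/\bm{z})$ blow up in the direction $\bm{r}>\bm{0}$, and then passing this divergence to $\nabla\Lambda$ via Fatou. The one place you differ is that you would derive the singularity from a Perron--Frobenius/resolvent argument (spectral radius of $\tilde{\bm{B}}(\bm{z})$ climbing to $1$), whereas the paper reads the kernel relation $\hat{\bm{B}}(\hat{\bm{z}})^\top\bm{r}=\bm{r}$ directly off the boundary characterization \eqref{eq: zstar solution multivariate}--\eqref{eq: system eqn xstar multivariate} of Proposition~\ref{prop: characterize f(z) and D_f explicit}, avoiding any spectral analysis; and the separate case you flag, where $\bm{m}_{\bm{U}}(\bm{\theta})$ hits its own domain boundary first, is precisely what Assumption~\ref{ass: mgfs U exist} rules out, so the paper does not treat it as a second case.
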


\begin{proof}
The proof relies on an application of the Gärtner-Ellis theorem, for which we need to show that the limiting cumulant $\Lambda(\bm{\theta})$ is an essentially smooth, lower semicontinuous function.
For essential smoothness, we need to show that ${\mathscr D}_\Lambda^\circ$ is non-empty and that $\bm{0}\in {\mathscr D}_\Lambda^\circ$, that $\Lambda(\cdot)$ is differentiable on ${\mathscr D}_\Lambda^\circ$, and finally that $\Lambda(\cdot)$ is steep; see \cite[Section 2.3]{DZ98} for further details.

Lemma~\ref{lem: limit cumulant of Z(t)} shows that ${\mathscr D}^\circ_\Lambda$ is non-empty and $\bm{0}\in {\mathscr D}_\Lambda^\circ$.
To show that $\Lambda(\cdot)$ is differentiable on ${\mathscr D}_\Lambda^\circ$, recall from the proof of Proposition~\ref{prop: characterize f(z) and D_f explicit} that $\bm{f}(\cdot)$ is continuously differentiable on ${\mathscr D}_{f}^\circ$, exploiting Assumptions~\ref{ass: stability condition} and~\ref{ass: mgfs B exist}.
Using this property, in combination with the fact that the moment generating functions $m_{\bm{U}_l}(\bm{\theta})$ are differentiable for $\bm{\theta} \in {\mathscr D}_\Lambda^\circ$ by invoking Assumption~\ref{ass: mgfs U exist}, we conclude differentiability of $\Lambda(\cdot)$ on ${\mathscr D}_\Lambda^\circ$.

Next, we prove that $\Lambda(\cdot)$ is steep, i.e., for any $\bm{\bar{\theta}}\in \partial {\mathscr D}_\Lambda^\circ$ and a sequence $\bm{\theta}_n \nearrow \bm{\bar{\theta}}$ as $n\to\infty$, we have that $\lim_{n\to\infty} \norm{\nabla \Lambda(\bm{\theta}_n)}_{\rr^{{d^\star}}} = \infty$.
For any $i\in[{d^\star}]$, we first observe that
\begin{align} \label{eq: partial derivative multivariate limit cumulant}
\begin{split}
    \frac{\partial}{\partial \theta_i} \Lambda(\bm{\theta})
    &= \sum_{j=1}^d \overline{\lambda}_j  \frac{\partial}{\partial \theta_i} f_j(\bm{m}_{\bm{U}}(\bm{\theta}))\\
    &= \sum_{j=1}^d \overline{\lambda}_j \sum_{k=1}^d   \Big(\frac{\partial}{\partial \theta_i} m_{\bm{U}_k}(\bm{\theta})\Big) \ee\Big[ S_{k\leftarrow j} \prod_{l=1}^d m_{\bm{U}_l}(\bm{\theta})^{S_{l\leftarrow j} - \bm{1}_{\{k=l\}}}\Big].
\end{split}
\end{align}
This identity entails that entries of $\nabla\Lambda(\cdot)$ are given in terms of the partial derivatives of the probability generating function of $\bm{S}_j$, for all $j\in[d]$. 

To establish steepness of $\Lambda(\cdot)$, it suffices to show that the partial derivatives of $f_j(\cdot)$ diverge on the boundary of ${\mathscr D}_\Lambda^\circ$.
Recall that the input for the probability generating function ${\bs f}(\cdot)$ in Eqn.~\eqref{eq: definition Lambda in terms of f} is the vector $\bm{m}_{\bm{U}}(\bm{\theta})\in\rr_+^d$.
In the remainder of the proof, we first derive steepness of ${\bs f}(\cdot)$ at a specific $\bm{z}\in\rr_+^d$, after which we consider the setting in which ${\bs f}(\cdot)$ is evaluated in the vector $\bm{m}_{\bm{U}}(\bm{\theta})$.

Define the matrix $\hat{\bm{B}}(\bm{z}) = (\hat{B}_{mj}(\bm{z}))_{m,j\in[d]}$ by
\begin{align}
    \hat{B}_{mj}(\bm{z}) := z_j \ee\Big[B_{mj} c_{mj} \exp\Big(\sum_{i=1}^d B_{ij}c_{ij}(f_i(\bm{z}) - 1)\Big) \Big].
\end{align}
Taking the partial derivative of the fixed-point representation~\eqref{eq: fixed point equation pgf of S_j (steady state)} with respect to $z_k$, for $k\in[d]$, yields
\begin{align} \label{eq: multivariate partial derivative random marks}
\begin{split}
    \frac{\partial f_j(\bm{z})}{\partial z_k} &=
    \ee\Big[\exp\Big(\sum_{m=1}^d B_{mj}c_{mj}(f_m(\bm{z}) - 1)\Big)\Big]\ind_{\{k=j\}} + \sum_{m=1}^d \frac{\partial f_m(\bm{z})}{\partial z_k}\hat{B}_{mj}(\bm{z})
    \\&=\frac{f_j(\bm{z})}{z_j}\ind_{\{k=j\}} +\sum_{m=1}^d \frac{\partial f_m(\bm{z})}{\partial z_k}\hat{B}_{mj}(\bm{z}),
    \end{split}
\end{align}
where the second equality is due to the fixed-point representation~\eqref{eq: fixed point equation pgf of S_j (steady state)} itself.
We can write~\eqref{eq: multivariate partial derivative random marks} compactly in matrix-vector form by considering the Jacobian $\bm{J}_{\bm{f}}$ of $\bm{f}(\cdot)$, which yields
\begin{align} \label{eq: jacobian f wrt z}
    \bm{J}_{\bm{f}}(\bm{z}) = (\bm{I} - \bm{\hat{B}}(\bm{z})^\top)^{-1} \text{diag}(\bm{f}(\bm{z})/\bm{z}),
\end{align}
where the division $\bm{f}(\bm{z})/\bm{z}$ is to be understood componentwise, provided the inverse exists.
We now explore for which values of $\bm{z}$ the inverse appearing in~\eqref{eq: jacobian f wrt z} fails to exist, i.e., when the associated determinant equals 0.
Consider an element $\hat{\bm{z}}$ on the boundary of ${\mathscr D}_f$.
Recall from Eqn.~\eqref{eq: zstar solution multivariate} that this $\hat{\bm{z}}$ is parametrized by some positive vector $\bm{r} \in \rr_+^d$.
Moreover, in this point we have $\bm{f}(\hat{\bm{z}}) = \hat{\bm{x}}$, with $\hat{\bm{x}}$ the solution to~\eqref{eq: system eqn xstar multivariate}, and hence
\begin{align}
    \hat{B}_{mj}(\hat{\bm{z}}) = \hat{z}_j \ee\Big[B_{mj} c_{mj} \exp\Big(\sum_{i=1}^d B_{ij}c_{ij}(\hat{x}_i - 1)\Big) \Big].
\end{align}
Combining this with Eqns.~\eqref{eq: zstar solution multivariate} and~\eqref{eq: system eqn xstar multivariate}, we obtain
\begin{align}
    \hat{\bm{B}}(\hat{\bm{z}}) \cdot \bm{r} = \bm{r} 
    \iff (\bm{I} - \hat{\bm{B}}(\hat{\bm{z}}))\cdot \bm{r} = \bm{0},
\end{align}
implying that $\bm{r}$ is in the kernel of $\bm{I} - \hat{\bm{B}}(\hat{\bm{z}})$.
Since $\bm{r}$ is a positive (non-zero) vector, we obtain that $\bm{I} - \hat{\bm{B}}(\hat{\bm{z}})$ is not invertible and so
\begin{align}
    \det (\bm{I} - \hat{\bm{B}}(\hat{\bm{z}})^\top) = 0.
\end{align}
Then by Eqn.~\eqref{eq: jacobian f wrt z}, we find that the directional derivative into the positive quadrant diverges, i.e., for any $\bm{q}\in\rr_+^d$ and sequence $ \{\bm{z}_n\} \subseteq {\mathscr D}_f^\circ$ such that $\bm{z}_n \nearrow \hat{\bm{z}}$ , we have
\begin{align*}
    \lim_{\bm{z}_n\nearrow \hat{\bm{z}}}\norm{\bm{J}_{\bm{f}}(\bm{z}_n) \cdot \bm{q}}_{\rr^d} = \infty,
\end{align*}
since each element of $\text{diag}(\bm{f}(\hat{\bm{z}})/ \hat{\bm{z}}) =\text{diag}(\hat{\bm{x}}/ \hat{\bm{z}})$ is positive and bounded.
This proves that $\bm{f}(\cdot)$ is steep in each argument.

We now use the above observations to prove steepness of $\Lambda(\cdot)$.
By Assumption~\ref{ass: mgfs U exist}, there exists $\hat{\bm{\theta}}$ on the boundary of ${\mathscr D}_\Lambda$ such that $\bm{m}_{\bm{U}}(\hat{\bm{\theta}}) = \hat{\bm{z}}$.
With the same argument as above, we find $\det (\bm{I} - \hat{\bm{B}}(\bm{m}_{\bm{U}}(\hat{\bm{\theta}}))^\top) = 0$, such that $\bm{I} - \hat{\bm{B}}(\bm{m}_{\bm{U}}(\hat{\bm{\theta}}))^\top$ is not invertible at the boundary of ${\mathscr D}_\Lambda$.
Hence, for any positive vector $\bm{q}\in\rr^d$ and a sequence $\{\bm{\theta}_n\} \subseteq {\mathscr D}_\Lambda^\circ$ such that $\bm{\theta}_n \nearrow \hat{\bm{\theta}}$, we have
\begin{align} \label{eq: jacobian f with mgf U as input diverges}
    \liminf_{\bm{\theta}_n \nearrow \hat{\bm{\theta}}}\norm{\bm{J}_{\bm{f}}(\bm{m}_{\bm{U}}(\bm{\theta}_n)) \cdot \bm{q}}_{\rr^d} = \infty.
\end{align}
If we denote the entries of $\bm{J}_{\bm{f}}$ by $\bm{J}_{\bm{f}}^{(jk)} = \partial f_j/\partial z_k$, then from Eqn.~\eqref{eq: partial derivative multivariate limit cumulant}, we have
\begin{align}\label{steep}
    &\hspace{-5mm}\liminf_{\bm{\theta}_n \nearrow \hat{\bm{\theta}}} \norm{\nabla \Lambda(\bm{\theta}_n)}_{\rr^{{d^\star}}} 
    = \liminf_{\bm{\theta}_n \nearrow \hat{\bm{\theta}}} \Big\lVert \Big( \frac{\partial}{\partial \theta_{1}} \Lambda(\bm{\theta}_n), \dots, 
     \frac{\partial}{\partial \theta_{{d^\star}}} \Lambda(\bm{\theta}_n) \Big)\Big\rVert_{\rr^{{d^\star}}} \notag \\
    &=
    \Big\lVert \Big(\sum_{j=1}^d \overline{\lambda}_j \sum_{k=1}^d \liminf_{\bm{\theta}_n \nearrow \hat{\bm{\theta}}} \frac{\partial}{\partial \theta_{1}} m_{\bm{U}_k}(\bm{\theta}_n)  \ee\Big[ S_{k\leftarrow j} \prod_{l=1}^d m_{\bm{U}_l}(\bm{\theta}_n)^{S_{l\leftarrow j} - \bm{1}_{\{k=l\}}}\Big],  \notag \\
    &\quad\quad \dots, 
    \sum_{j=1}^d \overline{\lambda}_j \sum_{k=1}^d  \liminf_{\bm{\theta}_n \nearrow \hat{\bm{\theta}}} \frac{\partial}{\partial \theta_{{d^\star}}} m_{\bm{U}_k}(\bm{\theta}_n)
     \ee\Big[ S_{k\leftarrow j} \prod_{l=1}^d m_{\bm{U}_l}(\bm{\theta}_n)^{S_{l\leftarrow j} - \bm{1}_{\{k=l\}}}\Big] \Big) \Big\rVert_{\rr^{{d^\star}}} \notag \\
     &\geqslant
    \Big\lVert \Big(\sum_{j=1}^d \overline{\lambda}_j \sum_{k=1}^d  \frac{\partial}{\partial \theta_{1}} m_{\bm{U}_k}(\hat{\bm{\theta}})  \ee\Big[\liminf_{\bm{\theta}_n \nearrow \hat{\bm{\theta}}} S_{k\leftarrow j} \prod_{l=1}^d m_{\bm{U}_l}(\bm{\theta}_n)^{S_{l\leftarrow j} - \bm{1}_{\{k=l\}}}\Big],  \\
    &\quad\quad \dots, 
    \sum_{j=1}^d \overline{\lambda}_j \sum_{k=1}^d  \frac{\partial}{\partial \theta_{{d^\star}}} m_{\bm{U}_k}(\hat{\bm{\theta}})
     \ee\Big[ \liminf_{\bm{\theta}_n \nearrow \hat{\bm{\theta}}} S_{k\leftarrow j} \prod_{l=1}^d m_{\bm{U}_l}(\bm{\theta}_n)^{S_{l\leftarrow j} - \bm{1}_{\{k=l\}}}\Big] \Big) \Big\rVert_{\rr^{{d^\star}}} \notag \\
    &=
    \Big\lVert \Big(\sum_{j=1}^d \overline{\lambda}_j \sum_{k=1}^d \frac{\partial}{\partial \theta_{1}} m_{\bm{U}_k}(\hat{\bm{\theta}}) \bm{J}_{\bm{f}}^{{jk}}(\bm{m}_{\bm{U}}(\hat{\bm{\theta}})), \dots,
    \sum_{j=1}^d \overline{\lambda}_j \sum_{k=1}^d \frac{\partial}{\partial \theta_{{d^\star}}} m_{\bm{U}_k}(\hat{\bm{\theta}})\bm{J}_{\bm{f}}^{{jk}}(\bm{m}_{\bm{U}}(\hat{\bm{\theta}})) \Big) \Big\rVert_{\rr^{{d^\star}}}\notag  \\
    &= \infty,\notag 
\end{align}
where the inequality is an application of Fatou's lemma and the last equality is a consequence of~\eqref{eq: jacobian f with mgf U as input diverges}.

We finally prove lower semicontinuity of $\Lambda(\cdot)$.
Since we consider a metric space $\rr^{{d^\star}}$, it suffices to show lower semicontinuity through sequences.
Consider $\bm{\theta}_n \nearrow \bm{\theta} \in {\mathscr D}_\Lambda^\circ$ and observe that by Fatou's lemma, we have
\begin{align}
    \liminf_{\bm{\theta}_n \nearrow \bm{\theta}} \ee\Big[ \prod_{l=1}^d m_{\bm{U}_l}(\bm{\theta}_n)^{S_{l\leftarrow j}}\Big] 
    \geqslant \ee\Big[ \prod_{l=1}^d   \liminf_{\bm{\theta}_n \nearrow \bm{\theta}} m_{\bm{U}_l}(\bm{\theta}_n)^{S_{l\leftarrow j}}\Big],
\end{align}
for any $j\in[d]$.
Furthermore, it is easily shown that, for any integer $k\in\nn$, another application of Fatou's lemma yields
\begin{align}
    \liminf_{\bm{\theta}_n \nearrow \bm{\theta}} m_{\bm{U}_l}(\bm{\theta}_n)^k
    = \liminf_{\bm{\theta}_n \nearrow \bm{\theta}} \ee\big[\exp(\bm{\theta}_n^\top \bm{U}_l)\big]^k
    \geqslant \ee\big[ \exp(\bm{\theta}^\top \bm{U}_l )\big]^k.
\end{align}
Since the random variables $S_{l\leftarrow j}$ are non-negative, we obtain
\begin{align}
\begin{split}
    \liminf_{\bm{\theta}_n \nearrow \bm{\theta}} \Lambda(\bm{\theta}_n)
    &= \liminf_{\bm{\theta}_n \nearrow \bm{\theta}} \sum_{j=1}^d \overline{\lambda}_j \big(\ee\Big[\prod_{l=1}^d m_{\bm{U}_l}(\bm{\theta}_n)^{S_{l\leftarrow j}}\Big] - 1\big) \\
    &\geqslant \sum_{j=1}^d \overline{\lambda}_j \big(\ee\Big[ \liminf_{\bm{\theta}_n\nearrow \bm{\theta}}\prod_{l=1}^d m_{\bm{U}_l}(\bm{\theta}_n)^{S_{l\leftarrow j}}\Big] - 1\big)
    \geqslant\Lambda(\bm{\theta}).
\end{split}
\end{align}
We have now verified that the limiting cumulant $\Lambda(\cdot)$ satisfies all conditions for the Gärtner-Ellis theorem \cite[Theorem 2.3.6]{DZ98} to apply. 
This concludes the proof of the LDP.
\end{proof}

The consequence of this LDP is that, for any Borel set $A\in\cB(\rr^{{d^\star}})$, we have that the measure $\nu_t: \cB(\rr^{{d^\star}}) \to [0,1]$ defined by $\nu_t(A) := \pp(\bm{Z}(t)/t \in A)$ satisfies
\begin{align}
    -\inf_{\bm{x}\in A^\circ}\Lambda^*(\bm{x}) \leqslant \liminf_{t\to\infty} \frac{1}{t} \log \nu_t(A)\leqslant 
    \limsup_{t\to\infty} \frac{1}{t} \log \nu_t(A) 
    \leqslant
    -\inf_{\bm{x}\in\overline{A}}\Lambda^*(\bm{x}).
\end{align}

\subsection{LDP of a single component}

As a special case of interest, explicitly required later in this paper, we now provide the LDP of a single component.
Hence, throughout this subsection, fix $i\in[{d^\star}]$ and consider the $\rr$-valued component $Z_i(\cdot)$ of the multivariate compound Hawkes process, as defined in Eqn.~\eqref{eq: compound hawkes entry}. 
The associated limiting cumulant is
\begin{align} \label{eq: def marginal limiting cumulant}
    \Lambda_i(\theta) := \lim_{t\to\infty} \frac{1}{t} \log \ee[e^{\theta Z_i(t)}] = \Lambda(0,\dots,\theta,\dots,0),
\end{align}
where the input vector of $\Lambda(\cdot)$ is non-zero on the $i$-th position.
It is noted that whereas $Z_i(t)$ is a one-dimensional object, it is still driven by the multivariate Hawkes process $\bm{N}(\cdot)$.

Compared to the multivariate setting in Section~\ref{ldpM}, we now have to work with the domain ${\mathscr D}_{\Lambda_i} := \{\theta \in \rr \, : \, \Lambda_i(\theta)<\infty\}$, where the argument of the probability generating function of $\bm{S}_j$ is  given by \[\bm{m}_{\bm{U}_{(i)}}(\theta) := (m_{U_{i1}}(\theta),\dots,m_{U_{id}}(\theta))^\top,\] with $m_{U_{ij}}(\theta) = \ee[e^{\theta U_{ij}}]$.
Here, the vector $\bm{U}_{(i)} = (U_{i1},\dots,U_{id})^\top$ is the $i$-th row of the matrix $\bm{U}$, where $i\in[{d^\star}]$.
From Assumption~\ref{ass: mgfs U exist}, we find that for any $\hat{\bm{z}}$ as the solution to~\eqref{eq: zstar solution multivariate}, there exists $\hat{\theta}>0$ such that $\bm{m}_{\bm{U}_{(i)}}(\hat{\theta}) = \hat{\bm{z}}$,
and for $\theta \leqslant \hat{\theta}$, we have by Lemma~\ref{lem: limit cumulant of Z(t)},
\begin{align}
\label{eq: limiting cumulant Z_i characterization}
    \Lambda_i(\theta) =
    \sum_{j=1}^d \overline{\lambda}_j \big(\ee\Big[\prod_{l=1}^d m_{U_{il}}(\theta)^{S_{l\leftarrow j}}\Big] - 1\big).
\end{align}

\begin{corollary} \label{cor: LD marginal}
The process $(Z_i(t)/t)_{t\in\rr_+}$ satisfies on $(\rr,{\mathcal B}(\rr))$ the LDP with good rate function
\begin{align}
    \Lambda_i^*(x) = \sup_{\theta \in \rr}( \theta x - \Lambda_i(\theta)).
\end{align}
\end{corollary}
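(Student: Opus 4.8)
The plan is to obtain the corollary from the multivariate LDP of Theorem~\ref{thm:LD} by the contraction principle, together with a short convex-duality computation to recognize the resulting rate function. Let $\pi_i\colon\rr^{{d^\star}}\to\rr$ be the $i$-th coordinate projection $\pi_i(\bm{x})=x_i$; this map is linear, hence continuous, and $Z_i(t)/t=\pi_i(\bm{Z}(t)/t)$. Since $(\bm{Z}(t)/t)_{t\in\rr_+}$ satisfies the LDP on $(\rr^{{d^\star}},{\mathcal B}(\rr^{{d^\star}}))$ with good rate function $\Lambda^*$ by Theorem~\ref{thm:LD}, the contraction principle \cite[Theorem~4.2.1]{DZ98} yields that $(Z_i(t)/t)_{t\in\rr_+}$ satisfies the LDP on $(\rr,{\mathcal B}(\rr))$ with good rate function
\begin{align*}
    \widetilde{I}_i(x)=\inf_{\bm{y}\in\rr^{{d^\star}}:\,y_i=x}\Lambda^*(\bm{y}).
\end{align*}
It then remains only to show that $\widetilde{I}_i=\Lambda_i^*$.

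For this identification I would invoke the following standard facts. The function $\Lambda(\cdot)$ is convex, being the pointwise limit of the convex maps $\bm{\theta}\mapsto t^{-1}\log\ee[e^{\bm{\theta}^\top\bm{Z}(t)}]$ (convexity of each following from H\"older's inequality); it is lower semicontinuous, as established in the proof of Theorem~\ref{thm:LD}; and it is finite in a neighborhood of the origin by Lemma~\ref{lem: limit cumulant of Z(t)}. Hence $\Lambda(\cdot)$ is a proper convex lower semicontinuous function, so the Fenchel--Moreau theorem gives $\Lambda^{**}=\Lambda$. Moreover, $\widetilde{I}_i$ is convex, being the image (partial minimization) of the convex function $\Lambda^*$ under the linear map $\pi_i$; it is lower semicontinuous, being a good rate function; and it is proper, since $\widetilde{I}_i\geqslant 0$ and, at the law-of-large-numbers point $\bm{x}_0=\ee[\bm{U}](\bm{I}-\bm{H})^{-1}\overline{\bm{\lambda}}$ from~\eqref{eq: LLN for compound hawkes Z}, one has $\widetilde{I}_i(\pi_i(\bm{x}_0))\leqslant\Lambda^*(\bm{x}_0)=0$. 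In particular $\widetilde{I}_i=\widetilde{I}_i^{**}$.

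It then suffices to compute the conjugate of $\widetilde{I}_i$ and apply biconjugation: for $\theta\in\rr$,
\begin{align*}
    \widetilde{I}_i^{*}(\theta)
    &=\sup_{x\in\rr}\Big(\theta x-\inf_{\bm{y}:\,y_i=x}\Lambda^*(\bm{y})\Big)
    =\sup_{x\in\rr}\,\sup_{\bm{y}:\,y_i=x}\big(\theta y_i-\Lambda^*(\bm{y})\big)\\
    &=\sup_{\bm{y}\in\rr^{{d^\star}}}\big((\theta\bm{e}_i)^\top\bm{y}-\Lambda^*(\bm{y})\big)
    =\Lambda^{**}(\theta\bm{e}_i)=\Lambda(\theta\bm{e}_i)=\Lambda_i(\theta),
\end{align*}
the last step being the definition~\eqref{eq: def marginal limiting cumulant}. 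Hence $\widetilde{I}_i=\widetilde{I}_i^{**}=(\Lambda_i)^{*}=\Lambda_i^{*}$, which is the asserted good rate function, completing the proof. The argument is essentially routine once Theorem~\ref{thm:LD} is available; the only point requiring a little care is the bookkeeping in the conjugacy computation and the biconjugation step, everything else being a direct appeal to results already in hand. As an alternative, one may bypass the contraction principle and apply the one-dimensional G\"artner--Ellis theorem to $\Lambda_i(\theta)=\Lambda(\theta\bm{e}_i)$ directly: the restriction of $\Lambda$ to the line $\{\theta\bm{e}_i:\theta\in\rr\}$ inherits $0$ in the interior of its domain and differentiability, and steepness follows exactly as around~\eqref{eq: jacobian f with mgf U as input diverges} in the proof of Theorem~\ref{thm:LD}, upon noting that $m_{U_{ij}}'(\hat\theta)\geqslant\ee[U_{ij}]>0$ for every $j\in[d]$ since $U_{ij}$ is non-negative and non-degenerate.
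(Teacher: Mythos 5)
Your proposal is correct. The paper presents the corollary without an explicit proof, but the setup immediately preceding it (introducing $\mathscr{D}_{\Lambda_i}$, the curve $\bm{m}_{\bm{U}_{(i)}}(\theta)$, and the boundary point $\hat\theta$) makes clear that the intended argument is the one you describe only as an ``alternative'': re-run the one-dimensional G\"artner--Ellis theorem for $\Lambda_i(\theta)=\Lambda(\theta\bm e_i)$, re-verifying $0\in\mathscr D_{\Lambda_i}^\circ$, differentiability, and steepness on the restriction. Your primary route---contraction principle along $\pi_i$ followed by a Fenchel--Moreau biconjugation to identify $\inf_{\bm y:\,y_i=x}\Lambda^*(\bm y)$ with $\Lambda_i^*(x)$---is therefore a genuinely different derivation. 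What it buys is that you never need to re-establish essential smoothness in one dimension (in particular, no appeal to $m_{U_{ij}}'(\hat\theta)>0$ or to the boundary characterization of $\mathscr D_{\Lambda_i}$): the multivariate steepness already done in Theorem~\ref{thm:LD} is reused for free, and the only extra work is a routine exchange of suprema plus two applications of biconjugation, both justified since $\Lambda$ is proper, convex, and (as shown in the proof of Theorem~\ref{thm:LD}) lower semicontinuous, while $\widetilde I_i$ is proper, convex as an infimal projection, and lsc as a good rate function. Your route is also structurally more robust: it gives the LDP and the rate function identification for \emph{any} continuous linear image of $\bm Z(t)/t$, not just coordinate projections. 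The paper's route, on the other hand, produces the explicit one-dimensional domain $\mathscr D_{\Lambda_i}$ and the zero $\theta^\star$ of $\Psi_i$ directly, which are needed anyway for the ruin-probability and importance-sampling analysis in Sections~\ref{sec:RuinProbs}--\ref{sec:simulation}, so neither is strictly superior in the context of the paper. One cosmetic remark: where you invoke $\Lambda^{**}=\Lambda$, the conclusion $\widetilde I_i=\Lambda_i^*$ in fact only requires $\Lambda^{**}$ and $\Lambda$ to agree on the interior of $\mathscr D_\Lambda$ (conjugation is blind to the lsc hull), so the argument is insensitive to the precise behaviour of $\Lambda$ on $\partial\mathscr D_\Lambda$; mentioning this would make the step fully watertight even without leaning on the boundary lsc of $\Lambda$.
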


\section{Ruin Probabilities}\label{sec:RuinProbs}

In this section, we analyze a risk process in which the claims are generated by a multivariate compound Hawkes process.
In particular, using the LDP results for $Z_i(t)/t$ established in the previous section, we characterize the asymptotic behavior of the ruin probabilities of the corresponding risk process.

We assume a constant premium rate $r > 0$ per unit of time and consider, for a given $i\in[{d^\star}]$, the net cumulative claim process (or: risk process)
\begin{align}
    Y_i(t) := Z_i(t) - rt.\label{defY}
\end{align}
Our objective is to find the asymptotics of the associated ruin probability, i.e., the probability that this net cumulative process ever exceeds level $u$, for some $u > 0$.

From the LLN result for $\bm{Z}(t)/t$ given in Eqn.~\eqref{eq: LLN for compound hawkes Z}, we have that
\begin{align}
    \frac{Z_i(t)}{t} \to \ee[\bm{U}_{(i)}](\bm{I} - \bm{H})^{-1}\overline{\bm{\lambda}},
\end{align}
a.s., with $\ee[\bm{U}_{(i)}] = (\ee[U_{i1}],\dots,\ee[U_{id}])$.
To make sure that ruin is rare, we impose throughout the {\it net profit condition}:
\begin{align}
\label{eq: assumption premium rate r}
    r > \ee[\bm{U}_{(i)}](\bm{I} - \bm{H})^{-1}\overline{\bm{\lambda}},
\end{align}
such that the process $Y_i(t)$ drifts towards $-\infty$. 
For some initial capital $u>0$, the time of ruin is defined as
\begin{align*}
    \tau_u := \inf\{t > 0 \, : \, u + rt - Z_i(t) < 0 \} = \inf\{t > 0 \, : \, Y_i(t) > u \},
\end{align*}
and the associated infinite horizon ruin probability is defined as
\begin{align}
    p(u) := \pp(\tau_u < \infty).
\end{align}
We study the behavior of $p(u)$ for $u$ large. 

From Lemma~\ref{lem: limit cumulant of Z(t)}, it immediately follows that the limiting cumulant function of $Y_i(\cdot)$ satisfies
\begin{align}
    \Psi_i(\theta) := \lim_{t\to\infty} \frac{1}{t}\log \ee\big[e^{\theta Y_i(t)}\big] = \Lambda_i(\theta) -r\theta,
\end{align}
with $\Lambda_i(\cdot)$ given in Eqn.~\eqref{eq: limiting cumulant Z_i characterization}.
By \cite[Lemma 2.3.9]{DZ98}, we know that $\Lambda_i(\cdot)$ is a convex function, which implies that $\Psi_i(\cdot)$ is also convex.
We assume throughout the paper that we are in the light-tailed regime, in the sense that there exists $\theta^\star > 0$ such that 
\begin{align} \label{eq: assumption zero of limit cumulant risk process}
    \Psi_i(\theta^\star) = 0.
\end{align}
In the rest of this section, we prove that $p(u)$ decays essentially exponentially as $u$ increases, as made precise in the following theorem.

\begin{theorem}\label{thm:rp}
For fixed $i\in[{d^\star}]$, the ruin probability $p(u)$ associated to the risk process $Y_i(\cdot)$ has logarithmic decay rate $-\theta^\star$, i.e.,
\begin{align} \label{eq: bound ruin probability marginal}
    \lim_{u\to\infty} \frac{1}{u} \log p(u) = -\theta^\star,
\end{align}
where $\theta^\star$ is the unique positive solution of \eqref{eq: assumption zero of limit cumulant risk process}.
\end{theorem}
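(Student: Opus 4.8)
The plan is to establish \eqref{eq: bound ruin probability marginal} by proving the matching lower and upper bounds on $\limsup_{u\to\infty}\frac1u\log p(u)$ and $\liminf_{u\to\infty}\frac1u\log p(u)$ separately. For the \emph{lower bound} $\liminf_{u\to\infty}\frac1u\log p(u)\geqslant -\theta^\star$, I would appeal directly to the large deviations principle for $Z_i(t)/t$ from Corollary~\ref{cor: LD marginal}. Fix a level $a>r$ with $a<\Lambda_i'(\theta^\star)$ so that the slope of $\Lambda_i$ at the relevant rate matches, and observe $\{\tau_u<\infty\}\supseteq\{Y_i(u/(a-r))>u\}=\{Z_i(t_u)/t_u>a\}$ with $t_u:=u/(a-r)$. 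The LDP lower bound gives $\liminf_{u\to\infty}\frac{1}{t_u}\log\pp(Z_i(t_u)/t_u>a)\geqslant -\Lambda_i^*(a)$, hence $\liminf_{u\to\infty}\frac1u\log p(u)\geqslant -\frac{\Lambda_i^*(a)}{a-r}$. Optimizing over $a$: by convex duality and the defining equation $\Lambda_i(\theta^\star)=r\theta^\star$, one checks $\inf_{a>r}\frac{\Lambda_i^*(a)}{a-r}=\theta^\star$ (the infimum is attained at $a=\Lambda_i'(\theta^\star)$, using $\Lambda_i^*(a)=\theta^\star a-\Lambda_i(\theta^\star)=\theta^\star a - r\theta^\star$ there). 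This yields the lower bound.

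For the \emph{upper bound} $\limsup_{u\to\infty}\frac1u\log p(u)\leqslant -\theta^\star$, I would use a time-discretization and union-bound argument combined with the Chernoff bound, as the authors announce in the introduction. Write $p(u)=\pp(\exists t>0: Y_i(t)>u)$ and discretize time into a grid $t_n=n\delta$, $n\geqslant 1$. On the interval $(t_{n-1},t_n]$, since $Z_i(\cdot)$ is nondecreasing and $rt$ grows linearly, the event $\sup_{t\in(t_{n-1},t_n]}Y_i(t)>u$ is contained in $\{Z_i(t_n)>u+rt_{n-1}\}=\{Z_i(t_n)-rt_n>u-r\delta\}$. Then
\begin{align*}
p(u)\leqslant\sum_{n=1}^\infty\pp\big(Y_i(t_n)>u-r\delta\big)\leqslant\sum_{n=1}^\infty e^{-\theta(u-r\delta)}\,\ee\big[e^{\theta Y_i(t_n)}\big]
\end{align*}
for any $\theta$ in the domain of the cumulant. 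Using the finite-horizon bound $\ee[e^{\theta Y_i(t)}]\leqslant e^{t(\Psi_i(\theta)+\varepsilon_t)}$ with $\varepsilon_t\to 0$ (a consequence of Lemma~\ref{lem: limit cumulant of Z(t)}), and choosing $\theta$ slightly below $\theta^\star$ so that $\Psi_i(\theta)<0$, the tail of the series converges geometrically; splitting the sum at small $n$ (small time scales, where the one-step claim tail is itself exponentially small in $u$ by a Chernoff bound) and large $n$ (where $e^{t_n\Psi_i(\theta)}$ dominates and sums to a constant) shows $p(u)\leqslant C(\theta)\,e^{-\theta(u-r\delta)}$. Letting $\delta\downarrow 0$ and then $\theta\uparrow\theta^\star$ gives $\limsup_{u\to\infty}\frac1u\log p(u)\leqslant -\theta^\star$.

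Finally, uniqueness of $\theta^\star$ follows from convexity of $\Psi_i$ together with $\Psi_i(0)=0$, $\Psi_i'(0)=\Lambda_i'(0)-r=\ee[\bm U_{(i)}](\bm I-\bm H)^{-1}\overline{\bm\lambda}-r<0$ by the net profit condition \eqref{eq: assumption premium rate r}, so $\Psi_i$ is strictly negative just to the right of $0$ and, being strictly convex (nondegenerate marks), can return to $0$ at exactly one positive point. The main obstacle I anticipate is the upper bound: one must carefully justify the uniform-in-$u$ control of the finite-horizon cumulant $\frac1t\log\ee[e^{\theta Y_i(t)}]$ near but below $\theta^\star$ (the convergence in Lemma~\ref{lem: limit cumulant of Z(t)} is only pointwise in $t$, so some monotonicity or explicit bound from the cluster representation is needed), and to show the contributions from very small time scales $n\delta=o(u)$ are negligible — this is where the Chernoff bound on a single aggregated claim over a short interval, exploiting that $u$ is large relative to that horizon, does the work.
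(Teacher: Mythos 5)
Your lower bound is essentially the paper's, just reparametrized: you fix $a>r$ and write $t_u=u/(a-r)$, whereas the paper fixes $t$ and scales to $ut$ with $a=r+1/t$; the optimization over $a$ (attained at $a=\Lambda_i'(\theta^\star)$) matches the paper's choice $t^\star=(\Lambda_i'(\theta^\star)-r)^{-1}$, and the Legendre-transform computation $\Lambda_i^*(\Lambda_i'(\theta^\star))=\theta^\star(\Lambda_i'(\theta^\star)-r)$ is identical. Your upper bound, however, takes a genuinely different route. The paper twists by $\theta^\star$ itself, so $\Psi_i(\theta^\star)=0$ and the resulting series $\sum_n e^{n\Psi_i(\theta^\star)}$ does not converge; this forces a split of the sum at $(1+L)t^\star u$, with the finite part handled at $\theta^\star$ and the infinite tail handled with an auxiliary $\theta^\circ$ (where $\Psi_i'(\theta^\circ)=0$, so $\Psi_i(\theta^\circ)<0$), plus a careful choice of $L$. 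You instead propose a single subcritical twist $\theta<\theta^\star$, for which $\Psi_i(\theta)<0$, so the entire series is geometric and no split is needed; you recover $-\theta^\star$ by letting $\theta\uparrow\theta^\star$ afterwards. This is cleaner, and it works, but it hinges on the finite-horizon bound $\ee[e^{\theta Y_i(t)}]\leqslant e^{t\Psi_i(\theta)}$ holding for \emph{all} $t>0$ (not just asymptotically). You correctly flag this as the main gap; the fix is exactly the cluster-monotonicity argument the paper uses at $\theta=\theta^\star$: since $S_{l\leftarrow j}(v)\nearrow S_{l\leftarrow j}$ and $m_{U_{il}}(\theta)\geqslant 1$ for $\theta\geqslant 0$, the integrand $\ee[\prod_l m_{U_{il}}(\theta)^{S_{l\leftarrow j}(v)}]$ is nondecreasing in $v$ and hence bounded by its $v\to\infty$ limit, giving $\log\ee[e^{\theta Y_i(t)}]\leqslant t\Psi_i(\theta)$ for all $t>0$ and all $\theta\in[0,\theta^\star]$. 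Once this is in place, the small-$n$ versus large-$n$ split you mention becomes superfluous in your scheme (it is only needed in the paper's scheme because of the critical twist); the single geometric series already yields $p(u)\leqslant C(\theta,\delta)\,e^{-\theta(u-r\delta)}$ directly, and letting $\theta\uparrow\theta^\star$ finishes. So: same lower bound, a valid and somewhat more streamlined upper bound, with the needed lemma correctly identified though not written out.
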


\begin{proof}
First note that $\theta^\star>0$ is unique by combining the observations $\Psi_i(0) = 0$, $\Psi_i'(0) < 0$ as a consequence of~\eqref{eq: assumption premium rate r}, and the convexity of $\Psi_i(\cdot)$.
The rest of the proof consists of two main steps, which yield a lower bound and an upper bound on the left-hand side of Eqn.~\eqref{eq: bound ruin probability marginal}.

\noindent \textit{--- Lower bound.} The objective is to prove that
\begin{align}
\label{eq: lower bound ruin probability marginal}
    \liminf_{u\to\infty} \frac{1}{u} \log p(u) \geqslant -\theta^\star.
\end{align}
We first observe that, for any $t>0$, we evidently have $p(u) \geqslant p(u,t) := \pp(Y_i(t) > u)$.
This directly implies that, for any $t>0$,
\begin{align*}
    \liminf_{u\to\infty} \frac{1}{u} \log p(u) 
    &\geqslant \lim_{u\to\infty} \frac{1}{u} \log p(u,ut) \\
    &= \liminf_{u\to\infty} \frac{1}{u} \log \pp(Z_i(ut) - rut > u) \\
    &= t \liminf_{u\to\infty} \frac{1}{ut} \log \pp(Z_i(ut)/ut > r + 1/t).
\end{align*}
Then we can use that, by Corollary~\ref{cor: LD marginal}, $\{Z_i(ut)/ut\}_{t\in\rr_+}$ satisfies the LDP with rate function $\Lambda_i^*(\cdot)$. 
We thus obtain
\begin{align*}
    \liminf_{u\to\infty} \frac{1}{u} \log p(u) 
    \geqslant 
    -t \inf_{x>r+1/t} \Lambda_i^*(x).
\end{align*}
Recall the assumption given in Eqn.~\eqref{eq: assumption premium rate r} that $r> \mu_i:=\ee[\bm{U}_{(i)}](\bm{I} - \bm{H})^{-1}\overline{\bm{\lambda}}$.
Further, note that $\Lambda_i^*(\cdot)$ is continuous and non-decreasing on $(\mu_i,\infty)$ by \cite[Lemma 2.7]{GCW04}, and in addition $\Lambda_i^*(\mu_i) = 0$.
Upon combining all these elements, we have that
\begin{align*}
     \liminf_{u\to\infty} \frac{1}{u} \log p(u) 
    \geqslant -t \Lambda_i^*(r+1/t).
\end{align*}

To compute this final expression, observe that since $\Psi_i(0) = 0$, $\Psi_i(\theta^\star) = 0$ and $\Psi_i(\cdot)$ is convex, we have that $\Psi_i'(\theta^\star) = \Lambda_i'(\theta^\star) - r > 0$.
Since $t>0$ was arbitrary, we can in particular take $t =t^\star:= (\Lambda_i'(\theta^\star) - r)^{-1}$.
This implies that
\begin{align*}
    \Lambda_i^*(r+1/t^\star) 
    &= \Lambda_i^*(\Lambda_i'(\theta^\star)) 
    = \sup_{\theta \in \rr} (\theta \Lambda_i'(\theta^\star) - \Lambda_i(\theta)) \\
    &= \theta^\star \Lambda_i'(\theta^\star) - \Lambda_i(\theta^\star) 
    = \theta^\star( \Lambda_i'(\theta^\star) - r)=\frac{\theta^\star}{t^\star};
\end{align*}
the third equality follows by noting that the supremum is attained at $\theta^\star$ since $g(\theta):=\theta\Lambda_i'(\theta^\star) - \Lambda_i(\theta)$ is maximal if $g'(\theta) = 0$, which is when $\theta= \theta^\star$; the fourth equality follows from $\Psi_i(\theta^\star) = 0$, which is equivalent to $\Lambda_i(\theta^\star) = r\theta^\star$; and the last equality follows from the definition of $t^\star$.
Hence, we conclude that 
\begin{align*}
    \liminf_{u\to\infty} \frac{1}{u} \log p(u) 
    \geqslant -t^\star \Lambda_i^*(r + 1/t^\star)
    = -\theta^\star,
\end{align*}
which proves Eqn.~\eqref{eq: lower bound ruin probability marginal}.

\noindent \textit{--- Upper bound.} We next prove the upper bound, i.e.,
\begin{align} \label{eq: upper bound ruin probability marginal}
    \limsup_{u\to\infty} \frac{1}{u} \log p(u) \leqslant -\theta^\star.
\end{align}
To do so, we majorize the ruin probability $p(u)$ by a finite sum and a sum with countably many terms, as follows.
First, observe that for $u> r$, we have
\begin{align*}
    p(u) \leqslant q(u) :=\pp(  \inf_{n\in\nn} Y_i(n) \geqslant u - r),
\end{align*}
where we have used that the net cumulative claim process can go down with maximally an amount $r$ per time unit.
Define, as before, $t^\star = (\Lambda_i'(\theta^\star) - r)^{-1}$.
For any $L > 0$, the union bound gives
\begin{align*}
    q(u) 
    &\leqslant \sum_{n\leqslant (1+L)t^\star u} \pp(Y_i(n) \geqslant u-r) + \sum_{n> (1+L)t^\star u} \pp(Y_i(n)\geqslant u-r) \\
    &\leqslant \sum_{n\leqslant (1+L)t^\star u} \pp(Y_i(n)\geqslant u-r) + \sum_{n> (1+L)t^\star u} \pp(Y_i(n) \geqslant 0),
\end{align*}
where $n$ takes integer values in both sums.
The intuition is that the first sum contains the contribution of paths corresponding to the most likely timescale $t^\star u$ of exceeding $u$.
This means that the first sum is expected to dominate the second sum as $u\to\infty$, which is proven to be correct in the remainder of the proof.

Consider the finite sum, and note that the Chernoff bound yields for any $\theta > 0$ that
\begin{align} \label{eq: intermediate bound proof upper bound}
\begin{split}
    \sum_{n\leqslant (1+L)t^\star u} \pp(Y_i(n)\geqslant u-r)
    &\leqslant \sum_{n\leqslant (1+L)t^\star u} e^{-\theta(u-r)} \ee\big[ e^{\theta Y_i(n)}\big]\\
    &\leqslant (1+L)t^\star u  e^{-\theta(u-r)} \max_{n\in [(1+L)t^\star u]} \ee\big[ e^{\theta Y_i(n)}\big].
\end{split}
\end{align}
In particular, this inequality holds for $\theta = \theta^\star$.
Inserting the familiar expression for the cumulant of $Z_i(t)$, we have for any $t>0$ that
\begin{align*}
    \log \ee\big[e^{\theta^\star Y_i(t)}\big]
    &= \log \ee\big[e^{\theta^\star Z_i(t)}\big] - r\theta^\star t \\
    &= \sum_{j=1}^d \overline{\lambda}_j\int_0^t \ee\bigg[\prod_{l=1}^d m_{U_{il}}(\theta^\star)^{S_{l\leftarrow j}(v)}\bigg] \mathrm{d}v -  t\sum_{j=1}^d \overline{\lambda}_j - r\theta^\star t \\
    &\leqslant t\sum_{j=1}^d \overline{\lambda}_j  \sup_{v\leqslant t} \ee\bigg[\prod_{l=1}^d m_{U_{il}}(\theta^\star)^{S_{l\leftarrow j}(v)}\bigg] - t\sum_{j=1}^d \overline{\lambda}_j - r\theta^\star t \\
    &\leqslant t \left( \sum_{j=1}^d \overline{\lambda}_j \ee\bigg[\prod_{l=1}^d m_{U_{il}}(\theta^\star)^{S_{l\leftarrow j}}\bigg] - \sum_{j=1}^d\overline{\lambda}_j - r\theta^\star\right) = t \Psi_i(\theta^\star)=  0,
\end{align*}
where in the second inequality we used that $S_{l\leftarrow j}(v)$ increases in $v$, and in the final equality we used the definition of $\theta^\star$, as given in Eqn.~\eqref{eq: assumption zero of limit cumulant risk process}.
The next step is to combine this with the upper bound that was found in Eqn.~\eqref{eq: intermediate bound proof upper bound}.
This yields
\begin{align*}
    \lefteqn{\limsup_{u\to\infty} \frac{1}{u} \log \sum_{n\leqslant (1+L)t^\star u} \pp(Y_i(n)\geqslant u-r) }\\&\leqslant\limsup_{u\to\infty} \frac{1}{u}\log\Big( (1+L)t^\star u \max_{n\in [(1+L)t^\star u]} e^{-\theta^\star(u-r)} \ee\big[ e^{\theta^\star Y_i(n)}\big] \Big) \\
    &\leqslant \limsup_{u\to\infty} \frac{1}{u}\log\big((1+L)t^\star u) + \lim_{u\to\infty} \frac{-\theta^\star u + \theta^\star r}{u} + 0= -\theta^\star.
\end{align*}
For the sum with countably many terms, we again apply the Chernoff bound.
For any $\theta > 0$, we thus obtain
\begin{align*}
    \sum_{n> (1+L)t^\star u} \pp(Y_i(n) \geqslant 0) 
    \leqslant \sum_{n> (1+L)t^\star u} \ee\big[e^{\theta Y_i(n)}\big].
\end{align*}
To proceed, we observe that the assumption $\Psi_i(\theta^\star) = 0$ together with $\Psi_i(0) = 0$ implies, by the mean value theorem, that there exists $\theta^\circ >0 $ such that $\Psi_i'(\theta^\circ)=0$.
It requires elementary calculus to verify that
\begin{align*}
    \Psi_i'(0)
    &= \Lambda_i'(0) - r= \sum_{j=1}^d \overline{\lambda}_j \sum_{k=1}^d \ee[U_{ik}]\,\ee[S_{k\leftarrow j}] -r=\ee[\bm{U}_{(i)}]\,\ee[\bm{S}]\,\bm{\overline{\lambda}} -r,
\end{align*}
cf.\ Eqn.~\eqref{eq: LLN for compound hawkes Z}.
We conclude that $\Psi_i'(0) < 0$ from the net profit condition \eqref{eq: assumption premium rate r}.
Combining the above observations with the fact that $\Psi_i(\cdot)$ is convex, it follows that $\Psi_i(\theta^\circ)<0$.
Hence, we can choose $n_0\in\nn$ and $\delta \in (0, | \Psi_i(\theta^\circ)|)$ such that for all $n$ larger than $n_0$, we have that
\begin{align*}
    \frac{1}{n} \log \ee\big[ e^{\theta^\circ Y_i(n)} \big] < \Psi_i(\theta^\circ) + \delta.
\end{align*}
Finally, since $\Psi_i(\theta^\circ) + \delta < 0$, we have that $z:=\exp(\Psi_i(\theta^\circ) + \delta) < 1$, so that we can apply the geometric series to bound
\begin{align*}
    \sum_{n> (1+L)t^\star u} \ee\big[e^{\theta^\circ Y_i(n)}\big]
    \leqslant \sum_{n> (1+L)t^\star u} \exp\big(n(\Psi_i(\theta^\circ) + \delta)\big)
    \leqslant \frac{z^{(1+L)t^\star u}}{1-z}.
\end{align*}
We conclude the proof by combining the two upper bounds.
We take $L$ large enough such that
$z^{(1+L)t^\star}<e^{-\theta^\star},$
for which it is sufficient that
\[L> \frac{\theta^\star}{ (\Phi(\theta^\circ) + \delta)t^\star}.\]
As a consequence,
\begin{align}
   \limsup_{u\to\infty}\frac{1}{u} \log p(u) \leqslant  \limsup_{u\to\infty}\frac{1}{u}\log\left( (1+L)t^\star u \, e^{-\theta^\star(u-r)}  +  \frac{z^{(1+L)t^\star u}}{1-z}\right)=-\theta^\star.
\end{align}
We have thus established \eqref{eq: upper bound ruin probability marginal}.
\end{proof}

\section{Rare Event Simulation}\label{sec:simulation}

In this section, we show how to use importance sampling to efficiently estimate rare event probabilities.
This is accomplished by first exponentially twisting the underlying probability measure~${\mathbb P}$.
In Section~\ref{CoM}, we describe how to identify the model primitives under this new measure, which we throughout refer to as ${\mathbb Q}$.
In the two subsequent subsections, we specifically consider the probability of ruin in component $i\in[d]$ (of which the logarithmic asymptotics have been derived in Theorem~\ref{thm:rp}), and the probability of the multivariate compound Hawkes process attaining rare values (of which the logarithmic asymptotics have been established in Theorem~\ref{thm:LD}). 

\subsection{Identification of the alternative distribution}\label{CoM}
In this subsection, we describe how to construct the exponentially twisted version of the multivariate compound Hawkes process, which we associate with the probability measure ${\mathbb Q}$, without having explicit expressions for the moment and probability generating functions of the original process.
More specifically, we identify a stochastic process of which the limiting cumulant equals, for a vector ${\bs\theta}^\star\in{\mathbb R}^{d^\star}$,
\[\Psi^{\mathbb Q}({\bs \theta}) := \Psi({\bs \theta}+{\bs \theta}^\star)- \Psi({\bs \theta}^\star),\]
with 
$\Psi({\bs \theta}) = \Lambda({\bs\theta}) - {\bs r}^\top {\bs\theta},$
and, by virtue of Lemma~\ref{lem: limit cumulant of Z(t)},
\begin{equation}\label{eq:target}\Lambda({\bs\theta})=\sum_{j=1}^d \overline{\lambda}_j \big(f_j(\bm{m}_{\bm{U}}(\bm{\theta})) - 1\big).\end{equation}

To this end, it is first verified that
\[\Lambda({\bs \theta}+{\bs \theta}^\star)- \Lambda({\bs \theta}^\star) =
\sum_{j=1}^d \overline{\lambda}_j f_j(\bm{m}_{\bm{U}}(\bm{\theta}^\star)) \left(\frac{f_j(\bm{m}_{\bm{U}}(\bm{\theta}+{\bs \theta}^\star ))}{f_j(\bm{m}_{\bm{U}}(\bm{\theta}^\star))} - 1\right).\]
Next, for $j\in[d]$,
\begin{align*}
    \frac{f_j(\bm{m}_{\bm{U}}(\bm{\theta}+{\bs \theta}^\star ))}{f_j(\bm{m}_{\bm{U}}(\bm{\theta}^\star))} &=
\frac{1}{f_j(\bm{m}_{\bm{U}}(\bm{\theta}^\star))}\sum_{{\bs n}\in{\mathbb N}_0^d} 
{{\mathbb P}({\bs S}_j={\bs n})}
\prod_{l=1}^d (m_{{\bs U}_l}({\bs\theta}+{\bs \theta}^\star))^{n_l}\\
&=\sum_{{\bs n}\in{\mathbb N}_0^d} 
\frac{{\mathbb P}({\bs S}_j={\bs n})}{f_j(\bm{m}_{\bm{U}}(\bm{\theta}^\star))}\prod_{l=1}^d (m_{{\bs U}_l}({\bs \theta}^\star))^{n_l}\left(\frac{{m}_{{\bs U}_l}(\bm{\theta}+{\bs\theta}^\star)}{{m}_{{\bs U}_l}(\bm{\theta}^\star)}\right)^{n_l}.
\end{align*}
Now define, for $j\in[d]$,
\[{\mathbb Q}({\bs S}_j={\bs n}) := \frac{{\mathbb P}({\bs S}_j={\bs n})}{f_j(\bm{m}_{\bm{U}}(\bm{\theta}^\star))}\prod_{l=1}^d (m_{{\bs U}_l}({\bs \theta}^\star))^{n_l},\]
which induces a probability distribution (i.e., non-negative and summing to 1) by its very construction. 
Define by $f_j^{\mathbb Q}({\bs z})$ the corresponding probability generating function, which is the counterpart of $f_j({\bs z})$ under ${\mathbb Q}$: for $j\in[d]$,
\begin{equation}f_j^{\mathbb Q}({\bs z}) = 
\sum_{{\bs n}\in{\mathbb N}_0^d} {\mathbb Q}({\bs S}_j={\bs n})\,\prod_{l=1}^d z_l^{n_l}
=\frac{f_j(m_{{\bs U}_1}({\bs \theta}^\star)z_1,\ldots,m_{{\bs U}_d}({\bs \theta}^\star)z_d)}{f_j(\bm{m}_{\bm{U}}(\bm{\theta}^\star))}.\label{fqz}\end{equation}
In addition, define, for $l\in[d]$,
\begin{align}\label{densu}
    {\mathbb Q}(U_{1l}\in{\rm d}x_1,\ldots, U_{d^\star l}\in{\rm d}x_{d^\star}):=
    \frac{{\mathbb P}(U_{1l}\in{\rm d}x_1,\ldots, U_{d^\star l}\in{\rm d}x_{d^\star})}{{m}_{{\bs U}_l}(\bm{\theta}^\star)}
     \prod_{k=1}^{d^\star}{e^{\theta_k^\star x_k}},
\end{align}
which generates a probability distribution (i.e., non-negative and integrating to 1); let
${m}^{\mathbb Q}_{{\bs U}_l}(\bm{\theta})$ be the associated moment generating function, given by
\[{m}^{\mathbb Q}_{{\bs U}_l}(\bm{\theta})= 
\frac{{m}_{{\bs U}_l}(\bm{\theta}+{\bs\theta}^\star)}{{m}_{{\bs U}_l}(\bm{\theta}^\star)}. \]
We finally define the base rates under ${\mathbb Q}$ via
\begin{equation}\label{blq}\overline{\lambda}_j^{\mathbb Q}:= \overline{\lambda}_j f_j(\bm{m}_{\bm{U}}(\bm{\theta}^\star)),\end{equation}
for $j\in [d]$.

Upon combining the objects defined above, it now requires an elementary verification to conclude that
\[\Lambda({\bs \theta}+{\bs \theta}^\star)- \Lambda({\bs \theta}^\star) =\sum_{j=1}^d \overline{\lambda}_j^{\mathbb Q} \left(f_j^{\mathbb Q}(\bm{m}^{\mathbb Q}_{\bm{U}}(\bm{\theta})) - 1\right),\]
as desired; cf.\ \eqref{eq:target}.
This means that we have uniquely characterized the joint distribution of the cluster sizes ${\bs S}_j$ (for $j\in[d]$), the joint distribution of the claim sizes ${\bs U}_l$ (for $l\in[d]$), and the base rates under the alternative measure ${\mathbb Q}$.

The only question left is: How does one sample a cluster size ${\bs S}_j$ under ${\mathbb Q}$? 
More concretely: What is the distribution of the marks $B_{lj}$ under the alternative measure ${\mathbb Q}$, and how should the corresponding decay functions $g_{lj}(\cdot)$ be adapted? 
To this end, we revisit~\eqref{eq: fixed point equation pgf of S_j (steady state)}, which we rewrite to 
\begin{equation}\label{fpeP}f_j({\bs z}) = z_j \,m_{{\bs B}_j}(c_{1j}(f_1({\bs z})-1),\ldots,c_{dj}(f_d({\bs z})-1)),\end{equation}
using the self-evident notation
\[m_{{\bs B}_j}({\bs\theta}) := {\mathbb E} \,\exp\left(\sum_{m=1}^d \theta_m B_{mj}\right).\]
Introduce the compact notation ${\bs y}_{{\bs\theta}^\star}({\bs z}):=(m_{{\bs U}_1}({\bs \theta}^\star)z_1,\ldots,m_{{\bs U}_d}({\bs \theta}^\star)z_d)^\top$. 
Hence, as an immediate consequence of \eqref{fqz}, we obtain
\[f_j^{\mathbb Q}({\bs z}) = \frac{f_j({\bs y}_{{\bs\theta}^\star}({\bs z}))}{f_j({\bs y}_{{\bs\theta}^\star}({\bs 1}))}.\]
Upon combining the two previous displays, we conclude that we can rewrite $f_j^{\mathbb Q}({\bs z})$ as
\begin{align*}f_j^{\mathbb Q}({\bs z}) &=  \frac{m_{{\bs U}_j}({\bs \theta}^\star)z_j\cdot m_{{\bs B}_j}(c_{1j}(f_1({\bs y}_{{\bs\theta}^\star}({\bs z}))-1),\ldots,c_{dj}(f_d({\bs y}_{{\bs\theta}^\star}({\bs z}))-1))}{m_{{\bs U}_j}({\bs \theta}^\star)\cdot m_{{\bs B}_j}(c_{1j}(f_1({\bs y}_{{\bs\theta}^\star}({\bs 1}))-1),\ldots,c_{dj}(f_d({\bs y}_{{\bs\theta}^\star}({\bs 1}))-1))}\\
&=z_j\,\frac{m_{{\bs B}_j}(c_{1j}(f_1({\bs y}_{{\bs\theta}^\star}({\bs z}))-1),\ldots,c_{dj}(f_d({\bs y}_{{\bs\theta}^\star}({\bs z}))-1))}{m_{{\bs B}_j}(c_{1j}(f_1({\bs y}_{{\bs\theta}^\star}({\bs 1}))-1),\ldots,c_{dj}(f_d({\bs y}_{{\bs\theta}^\star}({\bs 1}))-1))}
.\end{align*}
To simplify this further, we write
\begin{equation}\label{cQ}c^{\mathbb Q}_{lj}= c_{lj} f_l({\bs y}_{{\bs\theta}^\star}({\bs 1})),\:\:\:\:\bar c^{\mathbb Q}_{lj}:=c^{\mathbb Q}_{lj}-c_{lj},\end{equation}
such that
\[f_j^{\mathbb Q}({\bs z}) = z_j \frac{\displaystyle m_{{\bs B}_j}\left(c_{1j}^{\mathbb Q}\left(\frac{f_1({\bs y}_{{\bs\theta}^\star}({\bs z}))}{f_1({\bs y}_{{\bs\theta}^\star}({\bs 1}))}-1\right)+\bar c^{\mathbb Q}_{1j},\ldots,c_{dj}^{\mathbb Q}\left(\frac{f_d({\bs y}_{{\bs\theta}^\star}({\bs z}))}{f_d({\bs y}_{{\bs\theta}^\star}({\bs 1}))}-1\right)+\bar c^{\mathbb Q}_{dj}
\right)}{m_{{\bs B}_j}(\bar c^{\mathbb Q}_{1j},\ldots,\bar c^{\mathbb Q}_{dj})}.\]
We now focus on the distribution of the marks under the alternative measure ${\mathbb Q}$. 
Denoting $\bar {\bs c}_j^{\mathbb Q}=(\bar c^{\mathbb Q}_{1j},\ldots,\bar c^{\mathbb Q}_{dj})^\top$, we define
\begin{align}\label{densb}
    {\mathbb Q}(B_{1j}\in{\rm d}x_1,\ldots, B_{dj}\in{\rm d}x_d):=
    \frac{{\mathbb P}(B_{1j}\in{\rm d}x_1,\ldots, B_{dj}\in{\rm d}x_d)}
    {{m}_{{\bs B}_j}(\bar {\bs c}_j^{\mathbb Q})} \prod_{l=1}^d e^{\bar c^{\mathbb Q}_{lj} x_l},
\end{align}
so that
\[{m}^{\mathbb Q}_{{\bs B}_j}(\bm{\theta})= 
\frac{{m}_{{\bs B}_j}(\bm{\theta}+\bar {\bs c}_j^{\mathbb Q})}{{m}_{{\bs B}_j}(\bar {\bs c}_j^{\mathbb Q})}. \]
Combining the above relations, we thus conclude that
\[f_j^{\mathbb Q}({\bs z}) = z_j \,m^{\mathbb Q}_{{\bs B}_j}\!\left(c^{\mathbb Q}_{1j}(f^{\mathbb Q}_1({\bs z})-1),\ldots,c^{\mathbb Q}_{dj}(f^{\mathbb Q}_d({\bs z})-1)\right),\]
which has, appealing to Eqn.~\eqref{fpeP}, the right structure. 
This means that we have identified the distribution of the marks and the decay functions under ${\mathbb Q}.$

The following summarizes the above findings. 
Most importantly, the exponentially twisted version of the multivariate compound Hawkes process is again a multivariate compound Hawkes process, but (evidently) with different model primitives.  
Specifically, the ${\bs\theta}^\star$-twisted version of the multivariate compound Hawkes process can be constructed as follows:
\begin{itemize}
    \item[$\circ$] the base rate $\overline{\lambda}_j$ is replaced by $\overline{\lambda}_j^{\mathbb Q}=\overline{\lambda}_j\, f_j(\bm{m}_{\bm{U}}(\bm{\theta}^\star))$; cf.\  \eqref{blq}.
    \item[$\circ$] the density of ${\bs U}_l$ is replaced by ${\mathbb Q}(U_{1l}\in{\rm d}x_1,\ldots, U_{d^\star l}\in{\rm d}x_{d^\star})$, as given by \eqref{densu};
    \item[$\circ$] the density of ${\bs B}_j$ is replaced by ${\mathbb Q}(B_{1j}\in{\rm d}x_1,\ldots, B_{dj}\in{\rm d}x_d)$, as given by \eqref{densb};
     \item[$\circ$] the decay function $g_{lj}(\cdot)$ is replaced by $g_{lj}^{\mathbb Q}(\cdot):=g_{lj}(\cdot)\,f_l({\bs y}_{{\bs\theta}^\star}({\bs 1}))=g_{lj}(\cdot)\,f_l(\bm{m}_{\bm{U}}(\bm{\theta}^\star))$; cf.\ \eqref{cQ}.
\end{itemize}
This exponentially twisting mechanism generalizes the one identified for the \textit{univariate} compound Hawkes process with \textit{unit} marks, featuring in the statement of \cite[Theorem 2.2]{ST10}.

\subsection{Ruin probabilities}\label{RPr}
In this subsection, we return to the problem of assessing the ruin probability pertaining to the net cumulative claim process $Y_i(t)$, as defined by \eqref{defY}. 
We show that twisting ${\bs Y}(t)$ by ${\bs\theta}^\star=(0,\ldots,0,\theta^\star,0,\ldots,0)^{\rm \top}$, with the $\theta^\star$ corresponding to the $i$-th entry and solving $\Psi_i(\theta^\star)=0$, leads to an estimator that is asymptotically efficient (also sometimes referred to as logarithmically efficient, or asymptotically optimal); in the remainder of this subsection, we refer to the alternative measure induced by this specific twist for fixed and given $i$ by ${\mathbb Q}$. 
For more background on optimality notions of importance sampling procedures, such as asymptotic efficiency, we refer to \cite[Section VI.1]{AG07}.
Our proof is given in Appendix~\ref{app:B}. 
In principle, it follows the same structure as the one given in \cite[Section 4]{ST10}; %as various steps directly mimic their counterparts in \cite{ST10}, 
therefore, we %leave out some evident details and 
focus on the main innovations in this general, multivariate setting with random marks.

Recall that $p(u) = {\mathbb P}(\tau_u <\infty)$, with $\tau_u$ the first time that $Y_i(\cdot)$ exceeds level $u$.
First note that $p(u)={\mathbb E}_{\mathbb Q}[ L_{\tau_u}I]$, where $I$ is the indicator function of the event $\{\tau_u<\infty\}$ and $L_{\tau_u}$ is the appropriate likelihood ratio, which quantifies the likelihood of the sampled path under ${\mathbb P}$ relative to ${\mathbb Q}$.
More precisely, $L_{\tau_u}$ is the Radon-Nikodym derivative of the sampled path under the measure ${\mathbb P}$ relative to the measure ${\mathbb Q}$, evaluated at the ruin time $\tau_u$. 
As in \cite[Lemma 4.3]{ST10}, it can be concluded that---essentially due to the fact that we changed the drift of the risk process from a negative value (under ${\mathbb P}$) into a positive value (under ${\mathbb Q}$)---under ${\mathbb Q}$ eventually any positive value is reached by the process $Y_i(\cdot)$. 
Thus, $I\equiv 1$ with ${\mathbb Q}$-probability~1, and hence $p(u)={\mathbb E}_{\mathbb Q}[L_{\tau_u}]$. 

Following the reasoning in \cite{ST10} (i.e., effectively relying on a general result in \cite{JM75}), we can express the likelihood ratio in terms of the various quantities pertaining to the original measure ${\mathbb P}$ and their counterparts under ${\mathbb Q}$. 
Indeed,  the likelihood ratio at time $t$ equals
\begin{align}
\begin{split}
    \frac{\ddiff \pp}{\ddiff \qq}\Big|_{\cF_t} = L_t     &=\exp\left(-\sum_{j=1}^d\int_0^t (\lambda_j(s)-\lambda_j^{\mathbb Q}(s))
    \,{\rm d}s\right)
    \exp\left(\sum_{j=1}^d\int_0^t \log\frac{\lambda_j(s)}{\lambda_j^{\mathbb     Q}(s)}\ddiff N_j(s)\right) \\
    &\quad \times  \exp\left( \sum_{j=1}^d\sum_{r=1}^{N_j(t)} \log\ell_j\big(\bm{B}_{j,r}) \right)
    \prod_{j=1}^d \frac{m_{{\bs U}_j}({\bs\theta}^\star)^{N_j(t)}}{e^{\theta^\star_j Z_j(t)}},
\end{split}
\end{align}
where
\begin{align}\label{lss}
    \lambda_j(s) =\overline{\lambda}_j+\sum_{l=1}^d\sum_{r=1}^{N_l(s)} B_{jl,r}\, g_{jl}(s-T_{l,r}), \quad
    \lambda_j^{\qq}(s) =\overline{\lambda}^{\qq}_j+\sum_{l=1}^d\sum_{r=1}^{N_l(s)} B_{jl,r}\, g_{jl}^{\qq}(s-T_{l,r}),
\end{align}
with $\overline{\lambda}^{\qq}_j$ and $g^{\qq}_{jl}(\cdot)$ as defined in Section~\ref{CoM} and all random objects sampled under ${\mathbb Q}$, and with $\ell_j({\bs x})$ denoting the ratio of the density of the random marks ${\bm{B}}_j$ under ${\mathbb P}$ and its counterpart under ${\mathbb Q}$ evaluated in the argument ${\bs x}$.
It is directly seen, from the construction of the measure ${\mathbb Q}$, that 
\[\lambda_j^{\mathbb Q}(s) = \lambda_j(s) f_j(\bm{m}_{\bm{U}}(\bm{\theta}^\star)) > \lambda_j(s).\]
Note that the relation between $\lambda_j^\qq(s)$ and $\lambda_j(s)$, and the fact that $\bm{\theta}^\star$ is non-zero in the $i$-th entry, allows us to express the likelihood ratio as
\begin{align}\label{eq: likelihood ratio rewritten Rmarks}
\begin{split}
    L_t &= \exp\left(-\sum_{j=1}^d \big(1-f_j(\bm{m}_{\bm{U}}(\bm{\theta}^\star))\big)
    \int_0^t\lambda_j(s)\ddiff s\right) 
    e^{-\theta^\star Z_i(t)} \\
    &\quad \times
    \exp\left( \sum_{j=1}^d\sum_{r=1}^{N_j(t)} \log \ell_j\big(\bm{B}_{j,r}\big)\right)
    \prod_{j=1}^d\left( \frac{m_{\bm{U}_j}(\bm{\theta}^\star)}{f_j(\bm{m}_{\bm{U}}(\bm{\theta}^\star))}\right)^{N_j(t)}.
\end{split}
\end{align}

We now introduce the importance sampling estimator and establish its efficiency.
With $n\in\nn$, we define the importance sampling estimator of $p(u)$ by
\begin{align} \label{eq: def IS estimator}
    p_n(u) := \frac{1}{n} \sum_{m=1}^n L_{\tau_u}^{(m)},
\end{align}
where $L_{\tau_u}^{(m)}$ (for $m=1,\ldots,n$) are independent replications of $L_{\tau_u}$, sampled under ${\mathbb Q}$.
In our context, asymptotic efficiency is to be understood as
\begin{align*}
    \lim_{u\to\infty} \frac{1}{u} \log \sqrt{\mathbb{V}{\rm ar}_{\qq}L_{\tau_u}} \leqslant \lim_{u\to\infty} \frac{1}{u} \log p(u),
\end{align*}
that is, the measure $\mathbb{Q}$ is asymptotically efficient for simulations; see Siegmund's criterion \cite{S76}.

\begin{theorem}\label{aseff}
The importance sampling estimator $p_n(u)$ in \eqref{eq: def IS estimator}, which relies on the alternative measure ${\mathbb Q}$ that corresponds to the exponential twist ${\bs\theta}^\star= (0,\ldots,0,\theta^\star,0,\ldots,0)^{\rm \top}$, is asymptotically efficient. 
\end{theorem}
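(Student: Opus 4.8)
The plan is to bound $\mathbb{V}{\rm ar}_{\mathbb Q} L_{\tau_u}$ by $\mathbb{E}_{\mathbb Q}[L_{\tau_u}^2]$ and to show that this second moment is at most $C\,e^{-2\theta^\star u}$ for some constant $C$, after which the claim follows since $p(u) = \mathbb{E}_{\mathbb Q}[L_{\tau_u}]$ decays exactly like $e^{-\theta^\star u}$ by Theorem \ref{thm:rp}. The starting point is the explicit expression \eqref{eq: likelihood ratio rewritten Rmarks} for $L_t$, evaluated at $t=\tau_u$. Writing $Y_i(\tau_u) \geqslant u$ and using $Z_i(\tau_u) = Y_i(\tau_u) + r\tau_u \geqslant u + r\tau_u$ when ruin occurs (with an overshoot that can be controlled since $Z_i$ has only upward jumps, so the overshoot is bounded by the size of the jump triggering ruin), the factor $e^{-\theta^\star Z_i(\tau_u)}$ already supplies the dominant $e^{-\theta^\star u}$. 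Squaring $L_{\tau_u}$ doubles every exponent, so the crux is to show that the remaining random factors—namely $\exp\big(-\sum_j (1-f_j(\bm m_{\bm U}(\bm\theta^\star)))\int_0^{\tau_u}\lambda_j(s)\,\mathrm{d}s\big)$, $\exp\big(\sum_j\sum_{r\leqslant N_j(\tau_u)} \log\ell_j(\bm B_{j,r})\big)$, and $\prod_j (m_{\bm U_j}(\bm\theta^\star)/f_j(\bm m_{\bm U}(\bm\theta^\star)))^{N_j(\tau_u)}$—when raised to the second power and multiplied by $e^{2r\theta^\star \tau_u}$, have a ${\mathbb Q}$-expectation that is uniformly bounded in $u$.

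The key structural observation, exactly as in \cite[Section 4]{ST10}, is that the product of the stochastic compensator term, the base-rate ratio term, and the drift-compensation term $e^{2r\theta^\star\tau_u}$ collapses: because $\Psi_i(\theta^\star)=0$, i.e.\ $\Lambda_i(\theta^\star) = r\theta^\star$, the exponential growth in $\tau_u$ cancels. Concretely, one rewrites $\int_0^{\tau_u}\lambda_j(s)\,\mathrm{d}s$ in terms of $N_j(\tau_u)$ and the decay-function tails via the cluster/branching decomposition, and groups terms cluster-by-cluster; the per-cluster contribution to the squared likelihood ratio is then, in expectation under ${\mathbb Q}$, a bounded quantity precisely because $\bm\theta^\star$ lies in the open domain ${\mathscr D}_\Lambda^\circ$ strictly below the steepness boundary $\hat{\bm\theta}$ identified in Lemma \ref{lem: limit cumulant of Z(t)} (so all the relevant moment generating functions $m_{\bm B_j}$, $m_{\bm U_l}$ and the generating functions $f_j$, $f_j^{\mathbb Q}$ are finite in a neighbourhood). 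One should invoke the Wald-type / compensator identity for the Hawkes intensity (the same martingale argument underlying the change of measure, cf.\ \cite{JM75}) to turn $\mathbb{E}_{\mathbb Q}[e^{2\theta^\star r\tau_u}\times(\text{compensator and ratio factors})^2]$ into a finite constant; this is where one also reads off the Lundberg-type bound $p(u)\leqslant C e^{-\theta^\star u}$ mentioned in the introduction, by the same computation without squaring.

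I expect the main obstacle to be the bookkeeping needed to show that the squared overshoot factor and the squared mark-density ratio $\ell_j(\bm B_{j,r})^2$, summed over all events in all clusters born up to $\tau_u$, have finite ${\mathbb Q}$-expectation uniformly in $u$. For the marks one needs $\mathbb{E}_{\mathbb Q}[\ell_j(\bm B_j)^2] < \infty$, which amounts to $\mathbb{E}_{\mathbb P}[\ell_j(\bm B_j)] = \mathbb{E}_{\mathbb P}[e^{-\bar{\bm c}^{\mathbb Q}_j{}^\top \bm B_j} m_{\bm B_j}(\bar{\bm c}_j^{\mathbb Q})\cdots]$-type quantities being finite — this holds under Assumption \ref{ass: mgfs B exist} together with the fact that $\bar{\bm c}_j^{\mathbb Q}$ stays within the region where $m_{\bm B_j}$ is finite, since $\bm\theta^\star$ is strictly interior. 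For the overshoot one uses that the jump sizes $U_{ij}$ under ${\mathbb Q}$ still have finite moment generating function near $\theta^\star$, by Assumption \ref{ass: mgfs U exist} and the interiority of $\bm\theta^\star$, so the overshoot has a uniformly bounded exponential moment. Assembling these pieces, one obtains $\mathbb{E}_{\mathbb Q}[L_{\tau_u}^2] \leqslant C e^{-2\theta^\star u}$, hence
\begin{align*}
\lim_{u\to\infty}\frac1u\log\sqrt{\mathbb{V}{\rm ar}_{\mathbb Q} L_{\tau_u}} \leqslant \lim_{u\to\infty}\frac1u\log\sqrt{\mathbb{E}_{\mathbb Q}[L_{\tau_u}^2]} \leqslant -\theta^\star = \lim_{u\to\infty}\frac1u\log p(u),
\end{align*}
which is Siegmund's criterion and completes the proof.
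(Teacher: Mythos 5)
There is a genuine gap, and it is a structural one. Your strategy (bound $\operatorname{Var}_{\mathbb Q}L_{\tau_u}\leqslant\mathbb E_{\mathbb Q}[L_{\tau_u}^2]\leqslant Ce^{-2\theta^\star u}$, then invoke Siegmund) is sound in outline, and you correctly flag the role of $\Psi_i(\theta^\star)=0$ in cancelling the $\tau_u$-growth coming from the base-rate part of the compensator. But you have not carried the cancellation to its conclusion, and as a result you predict bookkeeping work (bounding $\mathbb E_{\mathbb Q}[\ell_j(\bm B_j)^2]$ uniformly, controlling the overshoot) that the actual proof avoids entirely. The correct observation is that the bound is \emph{pathwise}: one shows $L_{\tau_u}\leqslant e^{-\theta^\star u}$ almost surely under $\mathbb Q$, not merely that moments of the residual factors are bounded. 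Concretely, after splitting $\int_0^{\tau_u}\lambda_j(s)\,\mathrm ds$ into its base-rate part (which, via $\Psi_i(\theta^\star)=0$, produces exactly $e^{r\theta^\star\tau_u}$ and thus combines with $e^{-\theta^\star Z_i(\tau_u)}$ and the ruin inequality $Z_i(\tau_u)-r\tau_u>u$ to give $e^{-\theta^\star u}$) and its self-excitation part, the self-excitation part is bounded using $c_{lj}=\lVert g_{lj}\rVert_{L^1(\mathbb R_+)}$ by $\exp\big(\sum_{j,l,r}B_{lj,r}c_{lj}(f_l(\bm m_{\bm U}(\bm\theta^\star))-1)\big)$, which cancels \emph{exactly} against the exponential part of $\prod_{j,r}\ell_j(\bm B_{j,r})$ because $\bar c^{\mathbb Q}_{lj}=c_{lj}(f_l(\bm m_{\bm U}(\bm\theta^\star))-1)$ by construction; and the residual product $\prod_j\big(m_{\bm U_j}(\bm\theta^\star)\,m_{\bm B_j}(\bar{\bm c}^{\mathbb Q}_j)/f_j(\bm m_{\bm U}(\bm\theta^\star))\big)^{N_j(\tau_u)}$ equals $1$ identically because of the fixed-point relation~\eqref{eq: fixed point equation pgf of S_j (steady state)} evaluated at $\bm z=\bm m_{\bm U}(\bm\theta^\star)$.

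Because every residual factor cancels algebraically, there is no need (and indeed no room) for a constant $C$, no moment condition $\mathbb E_{\mathbb Q}[\ell_j(\bm B_j)^2]<\infty$ to verify, and no overshoot to control: the inequality $Z_i(\tau_u)>u+r\tau_u$ is used only as a lower bound on $Z_i(\tau_u)$, so the overshoot makes the bound better, not worse, and nothing about its distribution enters. Your proposal, by decomposing $L_{\tau_u}^2$ into individually-to-be-bounded pieces, would have to grapple with the joint distribution under $\mathbb Q$ of $N_j(\tau_u)$, the marks born by $\tau_u$, and $\tau_u$ itself — a nontrivial dependence structure that the exact cancellation sidesteps. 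Once you observe the pathwise bound $L_{\tau_u}\leqslant e^{-\theta^\star u}$, both $\mathbb E_{\mathbb Q}L_{\tau_u}\leqslant e^{-\theta^\star u}$ (the Lundberg bound \eqref{eq: lundberg bound}) and $\mathbb E_{\mathbb Q}L_{\tau_u}^2\leqslant e^{-2\theta^\star u}$ follow trivially, and combined with the matching lower bound from Theorem~\ref{thm:rp} you obtain asymptotic efficiency. I recommend you look for the exact cancellation in the likelihood ratio before attempting any moment estimates.
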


\begin{remark}{\em 
In the final part of the proof of Theorem~\ref{aseff}, we have in passing derived a Lundberg-type inequality for this non-standard ruin model.
Indeed, we have that the ruin probability $p(u)$, corresponding to the net cumulative claim process $Y_i(\cdot)$, satisfies the upper bound
\begin{align}\label{eq: lundberg bound}
    p(u) \leqslant e^{-\theta^\star u},
\end{align}
uniformly in $u>0$.}
\end{remark}

The immediate consequence of the above theorem, which substantially generalizes \cite[Theorem 4.5]{ST10}, is the following. 
Suppose that we wish to obtain an estimate with a certain {\it precision}, defined as the ratio of the confidence interval's half-width (which is proportional to the standard deviation of the estimate) and the estimate itself. 
Using simulation under the actual measure ${\mathbb P}$, the number of runs required to obtain a given precision is inversely proportional to the probability to be estimated. 
In our specific case, this means that under ${\mathbb P}$, due to Theorem~\ref{thm:rp}, this number grows exponentially in $u$ (roughly like $e^{\theta^\star u}$, that is). 
Under the alternative measure ${\mathbb Q}$, however, Theorem \ref{aseff} entails that the number of runs to achieve this precision grows {\it subexponentially}, thus yielding a substantial variance reduction. 
This means that, despite the fact that the ruin probability decays very rapidly as $u$ grows, the simulation effort required to estimate it grows at a relatively modest pace.

\subsection{Exceedance probabilities}\label{EPr} In this subsection, we consider the estimation of multivariate exceedance probabilities of the type 
\[q_t({\bs a}):={\mathbb P}\left(\frac{Z_1(t)}{t}\geqslant a_1,\ldots,\frac{Z_{d^\star}(t)}{t} \geqslant a_{d^\star}\right),\]
where the set $A:=[a_1,\infty)\times\cdots\times [a_{d^\star},\infty)$
does not contain the vector ${\bs\mu}$, with, as before
\[\mu_i= \ee[\bm{U}_{(i)}](\bm{I} - \bm{H})^{-1}\overline{\bm{\lambda}},\]
the asymptotic value of the process $Z_i(t)/t$.
We consider the regime that $t$ grows large, in which the event of interest becomes increasingly rare by Theorem~\ref{thm:LD}.
We show that the associated importance sampling estimator is asymptotically efficient.
%In our exposition, we start by detailing the case $d^\star=1$ and then provide a proof by example for the case $d^\star = 2$, where it turns out that multiple cases need to be distinguished.
%We finish by briefly discussing how the case of general $d^\star$ can be dealt with, relying on the principles developed for the $d^\star=2$ case. 
%As will become clear in our analysis, the underlying computations strongly resemble those in Section~\ref{RPr}.

%We now introduce the importance sampling estimator for the exceedance probability and then prove its efficiency.
Let $I \equiv I_{\bm{a}}$ be the indicator for the rare event, i.e., set $I_{\bm{a}} := \{ Z_1(t)\geqslant a_1t,\dots, Z_{d^\star}(t)\geqslant a_{d^\star}t\}$ for any given $t>0$.
We define the importance sampling estimator for the probability of this event by
\begin{align} \label{eq: def IS estimator exceedance}
    q_{t,n}(\bm{a}) := \frac{1}{n} \sum_{m=1}^n L_t^{(m)} I^{(m)}_{\bm{a}},
\end{align}
where $L_t^{(m)}$ are independent replications of $L_t$, sampled under $\qq$ with 
a twist parameter depending on $\bm{a}$.
Here, $I^{(m)}_{\bm{a}}$ are the associated indicators.
We state the following theorem, the proof of which is contained in Appendix~\ref{app:C}.
\begin{theorem}\label{aseffq}
The importance sampling estimator $q_{t,n}(\bm{a})$ in \eqref{eq: def IS estimator exceedance}, when using the alternative measure ${\mathbb Q}$ that corresponds to the exponential twist 
\[{\bs\theta}({\bs a}^\star)= \arg\sup_{\bs\theta}({\bs\theta}^\top{\bs a}^\star-\Lambda({\bs\theta})),\] with ${\bs a}^\star :=\arg\inf_{{\bs x}\in A}\Lambda^\star({\bs x})$, is asymptotically efficient. 
\end{theorem}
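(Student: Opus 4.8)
The plan is to follow the blueprint of the proof of Theorem~\ref{aseff} (itself patterned on \cite[Section~4]{ST10}), the genuinely new ingredient being a \emph{deterministic}, Lundberg-type bound on the likelihood ratio restricted to the rare event. Throughout, write $\bm\theta^\star:=\bm\theta(\bm a^\star)$ and $f_j:=f_j(\bm m_{\bm U}(\bm\theta^\star))$, and let ${\mathbb Q}$ denote the exponentially twisted measure constructed in Section~\ref{CoM} for this $\bm\theta^\star$. First I would reduce asymptotic efficiency to a second-moment estimate: since $\mathbb{V}{\rm ar}_{{\mathbb Q}}(L_tI_{\bm a})\leqslant{\mathbb E}_{{\mathbb Q}}[(L_tI_{\bm a})^2]$, and by Theorem~\ref{thm:LD} (with $A$ closed, $A^\circ$ dense in $A$, and $\Lambda^*$ a good rate function attaining its infimum over $A$ at $\bm a^\star$) one has $t^{-1}\log q_t(\bm a)\to-\Lambda^*(\bm a^\star)$, it suffices to prove
\[\limsup_{t\to\infty}\frac1t\log{\mathbb E}_{{\mathbb Q}}\big[(L_tI_{\bm a})^2\big]\leqslant-2\Lambda^*(\bm a^\star).\]

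Next I would record the relevant convex-analytic facts. By essential smoothness of $\Lambda(\cdot)$ (established in the proof of Theorem~\ref{thm:LD}), the supremum defining $\bm\theta^\star$ is attained at an interior point of ${\mathscr D}_\Lambda$ with $\nabla\Lambda(\bm\theta^\star)=\bm a^\star$, so that $\bm\theta^\star\leqslant\hat{\bm\theta}$ and the change of measure of Section~\ref{CoM} applies; moreover, by definition of $\Lambda^*$ and of $\bm\theta^\star$, $\Lambda^*(\bm a^\star)=(\bm\theta^\star)^\top\bm a^\star-\Lambda(\bm\theta^\star)$. Since $\bm a^\star$ minimizes $\Lambda^*$ over the box $A=\{\bm x\geqslant\bm a\}$, the first-order conditions give $\bm\theta^\star=\nabla\Lambda^*(\bm a^\star)\geqslant\bm0$ together with the complementary-slackness identity $\theta_i^\star(a_i^\star-a_i)=0$ for every $i$, hence $(\bm\theta^\star)^\top\bm a^\star=(\bm\theta^\star)^\top\bm a$. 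Then I would recall, from \eqref{eq: likelihood ratio rewritten Rmarks} (which carries over verbatim to a vector twist $\bm\theta^\star$), the decomposition $L_t=e^{-(\bm\theta^\star)^\top\bm Z(t)}R_t$ with
\[R_t=\exp\Big(\sum_{j=1}^d(f_j-1)\int_0^t\lambda_j(s)\,\mathrm{d}s\Big)\prod_{j=1}^d\Big(\frac{m_{\bm U_j}(\bm\theta^\star)}{f_j}\Big)^{N_j(t)}\prod_{j=1}^d\prod_{r=1}^{N_j(t)}\ell_j(\bm B_{j,r}),\]
where $\ell_j(\bm x)=m_{\bm B_j}(\bar{\bm c}_j^{{\mathbb Q}})\exp\big(-\sum_{l=1}^d\bar c_{lj}^{{\mathbb Q}}x_l\big)$ is the mark likelihood ratio, $\bar{\bm c}_j^{{\mathbb Q}}$ is as in \eqref{cQ}, and the base rates under ${\mathbb Q}$ satisfy $\overline{\lambda}_j^{{\mathbb Q}}=\overline{\lambda}_jf_j$ by \eqref{blq}.

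The crux is the pathwise bound $R_t\leqslant e^{t\Lambda(\bm\theta^\star)}$. To establish it I would expand, using integrability of $g_{jl}$,
\[\int_0^t\lambda_j(s)\,\mathrm{d}s=\overline{\lambda}_jt+\sum_{l=1}^dc_{jl}\sum_{r=1}^{N_l(t)}B_{jl,r}-\sum_{l=1}^d\sum_{r=1}^{N_l(t)}B_{jl,r}\int_{t-T_{l,r}}^\infty g_{jl}(v)\,\mathrm{d}v,\]
and then collect terms in $\log R_t$: the contribution $t\sum_j(f_j-1)\overline{\lambda}_j$ equals $t\Lambda(\bm\theta^\star)$ by \eqref{eq:target}; the terms $\sum_j(f_j-1)\sum_l c_{jl}\sum_r B_{jl,r}$ cancel, after relabelling indices, the linear-in-marks part $-\sum_j\sum_l\bar c_{lj}^{{\mathbb Q}}\sum_r B_{lj,r}$ of $\sum_{j,r}\log\ell_j(\bm B_{j,r})$, because $\bar c_{jl}^{{\mathbb Q}}=c_{jl}(f_j-1)$ by \eqref{cQ}; the remaining factor $\prod_j\big(m_{\bm B_j}(\bar{\bm c}_j^{{\mathbb Q}})\,m_{\bm U_j}(\bm\theta^\star)/f_j\big)^{N_j(t)}$ equals $1$ by the fixed-point relation \eqref{fpeP} evaluated at $\bm z=\bm m_{\bm U}(\bm\theta^\star)$; and the only surviving term is the correction $-\sum_j(f_j-1)\sum_l\sum_{r=1}^{N_l(t)}B_{jl,r}\int_{t-T_{l,r}}^\infty g_{jl}(v)\,\mathrm{d}v\leqslant0$, where non-positivity uses $f_j\geqslant1$ (indeed $\bm\theta^\star\geqslant\bm0$ forces $\bm m_{\bm U}(\bm\theta^\star)\geqslant\bm1$, and $\bm f(\cdot)$ is increasing with $\bm f(\bm1)=\bm1$) together with $B_{jl,r}\geqslant0$ and $g_{jl}\geqslant0$. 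Hence $R_t\leqslant e^{t\Lambda(\bm\theta^\star)}$ on every sample path.

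Combining the pieces concludes the argument: on $I_{\bm a}$ we have $\bm Z(t)\geqslant t\bm a$, so $(\bm\theta^\star)^\top\bm Z(t)\geqslant t(\bm\theta^\star)^\top\bm a=t(\bm\theta^\star)^\top\bm a^\star$ by $\bm\theta^\star\geqslant\bm0$ and complementary slackness, whence $L_tI_{\bm a}\leqslant e^{-t(\bm\theta^\star)^\top\bm a^\star+t\Lambda(\bm\theta^\star)}=e^{-t\Lambda^*(\bm a^\star)}$ almost surely. Therefore ${\mathbb E}_{{\mathbb Q}}[(L_tI_{\bm a})^2]\leqslant e^{-t\Lambda^*(\bm a^\star)}{\mathbb E}_{{\mathbb Q}}[L_tI_{\bm a}]=e^{-t\Lambda^*(\bm a^\star)}q_t(\bm a)$, and taking $t^{-1}\log$, letting $t\to\infty$, and invoking $t^{-1}\log q_t(\bm a)\to-\Lambda^*(\bm a^\star)$ yields the required second-moment bound, hence asymptotic efficiency (in passing this also gives the exceedance analogue $q_t(\bm a)\leqslant e^{-t\Lambda^*(\bm a^\star)}$). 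The main obstacles I anticipate are: (i)~verifying rigorously that the likelihood-ratio identity \eqref{eq: likelihood ratio rewritten Rmarks} and the ensuing telescoping cancellations go through for a genuine vector twist, in particular the bookkeeping of immigrant versus offspring marks and the handling of the residual tail integrals $\int_{t-T_{l,r}}^\infty g_{jl}$, which rests only on integrability of $g_{jl}$; and (ii)~making the convex-analytic claims of the second paragraph precise, namely that $\bm\theta^\star\in{\mathscr D}_\Lambda^\circ$ and that it satisfies the conjugacy and complementary-slackness identities, which follow from essential smoothness and lower semicontinuity of $\Lambda$ but warrant care because $A$ is unbounded and one needs $\bm a^\star$ to lie in the interior of the domain of $\Lambda^*$.
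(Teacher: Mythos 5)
Your proposal is correct and follows the same underlying strategy as the paper's Appendix~C: establish a pathwise Lundberg-type bound $L_tI_{\bm a}\leqslant e^{-t\Lambda^*(\bm a^\star)}$ (via the same cancellations used for the ruin case in Appendix~B) and then deduce the second-moment estimate needed for asymptotic efficiency. The genuine difference is organizational: the paper first proves $d^\star=1$, then treats $d^\star=2$ by enumerating the three configurations of tight/slack constraints and remarking that the pattern extends, whereas you handle arbitrary $d^\star$ in a single stroke by invoking the KKT conditions at $\bm a^\star=\arg\inf_{\bm x\in A}\Lambda^*(\bm x)$ --- namely $\bm\theta^\star=\nabla\Lambda^*(\bm a^\star)\geqslant\bm 0$ and the complementary-slackness identity $\theta_i^\star(a_i^\star-a_i)=0$ --- which yields $(\bm\theta^\star)^\top\bm Z(t)\geqslant t(\bm\theta^\star)^\top\bm a=t(\bm\theta^\star)^\top\bm a^\star$ on $\{I_{\bm a}=1\}$ without case analysis. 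This is cleaner and avoids the paper's ``proof by example'' step; it also makes transparent why components with slack constraints (for which $\theta_i^\star=0$) drop out automatically, which is exactly the phenomenon the paper notices separately in its case~(ii). Your explicit verification that the tail-integral remainder $-\sum_j(f_j-1)\sum_l\sum_r B_{jl,r}\int_{t-T_{l,r}}^\infty g_{jl}(v)\,\mathrm{d}v$ is non-positive because $\bm\theta^\star\geqslant\bm 0$ implies $f_j\geqslant 1$ is a detail the paper leaves implicit but which is indeed the crux of the pathwise bound. One small stylistic note: your second-moment step ${\mathbb E}_{\mathbb Q}[(L_tI_{\bm a})^2]\leqslant e^{-t\Lambda^*(\bm a^\star)}q_t(\bm a)$ is slightly more roundabout than the paper's direct $(L_tI_{\bm a})^2\leqslant e^{-2t\Lambda^*(\bm a^\star)}$, but the conclusion is the same.
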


We finish this subsection by briefly discussing a related, intrinsically more complicated, rare-event probability. 
Observe that $q_t({\bs a})$ corresponds to the {\it intersection} of the events $\{Z_1(t)\geqslant a_1t\},\ldots, \{Z_{d^\star}(t)\geqslant a_{d^\star}t\}.$ 
Consider now, rather than the intersection of events, their {\it union}, e.g.,\ in the case $d^\star=2$,
\[{\mathbb P}\left(\frac{Z_1(t)}{t}\geqslant a_1\:\:\mbox{or}\:\:\frac{Z_2(t)}{t}\geqslant a_2\right).\]
Assume that ${\bs \mu}<{\bs a}$, to make sure that we are dealing with a rare event probability, and let $I$ denote the indicator of the union event. 

Let $A^c$ be the complement of $(-\infty,a_1)\times(-\infty,a_2)$.
Theorem~\ref{thm:LD} asserts that, as $t\to\infty$,
\[\frac{1}{t}\log {\mathbb P}\left(\frac{Z_1(t)}{t}\geqslant a_1\:\:\mbox{or}\:\:\frac{Z_2(t)}{t}\geqslant a_2\right) \to -\inf_{{\bs x}\in A^c}\Lambda^\star({\bs x}).\]
With ${\bs a}^\star :=\arg\inf_{{\bs x}\in A^c}\Lambda^\star({\bs x})$, suppose that we twist by ${\bs\theta}({\bs a}^\star)$. 
If it happens that $a_1^\star<a_1$ and $a_2^\star=a_2$, then, following the reasoning applied above, $\theta_1({\bs a})=0$ and $\theta_2({\bs a})>0$. 
When simulating under this measure, one could however have that $I=1$ due to $Z_1(t)\geqslant a_1t$ (despite the fact that it is more likely to see $Z_2(t)\geqslant a_2t$). 
A similar effect occurs when $a_1^\star=a_1$ and $a_2^\star<a_2$. 
The implication is that in those cases we do not have a bound on the likelihood ratio $L_t$ (as opposed to the case of an intersection of events; see the above analysis for $q_t({\bs a})$). 
There are various ways to deal with this inherent complication; see e.g.,\ the discussions on this issue in \cite{D06, M09}. 
The most straightforward solution is to use the asymptotically efficient algorithm featured in Theorem \ref{aseffq} to {\it separately} estimate the three probabilities
\[{\mathbb P}\left(\frac{Z_1(t)}{t}\geqslant a_1,\frac{Z_2(t)}{t}\geqslant a_2\right),\:\:\:{\mathbb P}\left(\frac{Z_1(t)}{t}\geqslant a_1\right),\:\:\:
{\mathbb P}\left(\frac{Z_2(t)}{t}\geqslant a_2\right),\]
and to add up the resulting estimates.

%\textcolor{red}{NOTES ABOUT $\S$6. (1)~In the defintion of the distribution of the claims under ${\mathbb Q}$ the dimension was $d$ rather than $d^\star$. Also $\bm{\theta}$ has dimension $d^\star$, right? Please check whether I corrected this correctly. (2)~As we discussed, all quantities in the likelihood ratio are to be sampled under ${\mathbb Q}$, so I left out the superscript. In some expressions, I felt I had to write $B_{jl,r}$ rather than $B_{jl}.$ Please check. (3)~I think in Section \ref{EPr} a lot of times the $d$ had to be a $d^\star$. Please check.}

%\noindent
%{\color{blue} RAVIAR. (1) Yes, $\bm{\theta}$ is $d^\star$-dimensional.
%(2) If we do not write $B_{lj}^{\qq}$ explicitly, then we should not do it anywhere. On the top of p.22, we still have the $\qq$ superscript. I think that is the only one.
%(3) Will check. } 

\section{Examples and Numerical Illustrations}\label{sec:Numerics}

In this section, we provide a set of simulation experiments that illustrate the proposed rare event simulation algorithms and assess the achievable efficiency gains relative to conventional simulation methods.
All simulations have been conducted in {\tt Python}; the computer code is available from the authors upon request.
Throughout this section, we consider the bivariate setting for both the Hawkes process $\bm{N}(\cdot) = (N_1(\cdot),N_2(\cdot))^\top$ and  the compound process $\bm{Z}(\cdot) = (Z_1(\cdot),Z_2(\cdot))^\top$, i.e., we set $d = {d^\star} = 2$.

\subsection{Ruin probability}
We focus in this subsection on the net cumulative claim process corresponding to the first component, i.e., $Y_1(t) = Z_1(t) - rt$.
Our objective is to compute the ruin probability 
\[p(u) = {\mathbb P}(\exists \, t>0: Y_1(t)>0)={\mathbb P}(\tau_u<\infty)=\ee_{\qq}[L_{\tau_u}],\]
where we use the notations of  Sections~\ref{sec:RuinProbs} and~\ref{sec:simulation}.
As before, it is assumed that the net profit condition~\eqref{eq: assumption premium rate r} is in place.
Let $p_n(u)$ denote our importance sampling estimator, see Eqn.~\eqref{eq: def IS estimator}.

We distinguish between the cases in which the marks are deterministic and random.
It is anticipated that, due to the increased variability of the driving Hawkes process, the ruin probabilities will be larger under random marks than under deterministic marks (obviously assuming that the mean mark sizes in the random mark model equal their counterparts in the deterministic mark model).
By applying our efficient simulation approach we can quantify this effect. 

In the case of deterministic marks, the model primitives are assumed to take the following form. 
For $i,j=1,2$, 
\begin{align*}
    g_{ij}(t) \equiv g_i(t) = e^{-\alpha_i t}, 
    \quad B_{ij} = \beta_{ij},
    \quad U_{ij} \sim \text{Exp}(u_{ij}),
\end{align*}
where $\alpha_i ,\beta_{ij},u_{ij}> 0$.
We note that under deterministic marks, the likelihood ratio $L_{\tau_u}$ given in Eqn.~\eqref{eq: likelihood ratio rewritten Rmarks} simplifies considerably.
Also note that in this case $m_{U_{ij}}(\theta) = u_{ij}(u_{ij} - \theta)^{-1}$ for any $\theta<u_{ij}$ and $i,j=1,2$.
The specific parameters used in the simulation experiments are provided in the captions of the figures and tables that follow.

In order to be able to evaluate the likelihood ratio $L_{\tau_u}$, we first calculate the `twist vector' $(\theta^\star,0)$, where $\theta^\star$ is found by solving $\Psi_1(\theta^\star) = 0$.
We then exponentially twist ${\bs Y}(\cdot)$ by $(\theta^\star,0)$, using the change of measure introduced in Section~\ref{CoM}, enabling us to sample the likelihood ratio $L_{\tau_u}$ under the measure $\qq$, after which we can compute the importance sampling estimator $p_n(u)$.
Recall that under the measure $\qq$, we have that the event $\{\tau_u <\infty\}$ happens with probability one, since the twisted process has positive drift and will hit any level $u>0$.
Figure~\ref{fig: convergence ruin prob det marks} illustrates the validity of Theorem~\ref{thm:rp} by showing the convergence of $u^{-1}\log  p_n(u)$ to the logarithmic decay rate $-\theta^\star$ as $u$ grows large; it also provides insight into the speed of convergence for this specific instance.

\begin{figure}
\includegraphics[scale=.47]{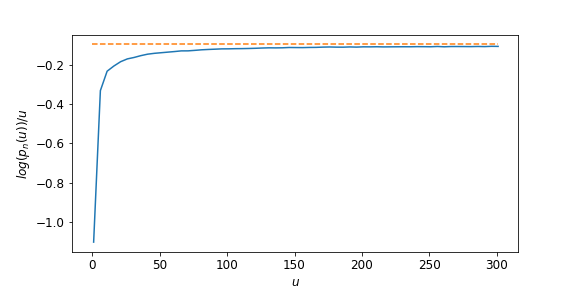}
\caption{\small \textit{Convergence of $u^{-1}\log  p_n(u)$ to the logarithmic decay rate $-\theta^\star$ for the marginal process $Y_1(\cdot)$ in the bivariate model with deterministic marks. 
Chosen parameters are $\overline{\lambda}_1 = \overline{\lambda}_2 = 0.5$, $\alpha_1 = 2$, $\alpha_2 = 1.5$, $\beta_{11} = 0.5$, $\beta_{12} = 0.25$, $\beta_{21} = 0.3$, $\beta_{22} = 0.4$, $\ee U_{11} = 2$, $\ee U_{12} = \ee U_{21} = 2.5$, $\ee U_{22} = 3$ and $r=8$. 
In this setting, solving $\Psi_1(\theta^\star) = 0$ yields $\theta^\star = 0.097$.}}
\label{fig: convergence ruin prob det marks}
\end{figure}

In the case of random marks, we take
\begin{align*}
    g_{ij}(t) \equiv g_i(t) = e^{-\alpha_i t}, 
    \quad B_{ij} \sim \text{Exp}(\gamma_{ij}),
    \quad U_{ij} \sim \text{Exp}(u_{ij}),
\end{align*}
where $\alpha_i, \gamma_{ij}, u_{ij}> 0$. 
To study the effect of the marks being random rather than deterministic, we take ${\mathbb E}B_{ij}=1/\gamma_{ij}=\beta_{ij}$ (recalling that the $\beta_{ij}$ were the deterministic marks that we have used in the first experiment).
As before, we need to evaluate the likelihood ratio $L_{\tau_u}$, for which we first solve the equation $\Psi_1(\theta^\star) = 0$.
This requires solving the fixed-point equation~\eqref{eq: fixed point equation pgf of S_j (steady state)} with random marks, where the i.i.d.\ assumption that was imposed on the marks $B_{ij}$ implies that, for $j=1,2$, we have
\begin{align}
    f_j(\bm{z}) = z_j
    \frac{\gamma_{1j}}{\gamma_{1j} - c_{1}(f_1(\bm{z})-1)}
    \frac{\gamma_{2j}}{\gamma_{2j} - c_{2}(f_2(\bm{z})-1)}.
\end{align}
Then we again exponentially twist ${\bs Y}(\cdot)$ by $(\theta^\star,0)$ to sample under the measure $\qq$ and compute $L_{\tau_u}$.
Figure~\ref{fig: convergence ruin prob rand marks} confirms convergence of $u^{-1}\log p_n(u)$ to $-\theta^\star$.
We note that the decay rate $-\theta^\star$ is larger than in the case with deterministic marks, reflecting that the increased variability due to the random marks leads to a larger ruin probability.

\begin{figure}
\includegraphics[scale=.47]{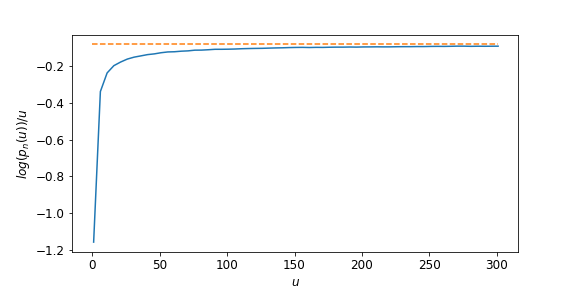}
\caption{\small \textit{Convergence of $u^{-1}\log p_n(u)$ to the logarithmic decay rate $-\theta^\star$ for the marginal process $Y_1(\cdot)$ in the bivariate model with random marks. 
Chosen parameters are $\overline{\lambda}_1 = \overline{\lambda}_2 = 0.5$, $\alpha_1 = 2$, $\alpha_2 = 1.5$, $\ee B_{11} = 0.5$, $\ee B_{12} = 0.25$, $\ee B_{21} = 0.3$, $\ee B_{22} = 0.4$, $\ee U_{11} = 2$, $\ee U_{12} = \ee U_{21} = 2.5$, $\ee U_{22} = 3$ and $r=8$. 
In this setting, solving $\Psi_1(\theta^\star) = 0$ yields $\theta^\star = 0.082$.}}
\label{fig: convergence ruin prob rand marks}
\end{figure}

Next, we study the efficiency of the proposed estimators in terms of the number of runs needed to reach a predefined level of precision.
%As before, we consider ruin in the first component in the bivariate model.
%We compare the importance sampling estimators in the setting with deterministic and random marks.
We continue generating runs until the relative standard error of the importance sampling estimators become smaller than a precision parameter $\epsilon > 0$.
More precisely, given $\epsilon > 0$, we denote the relative standard error (after $n$ runs, that is) and the number of required runs by
\begin{align} \label{eq: relative std error IS estimator}
    \epsilon_n := \frac{\sqrt{v_{n,{\rm IS}}(u)}}{ p_n(u)\sqrt{n}}, \quad \hat{n} := \inf\{n\in{\mathbb N} \, : \, \epsilon_n < \epsilon\},
\end{align}
respectively,
where
\[v_{n,{\rm IS}}(u):=\frac{1}{n} \sum_{m=1}^n \big(L_{\tau_u}^{(m)} -  p_n(u)\big)^2.\]

We denote the number of runs required under deterministic marks by $\hat{n}_{d}$ and under random marks by $\hat{n}_{r}$.
Clearly, $\hat{n}_{d}$ and $\hat{n}_{r}$ vary per experiment; we remedy this by performing the entire procedure multiple times and taking the average.
We also display the associated Lundberg bounds, see Eqn.~\eqref{eq: lundberg bound}.
The twist parameter corresponding to deterministic marks is denoted by $\theta^\star_d$, its counterpart for random marks is denoted by $\theta^\star_r$.
The numbers in Table~\ref{tab: run time bivariate Hawkes} confirm that random marks consistently lead to larger ruin probabilities.
In addition, as expected, the number of runs needed grows in $u$ at a very modest pace (despite the fact that $p(u)$ decays essentially exponentially in $u$).

\begin{table}[t]
{\small 
    \centering
    \begin{tabular}{c| c c c| c c c}
         $u$ & $e^{-\theta^\star_d u}$ & ${p}_{\hat{n}_{d}}(u)$ & $\hat{n}_{d}$ & $e^{-\theta^\star_r u}$ & $p_{\hat{n}_{r}}(u)$ & $\hat{n}_{r}$ \\
          \hline
        1 & 9.07$\,\cdot 10^{-1}$ & 3.15$\,\cdot 10^{-1}$ & 109 & 9.21$\,\cdot 10^{-1}$ & 3.32$\,\cdot 10^{-1}$ & 150 \\ 
        2 & 8.23$\,\cdot 10^{-1}$ & 2.65$\,\cdot 10^{-1}$ & 122 & 8.48$\,\cdot 10^{-1}$ & 2.69$\,\cdot 10^{-1}$ & 173  \\ 
        5 & 6.15$\,\cdot 10^{-1}$ & 1.52$\,\cdot 10^{-2}$ & 128 & 6.62$\,\cdot 10^{-1}$ & 1.75$\,\cdot 10^{-1}$ & 199 \\ 
        10 & 3.79$\,\cdot 10^{-1}$ & 7.89$\,\cdot 10^{-2}$ & 136 & 4.39$\,\cdot 10^{-1}$ & 8.45$\,\cdot 10^{-2}$ & 232 \\
        20 & 1.43$\,\cdot 10^{-1}$ & 2.28$\,\cdot 10^{-2}$ & 142 & 1.92$\,\cdot 10^{-1}$ & 2.68$\,\cdot 10^{-2}$ & 267 \\
        50 & 7.78$\,\cdot 10^{-3}$ & 8.89$\,\cdot 10^{-4}$ & 204 & 1.62$\,\cdot 10^{-2}$ & 1.64$\,\cdot 10^{-3}$  & 386 \\
        100 & 6.05$\,\cdot 10^{-5}$ & 5.83$\,\cdot 10^{-6}$ & 245 & 2.64$\,\cdot 10^{-4}$ & 2.18$\,\cdot 10^{-5}$ & 533 \\
        200 & 3.66$\,\cdot 10^{-9}$ & 3.49$\,\cdot 10^{-10}$ & 302 & 6.95$\,\cdot 10^{-8}$ & 4.70$\,\cdot 10^{-9}$ & 595
    \end{tabular}
    \caption{\label{tab: run time bivariate Hawkes}\small \textit{Lundberg bounds, ruin probabilities, and number of runs needed to reach a precision of $\epsilon = 0.05$, for deterministic (left panel) and random marks (right panel). 
    Chosen parameters are: $\overline{\lambda}_1 = \overline{\lambda}_2 = 0.5$, $\alpha_1 = 2$, $\alpha_2 = 1.5$, $\ee B_{11} = 0.5$, $\ee B_{12} = 0.25$, $\ee B_{21} = 0.3$, $\ee B_{22} = 0.4$, $\ee U_{11} = 2$, $\ee U_{12} = \ee U_{21} = 2.5$, $\ee U_{22} = 3$ and $r=8$. 
    The twist parameters are $\theta_d^\star = 0.097$ and $\theta_r^\star = 0.082$.
    }}
}
\end{table}

In the remainder of this subsection, we consider the setting of random marks.
In the next experiment we assess the computational advantage of the importance sampling estimator (using the measure ${\mathbb Q}$; indicated by subscript IS) when compared to the conventional Monte Carlo estimator (using the measure ${\mathbb P}$; indicated by subscript MC).
Our goal is to compare the time it takes for both estimators to generate a sufficiently precise estimate of $p(u)$.
For the IS estimator, we use Eqn.~\eqref{eq: relative std error IS estimator}.
For the conventional MC estimator based on $n$ runs, denoted by $p_{n,{\rm MC}}(u)$, we have that
\begin{align}
    \epsilon_n = \frac{\sqrt{v_{n,{\rm MC}}(u)}}{p_{n,{\rm MC}}(u)\sqrt{n}} \approx \frac{1}{\sqrt{p_{n,{\rm MC}}(u)\,n}},
\end{align}
since the variance $v_{n,{\rm MC}}(u) := p_{n,{\rm MC}}(u)(1-p_{n,{\rm MC}}(u)) \approx p_{n,{\rm MC}}(u)$ for small probabilities.

In Table~\ref{tab: MC vs IS ruin}, we display the estimates of the ruin probabilities using MC and IS, including the average number of runs needed (in the table denoted by $\hat{n}_{\rm MC}$ and $\hat{n}_{\rm IS}$).
As the absolute duration of each run highly depends on the specific hardware used, the programming language, the number of cores, etc., we decided to work with the {\it speedup ratio}, denoted by $\kappa$, which is the ratio of the simulation time needed under MC (to obtain the desired precision, that is) and its counterpart under IS.

Whereas MC is somewhat more efficient for (very) low values of $u$, IS dominates already for moderate values of $u$.
For $u$ larger than $60$, it turned out to be even infeasible to obtain an MC estimate within a reasonable amount of time, whereas IS estimates can still be efficiently obtained. 
For instance, by extrapolating the results we found for smaller values of $u$, MC would take approximately $18$ hours for $u=70$ in our simulation environment; it would take even $1200$ hours for $u=100$.
In these cases, we {\it estimated} $\kappa$ by extrapolation of the running times under MC (growing effectively exponentially in $u$) and those under IS (growing effectively linearly in $u$); in the table these estimated values are given in italics.
We conclude from the table that the speedup achieved by applying IS can be huge, particularly in the domain that ruin is rare; for $u=300$ the speedup is expected to be as high as $9.00\,\cdot 10^{15}$.
%We also remark that our approach yields the same quantitative results as the approach taken in~\cite{ZBGG15}, since our choice of $\epsilon = 0.05$ corresponds  to their relative precision of $10\%$ with respect to a $95\%$ confidence interval (when approximating the quantile value 1.96 by 2). 

% Update 221109: use our own approach of relative standard error and OLS estimate
% NOTE: for MC estimator, we use t_giveup = 25. After that, the probability of ruin
% becomes so small that it is negligible.
% u = [1,2,3,5,10,20,30,40,50,60,70,80,100,200,300]
% p_MC = [0.331, 0.271, 0.239, 0.165, 0.0840, 0.0271, 0.00971, 0.00408,  0.00162, ..]
% n_MC = [828, 1095, 1357, 2013, 4368, 14452, 38639, 96074 , 263106]
% T_MC = [3.29, 4.44, 5.40, 8.52, 18.9, 69, 239, 1211, 5433, ]$
% p_IS = [0.332, 0.269, 0.237, 0.175, 0.0845, 0.0268, 0.0102, 0.00401, 0.00164, 0.000647, 0.000277, 0.000112, 2.18e-05, 4.70e-09, 1.25e-12]
% v_IS = [0.0414, 0.0347, 0.0239, 0.0161, 0.00419, 0.000492, 7.29e-05, 1.16e-05, 2.18e-06, 4.36e-07, 8.45e-08, 1.64e-08, 3.51e-17, ]
% n_IS = [150, 173, 187, 199, 232, 267, 306, 355, 386, 401, 410, 460, 533, 595, 683]
% T_IS = [9.3, 14, 18, 19.9, 26.1, 44.8, 64.8, 89.4, 102, 114, 131, 158, 210, 407, 596]
% \kappa = [4.62e-01, 3.60e-01, 3.22e-01, 3.85e-01, 5.89e-01, 1.38e+00, 3.84e+00, 1.12e+01, 3.96e+01, 1.43e+02, 4.98e+02, 1.67e+03, 2.04e+04, 1.18e+10, 9.00e+15]

\begin{table}[h!]
    \centering {\small
    \begin{tabular}{c| c c| c c c| c c}
         $u$ & $p_{\hat{n},{\rm MC}}(u)$ & $\hat{n}_{\rm MC}$  & $p_{\hat{n},{\rm IS}}(u)$ & $v_{\hat{n},{\rm IS}}(u)$ & $\hat{n}_{\rm IS}$ & $\kappa$  \\ \hline
        1 & 3.31$\,\cdot 10^{-1}$ & 807  & 3.32$\,\cdot 10^{-1}$ & 4.92$\,\cdot 10^{-2}$ & 150 & 4.62$\,\cdot 10^{-1}$ \\
        2 & 2.71$\,\cdot 10^{-1}$ & 1095 & 2.69$\,\cdot 10^{-1}$ & 3.47$\,\cdot 10^{-2}$ & 173 & 3.60$\,\cdot 10^{-1}$  \\
        3 & 2.39$\,\cdot 10^{-1}$ & 1357 & 2.37$\,\cdot10^{-1}$ & 2.41$\,\cdot 10^{-2}$ & 187 & 3.22$\, \cdot 10^{-1}$ \\
        5 & 1.65$\,\cdot 10^{-1}$ & 2013  &  1.75$\,\cdot 10^{-1}$ & 1.61$\,\cdot 10^{-2}$ & 199 & 3.85$\,\cdot 10^{-1}$  \\
        10 & 8.40$\,\cdot 10^{-2}$ & 4368  & 8.45$\,\cdot 10^{-2}$ & 4.19$\,\cdot 10^{-3}$ & 232 & 5.89$\,\cdot 10^{-1}$ \\
        20 & 2.71$\,\cdot 10^{-2}$ & 14012 & 2.68$\,\cdot 10^{-2}$ & 4.92$\,\cdot 10^{-4}$ & 267 & 1.38$\,\cdot 10^{0}$  \\
        30 & 9.71$\, \cdot 10^{-3}$ & 38639 & 1.02$\,\cdot 10^{-2}$ & 7.29$\,\cdot 10^{-5}$ & 306 & 3.84$\,\cdot 10^{0}$ \\
        40 & 4.08$\,\cdot 10^{-3}$ & 96074 & 4.01$\,\cdot 10^{-3}$ & 1.16$\,\cdot 10^{-5}$ & 355 & 1.12$\,\cdot 10^{1}$ \\
        50 & 1.62$\,\cdot 10^{-3}$ & 263106 & 1.64$\,\cdot 10^{-3}$ & 2.44$\,\cdot 10^{-6}$ & 386 & 3.96$\,\cdot 10^{1}$   \\
        60 & 6.55$\,\cdot 10^{-4}$ & 747083 & 6.46$\,\cdot 10^{-4}$ & 4.36$\,\cdot 10^{-7}$ & 401 & 1.43 $\cdot  10^{2}$ \\
        70 & n/a & n/a & 2.77$\,\cdot 10^{-4}$ & 8.45$\,\cdot 10^{-8}$ & 410 & $\textrm{{\it 4.98}}$ $\cdot \textrm{{\it 10}}^{\,\textrm{{\it 2}}\,}$ \\
        80 & n/a & n/a & 1.12$\,\cdot 10^{-4}$ & 1.64 $\,\cdot10^{-8}$ & 460 & $\textrm{{\it 1.67}}$ $\cdot \textrm{{\it 10}}^{\,\textrm{{\it 3}}\,}$ \\
        100 & n/a & n/a & 2.18$\,\cdot 10^{-5}$ & 5.98$\,\cdot 10^{-10}$ & 533 & $\textrm{{\it 2.04}}$ $\cdot \textrm{{\it 10}}^{\,\textrm{{\it 4}}\,}$  \\ 
        200 & n/a & n/a & 4.70$\,\cdot 10^{-9}$ & 3.51$\,\cdot 10^{-17}$ & 595 & $\textrm{{\it 1.18}}$ $\cdot \textrm{{\it 10}}^{\,\textrm{{\it 10}}\,}$\\
        300 & n/a & n/a & 1.25$\,\cdot 10^{-12}$ & 2.71$\,\cdot 10^{-24}$ & 683 & $\textrm{{\it 9.00}}$ $\cdot \textrm{{\it 10}}^{\,\textrm{{\it 15}}\,}$ \\
    \end{tabular}}
    \caption{\small \textit{Ruin probabilities, number of runs needed to reach a precision of $\epsilon = 0.05$, and speedup ratio $\kappa$, using Monte Carlo (MC) and Importance Sampling (IS). 
    Chosen parameters are as in the caption of Table~\ref{tab: run time bivariate Hawkes}.}}
    \label{tab: MC vs IS ruin}
\end{table}

% with u = [1,2,5,10,20,50,100,200]
% v_{\rm MC} = [2.25$\,\cdot 10^{-1}$, 1.96$\,\cdot 10^{-1}$, 1.27$\,\cdot 10^{-1}$, 7.95$\,\cdot 10^{-2}$, 2.64$\,\cdot 10^{-2}$, ]

\subsection{Exceedance probability}

In this subsection, we numerically illustrate the rare event simulation procedure proposed in Section~\ref{EPr}.
We consider the simulation-based computation of the bivariate exceedance probability
\begin{align*}
    q_t(a_1,a_2) = \pp\left(\frac{Z_1(t)}{t}\geqslant a_1, \frac{Z_2(t)}{t}\geqslant a_2 \right),
\end{align*}
where we assume $(a_1,a_2) \not\leqslant (\mu_1,\mu_2) := \lim_{t\to\infty} (Z_1(t)/t, Z_2(t)/t)$ to ensure that we are dealing with an event that becomes increasingly rare as $t\to\infty.$
%Note that this is distinct from the marginal ruin probability, as we consider both dimensions instead of a marginal, and that we have a fixed time $t>0$ where exceedance must occur.
By Theorem~\ref{thm:LD},
\begin{align} \label{eq: ldp for exceedance bivariate}
    \lim_{t\to\infty}\frac{1}{t} \log q_t(a_1,a_2) = - \inf_{(x_1,x_2)\in A}\Lambda^\star(x_1,x_2),
\end{align}
where $A = [a_1,\infty)\times [a_2,\infty)$.
Denote the minimizer of the RHS of~\eqref{eq: ldp for exceedance bivariate} by $\bm{a}^\star = (a_1^\star, a_2^\star)$, which can be obtained using standard optimization techniques since $\Lambda^\star(\cdot)$ is a convex function.

Consider estimating $q_t(a_1,a_2)$ by importance sampling.
More precisely, with $I_{(a_1,a_2)}$ denoting the indicator of the event we are interested in, and $L_t$ the likelihood ratio given in Eqn.~\eqref{eq: likelihood ratio rewritten Rmarks}, we have
\begin{align}
    q_t(a_1,a_2) = \ee_\qq[L_t I_{(a_1,a_2)}].
\end{align}
Let $q_{t,n}(a_1,a_2)$ be the importance sampling estimator for $q_t(a_1,a_2)$ as defined in Eqn.~\eqref{eq: def IS estimator exceedance}.
To find the twist parameter, we solve the optimization problem
\begin{align*}
    \bm{\theta}(\bm{a}^\star) 
    = (\theta_1(a_1^\star,a_2^\star),\theta_2(a_1^\star,a_2^\star))
    = \arg\sup_{\bs\theta} (\bm{\theta}^\top \bm{a}^\star - \Lambda(\bm{\theta})).
\end{align*}

Figure~\ref{fig: convergence exc prob rand marks} illustrates the behavior of $q_t(a_1,a_2)$ as $t$ grows, converging to $-\inf_{\bm{x}\in A} \Lambda^\star(\bm{x}) = -\Lambda^\star(\bm{a})$, as stated in Eqn.~\eqref{eq: ldp for exceedance bivariate}.
We choose $a_1 = 10 > 3.90 = \mu_1$ and $a_2=12 > 4.76 = \mu_2$, such that $\{Z_1(t) \geqslant a_1 t, Z_2(t) \geqslant a_2 t\}$ is an increasingly rare event as $t$ grows.
For our specific parameters $\bm{\theta}(\bm{a}^\star) > \bm{0}$ (componentwise, that is), corresponding to $\bm{a}^\star = \bm{a} = (a_1,a_2)$.
Intuitively, this means that the most probable way in which the process $(Z_1(t)/t,Z_2(t)/t)_{t\in\rr_+}$  reaches the region $A=[a_1,\infty)\times[a_2,\infty)$ is a straight line from  the origin to $(a_1,a_2)$.

Next, we quantify the computational advantage of the IS estimator over the conventional MC estimator, using the same approach as the one underlying Table~\ref{tab: MC vs IS ruin}.
As before, we run simulations for both methods until the relative standard error $\epsilon_n$ is under the desired level of precision $\epsilon$.
Table~\ref{tab: MC vs IS exceedance} displays the comparison between MC and IS, for different values of $t$. 
As $t$ increases, MC becomes infeasible due to the very steeply increasing number of runs needed as well as the simulation time needed per run.
Already at $t=15$, it would take approximately $69$ hours in our simulation environment.
IS, however, remains feasible, even in the domain of extremely small probabilities.
Note that the number of runs needed for IS does initially not increase in a monotone fashion, which is due to the fact that for small $t$ the process is not yet in the regime where the exceedance event is rare.
We also note the speedup ratio $\kappa$ of the exceedance probabilities increases more steeply (in $t$) than that of the ruin probabilities (in $u$).

\begin{figure}
\includegraphics[scale=.47]{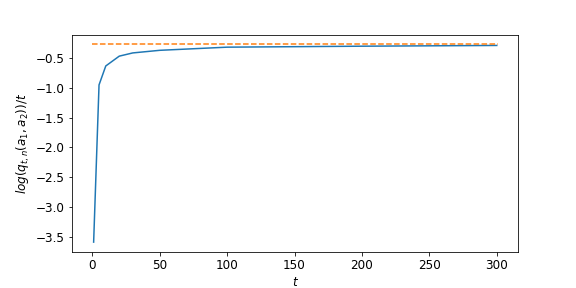}
\caption{\small \textit{Convergence of $t^{-1}\log q_{t,n}(a_1,a_2)$ to the logarithmic decay rate $-\Lambda^\star(\bm{a}^\star)$ for the bivariate process $(Z_1(t)/t,Z_2(t)/t)_{t\in\rr_+}$. 
Chosen parameters are: $a_1 = 10$, $a_2 = 12$, $\overline{\lambda}_1 = \overline{\lambda}_2 = 0.5$, $\alpha_1 = 2$, $\alpha_2 = 1.5$, $\ee B_{11} = 0.5$, $\ee B_{12} = 0.25$, $\ee B_{21} = 0.3$, $\ee B_{22} = 0.4$, $\ee U_{11} = 2$, $\ee U_{12} = \ee U_{21} = 2.5$, $\ee U_{22} = 3$. 
Since $\bm{a}^\star = \bm{a}$, the decay rate is $-\Lambda^\star(\bm{a}) = -0.276$ and the twist parameter is $\bm{\theta}^\star(\bm{a}) = (0.0376, 0.0256)$.}}
\label{fig: convergence exc prob rand marks}
\end{figure}

% 221110 update: use our own approach of relative standard error and OLS estimate.
% t = [1,2,3,5,10,15,20,25,30,40,50,75,100]
% p_IS = [2.61e-02, 1.78e-02, 1.42e-02, 7.95e-03, 1.69e-03, 3.64e-04, 7.83e-05, 1.87e-05, 3.84e-06, 2.18e-07, 1.15e-08, 9.25e-12, 6.31e-15]
% v_IS = [8.27e-03, 3.47e-03, 1.92e-03 5.67e-04, 2.73e-05, 1.40e-06, 7.10e-08, 4.48e-09, 2.23e-10, 8.05e-13, 2.69e-15, 2.16e-24, 1.44e-27]
% n_IS = [4879, 4378, 3800, 3589, 3818, 4223, 4631, 5123, 6043, 6749, 8209, 10106, 11926]
% T_IS = [116, 136, 148, 210, 455, 768, 1144, 1601, 2325, 3685, 4452, 7341, 9820]
% p_MC = []
% n_MC = []
% T_MC = []

\begin{table}[h!]
    \centering {\small 
    \begin{tabular}{c| c c| c c c| c c }
         $t$ & $q_{t,\hat{n},{\rm MC}}(\bm{a})$ & $\hat{n}_{\rm MC}$ & $q_{t,\hat{n},{\rm IS}}(\bm{a})$ & $v_{t,\hat{n},{\rm IS}}({\bs a})$ & $\hat{n}_{\rm IS}$ & $\kappa$ \\ \hline
        1  & 2.58$\,\cdot 10^{-2}$ & 16620 & 2.61$\,\cdot 10^{-2}$ & 8.27$\,\cdot 10^{-3}$ & 4879 & 6.56$\,\cdot 10^{-1}$ \\
        2 & 1.83$\,\cdot 10^{-2}$ & 21983 & 1.78$\,\cdot 10^{-2}$ & 3.47$\,\cdot 10^{-3}$ & 4378 & 9.98$\,\cdot 10^{-1}$ \\
        3 & 1.45$\,\cdot 10^{-2}$ & 29076 & 1.42$\,\cdot 10^{-2}$ & 1.92$\,\cdot 10^{-3}$ & 3800 & 1.63$\,\cdot 10^{0}$ \\
        5  & 7.88$\,\cdot 10^{-3}$ & 50869  & 7.95$\,\cdot 10^{-3}$ & 5.67$\,\cdot 10^{-4}$ & 3589 & 3.66$\,\cdot 10^{0}$  \\
        10 & 1.75$\,\cdot 10^{-3}$ & 205934 &  1.69$\,\cdot 10^{-3}$ & 2.73$\,\cdot 10^{-5}$ & 3818 & 3.04$\,\cdot 10^{1}$ \\
        15 & n/a & n/a & 3.64$\,\cdot 10^{-4}$ & 1.40$\,\cdot10^{-6}$ & 4223 & $\textrm{{\it 3.24}}$ $\cdot \textrm{{\it 10}}^{\,\textrm{{\it 2}}\,}$ \\
        20 & n/a & n/a & 7.83$\,\cdot 10^{-5}$ & 7.10$\,\cdot 10^{-8}$ & 4631 & $\textrm{{\it 3.92}}$ $\cdot \textrm{{\it 10}}^{\,\textrm{{\it 3}}\,}$ \\
        25 & n/a & n/a & 1.87$\,\cdot 10^{-5}$ & 4.48$\,\cdot10^{-9}$ & 5123 & $\textrm{{\it 5.04}}$ $\cdot \textrm{{\it 10}}^{\,\textrm{{\it 4}}\,}$ \\
        30 & n/a & n/a & 3.84$\,\cdot 10^{-6}$ & 2.23$\,\cdot 10^{-10}$ & 6043 & $\textrm{{\it 6.25}}$ $\cdot \textrm{{\it 10}}^{\,\textrm{{\it 5}}\,}$ \\
        40 & n/a & n/a & 2.18$\,\cdot 10^{-7}$ & 8.05$\,\cdot 10^{-13}$ & 6749 & $\textrm{{\it 1.28}}$ $\cdot \textrm{{\it 10}}^{\,\textrm{{\it 8}}\,}$ \\
        50 & n/a & n/a & 1.15$\,\cdot 10^{-8}$ & 2.69$\,\cdot 10^{-15}$ & 8209 & $\textrm{{\it 3.43}}$ $\cdot \textrm{{\it 10}}^{\,\textrm{{\it 10}}\,}$  \\
        75 & n/a & n/a & 9.25$\,\cdot 10^{-12}$ & 2.16$\,\cdot 10^{-24}$ & 10106 & $\textrm{{\it 3.93}}$ $\cdot \textrm{{\it 10}}^{\,\textrm{{\it 16}}\,}$  \\
        100& n/a & n/a & 6.31$\,\cdot 10^{-15}$ & 1.44$\,\cdot 10^{-27}$ & 11926 & $\textrm{{\it 5.56}}$ $\cdot \textrm{{\it 10}}^{\,\textrm{{\it 22}}\,}$
    \end{tabular}}
    \caption{\small \textit{Estimation of the exceedance probability $q_t(\bm{a})$, number of runs needed to reach a precision of $\epsilon = 0.05$, and speedup ratio $\kappa$, using Monte Carlo (MC) and Importance Sampling (IS).  
    Chosen parameters are as in the caption of Figure~\ref{fig: convergence exc prob rand marks}}}.
    \label{tab: MC vs IS exceedance}
\end{table}

\section{Concluding Remarks}\label{sec:Con}

This paper has established a large deviations principle for multivariate compound Hawkes processes, with the underlying Hawkes processes admitting  general decay functions and random marks. 
In order to prove the LDP, the main technical hurdle concerned proving that the limiting cumulant is steep.
Our steepness proof is methodologically novel, in that we manage to show that the derivative of the cumulant grows to infinity when approaching the boundary of its domain, but, remarkably, without having an explicit characterization of this domain.
Then the logarithmic asymptotics of the corresponding ruin probability are identified, the proof of which relies  (in the lower bound) on the LDP. 
In addition, the logarithmic asymptotics of the multivariate net cumulative claim attaining a rare value (at a given point in time, that is) are established.
The final contribution of the paper concerns the development of rare event simulation procedures based on importance sampling, proven to be asymptotically efficient and analyzed in numerical experiments.

An interesting topic for future research is to consider other types of deviations for multivariate compound Hawkes processes, such as precise or process-level large deviations.
%, and develop importance sampling procedures in these settings.
Furthermore, recalling that the steepness proof does not require knowledge of the boundary of the domain, it could also be explored whether our approach carries over to a broader class of processes with an underlying branching structure.
%Another interesting problem is to derive large deviations results for non-linear multivariate compound Hawkes processes.
%Since the non-linear Hawkes process is not a branching process, this setting requires a genuinely different type of analysis.

\appendix

\section{Steepness in the Univariate Case} \label{appendix: steepness}

In this appendix, we prove steepness of the limiting cumulant in the univariate case. 
The appendix has two objectives. 
First, in this single-dimensional setting, elements that look intricate in the proof of Theorem~\ref{thm:LD} now simplify and become significantly more transparent; indeed, this univariate proof helps the reader navigating the proof that we gave for the multivariate case. 
Second, our proof is of a generic nature, in that it does not rely on the fact that in this univariate setting the distribution of the cluster size is explicitly known. 
This distinguishes our approach from the one followed in \cite{ST10}, which explicitly uses that the cluster size has a Borel distribution.
In the multivariate case, 
the explicit distribution of the joint cluster size is unknown, 
thus prohibiting the approach of \cite{ST10}.

Consider the case of $d=d^\star=1$; we leave out all indices.
Let $S$ denote the total number of events in a cluster. 
To guarantee stability of the Hawkes process, we assume that the univariate counterpart of Assumption~\ref{ass: stability condition} is in place, that is,
\begin{align}
    \mu = \ee[B] c = \ee[B] \int_0^\infty g(v)\,\ddiff v \in (0,1).
\end{align}
We know that, under the univariate counterparts of Assumptions~\ref{ass: mgfs B exist}--\ref{ass: mgfs U exist},
\begin{align}
     \lim_{t\to\infty} \frac{1}{t} \log \ee\big[e^{\theta Z(t)}\big] =\overline{\lambda}( \ee\big[m_U(\theta)^{S}\big] - 1)=:\Lambda(\theta),
\end{align}
where $m_U(\theta) = \ee[e^{\theta U}]$ is the moment generating function of $U$ and $\overline{\lambda}$ is the base rate.
In this univariate case, the probability generating function $f(\cdot)$ of the cluster size $S$ satisfies the fixed-point representation
\begin{align} \label{eq: univariate random mark fixed point}
    f(z) = z \ee\big[e^{Bc(f(z) - 1)}\big].
\end{align}

First, we take derivatives on both sides of Eqn.~\eqref{eq: univariate random mark fixed point} to obtain
\begin{align}
    f'(z) &= \ee\big[e^{Bc(f(z) - 1)}\big]+  zf'(z)  \ee\big[Bc \,e^{Bc(f(z) - 1)}\big] \notag,
\end{align}
which, due to~\eqref{eq: univariate random mark fixed point}, 
can be rewritten as
\[f'(z)=\frac{f(z)/z}{1-b(z)},\]
where $b(z):= z\, \ee[Bc \,e^{Bc(f(z) - 1)}].$
The next step is to show that there exists $\hat{z}\geqslant 1$ such that $b(\hat{z})=1$.
The domain ${\mathscr D}_f$ is obtained from Proposition~\ref{prop: characterize f(z) and D_f explicit} when $d=1$ as ${\mathscr D}_f = [0,\hat{z}\,]$, where $\hat{z}> 1$ is given by
\begin{align} \label{eq: zstar solution uni}
    \hat{z} = \ee[ Bc \, e^{Bc(\hat{x}-1)} ]^{-1},
\end{align}
and where $\hat{x} >1 $ solves the equation
\begin{align} \label{eq: xstar solution uni}
    x\ee[Bc \, e^{Bc(x-1)} ] = \ee[e^{Bc(x-1)} ],
\end{align}
see also~\cite[Theorem 3.1.1]{KZ15}.
Since $f'(z)$ is well-defined for $0 < z < \hat{z}$, we consider what happens when we approach the boundary $\hat{z}$.
We compute, using Eqns.~\eqref{eq: zstar solution uni} and~\eqref{eq: xstar solution uni} and the fact $f(\hat{z}) = \hat{x}$, that
\begin{align}
    b(\hat{z}) = \hat{z} \, \ee[Bc \,e^{Bc(\hat{x} - 1)}] = 1.
\end{align}
Hence, we obtain
\begin{align}
    \lim_{z\uparrow \hat{z}} f'(z) = \lim_{z\uparrow \hat{z}} \frac{f(z)/z}{1 - b(z)} = \infty,
\end{align}
also noting that for the numerator $f(\hat{z})/\hat{z} = \hat{x}/\hat{z} > 0$.

Concerning the steepness of $\Lambda(\cdot)$, let there be a $\hat{\theta}> 0$ such that $m_U(\hat{\theta}) = \hat{z}$, which is the univariate counterpart of Assumption~\ref{ass: mgfs U exist}.
We then have that
\begin{align}
    \liminf_{\theta \uparrow \hat{\theta}}  \Lambda'(\theta)
    \geqslant \overline{\lambda} \,\ee[U]\, \ee\big[ S m_U(\hat{\theta})^{S-1}\big] \geqslant  \overline{\lambda}\, \ee[U] \,f'(\hat{z})= \infty,
\end{align}
where the first inequality is due to Fatou's lemma, and the second inequality is due to $m_U'(\theta)\geqslant {\mathbb E}[U]$ for any $\theta\geqslant 0$.

\section{Proof of Theorem~\ref{aseff}}\label{app:B}

\begin{proof} 

In order to eventually prove that our estimator is asymptotically efficient, the main idea is to find an upper bound on $L_{\tau_u}$. 
By definition of the conditional intensities $\lambda_j(s)$ and $\lambda_j^{\mathbb Q}(s)$ in~\eqref{lss}, we have
\begin{align*}
    \exp\left(-\int_0^{\tau_u}\sum_{j=1}^d \big(1-f_j(\bm{m}_{\bm{U}}(\bm{\theta}^\star))\big)
   \lambda_j(s)\ddiff s\right) 
    =&\:\exp\left(-\int_0^{\tau_u}\sum_{j=1}^d\overline\lambda_j(1-f_j(\bm{m}_{\bm{U}}(\bm{\theta}^\star)))\,{\rm d}s\right)
    \\
    &\:\hspace{-4.5cm}\times\exp\left(-\int_0^{\tau_u}\sum_{l=1}^d\sum_{j=1}^d\sum_{r=1}^{N_j(s)} B_{lj,r}g_{lj}(s-T_{j,r})(1-f_l(\bm{m}_{\bm{U}}(\bm{\theta}^\star)))\,{\rm d}s\right),
\end{align*}
where we switched the order of the summations over $j$ and $l$.
Then observe that, recalling that $\theta^\star$ solves the equation $\Psi_i(\theta^\star)=0$,
\[\exp\left(-\int_0^{\tau_u}\sum_{j=1}^d\overline\lambda_j(1-f_j(\bm{m}_{\bm{U}}(\bm{\theta}^\star)))\,{\rm d}s\right) =
\exp\left(-{\tau_u}\sum_{j=1}^d\overline\lambda_j(1-f_j(\bm{m}_{\bm{U}}(\bm{\theta}^\star)))\right)= e^{r\theta^\star \tau_u}.\]
In addition, note that $Z_i(\tau_u)-r\tau_u> u$ due to the definition of $\tau_u$, which implies that
\begin{align*}
    e^{r\theta^\star \tau_u}e^{-\theta^\star Z_i(\tau_u)} \leqslant e^{-\theta^\star u}.
\end{align*}
Also observe that since $c_{lj} = \norm{g_{lj}}_{L^1(\rr_+)}$, we have the bound
\begin{align}\notag
    \exp\Bigg(\int_0^{\tau_u}\sum_{j=1}^d\sum_{l=1}^d\sum_{r=1}^{N_j(s)} B_{lj,r}& g_{lj}(s-T_{j,r})(f_j(\bm{m}_{\bm{U}}(\bm{\theta}^\star))-1){\rm d}s\Bigg)\\&\leqslant
    \exp\left(\sum_{j=1}^d\sum_{l=1}^d \sum_{r=1}^{N_j(\tau_u)} B_{lj,r} c_{lj}(f_l(\bm{m}_{\bm{U}}(\bm{\theta}^\star))-1)\right).
    \label{qblj}
\end{align}
Finally, the contribution to the likelihood ratio $L_{\tau_u}$ due to the random marks is given by
\begin{align} \label{eq: LLhd RMarks}
    \ell_j(\bm{v}) = \exp(-\bm{v}^\top\bar{\bm{c}}^{\qq}) \,m_{\bm{B}_j}(\bar{\bm{c}}^{\qq}),
\end{align}
where $\bar{\bm{c}}^{\qq}$ is the twist parameter for $\bm{B}_j$ as defined in Eqn.~\eqref{cQ}.
As a consequence,
\begin{align} \label{eq: LLhd RMarks written out}
    \exp\left( \sum_{j=1}^d \sum_{r=1}^{N_j(\tau_u)} \log \ell_j\big(\bm{B}_{j,r}\big)\right)
    &=  \exp\left(-\sum_{j=1}^d \sum_{r=1}^{N_j(\tau_u)}\bm{B}_{j,r}^\top\bar{\bm{c}}^{\qq}\right) \prod_{j=1}^d m_{\bm{B}_j}\big(\bar{\bm{c}}^{\qq}\big)^{N_j(\tau_u)},
\end{align}
which implies that the expression given in~\eqref{qblj} cancels against the exponential term in~\eqref{eq: LLhd RMarks written out}, noting that $\bar{\bm{c}}^{\qq} = \big( c_{1j}(f_1(\bm{m}_{\bm{U}}(\bm{\theta}^\star))-1), \dots, c_{dj}(f_d(\bm{m}_{\bm{U}}(\bm{\theta}^\star))-1)\big)^\top$.

Upon combining the above, and after some rewriting, we obtain that 
\begin{align*}
    p(u)\leqslant e^{-\theta^\star u}\, {\mathbb E}_{\mathbb Q} \left[\prod_{j=1}^d
    \exp\left(-\log f_j(\bm{m}_{\bm{U}}(\bm{\theta}^\star))+\log m_{{\bm U}_j}({\bm\theta}^\star) + \log m_{\bm{B}_j}(\bar{\bm{c}}^{\qq})
    \right)^{N_j(\tau_u)}\right].
\end{align*}
As it turns out, the expression in the previous display simplifies considerably, as can be seen as follows. 
By \eqref{eq: fixed point equation pgf of S_j (steady state)}, for any ${\bm z}$,
\[\log f_j({\bm z}) = \log z_j + m_{\bm{B}_{j}}\big( c_{1j}(f_1({\bm z})-1), \dots, c_{dj}(f_d(\bm{z})-1)\big).\]
Now plugging in ${\bm z}= \bm{m}_{\bm{U}}(\bm{\theta}^\star)$, we conclude that the expectation under the new measure ${\mathbb Q}$ fully reduces to unity, again by definition of $\bar{\bm{c}}^{\qq}$.
This means that we have arrived at the upper bound $p(u) = {\mathbb E}_{\mathbb Q} L_{\tau_u}  \leqslant e^{-\theta^\star u}$ and we are now in a position to conclude the statement. 
It follows directly from the observation \[{\mathbb V}{\rm ar}_{\mathbb Q}L_{\tau_u} = {\mathbb E}_{\mathbb Q}L_{\tau_u}^2 - ({\mathbb E}_{\mathbb Q} L_{\tau_u} )^2\leqslant {\mathbb E}_{\mathbb Q} L_{\tau_u}^2 \leqslant e^{-2\theta^\star u},\]
in combination with Theorem~\ref{thm:rp}.
\end{proof}

\section{Proof of Theorem~\ref{aseffq}}\label{app:C}

\begin{proof}
We provide the proof for $d^\star = 1$, followed by a proof by example for $d^\star = 2$, which is easily extended for $d^\star > 2$.
For $d^\star=1$, we first observe that Theorem~\ref{thm:LD} yields that, using that we assumed $a_1>\mu_1$,
\[\lim_{t\to\infty}\frac{1}{t} \log q_t(a_1)   = -\inf_{x\geqslant a_1}\Lambda^\star(x)= - \Lambda^\star(a_1).\] Define
\[\theta(a_1):= \arg\sup_\theta (\theta a_1 -\Lambda(\theta));\]
it is straightforward to verify that $\theta(a_1)$ is positive for $a_1>\mu_1$. 
Letting $I$ be the indicator function of the event $\{Z_1(t)\geqslant a_1t\}$, ${\mathbb Q}$ the probability measure corresponding to exponentially twisting the original measure by $\theta(a_1)$, and $L_t$ the appropriate likelihood, we now have that
\[q_t(a_1) = {\mathbb E}_{\mathbb Q}[L_tI].\]
As an aside, observe that in this setting, unlike the one discussed in Section~\ref{RPr}, we do not have that $I=1$ almost surely under ${\mathbb Q}$. 
This is an immediate consequence of the fact that we constructed ${\mathbb Q}$ such that $\lim_{t\to\infty}{\mathbb E}_{\mathbb Q}Z_1(t)/t = a_1$, so that the central limit theorem implies that in roughly half of the runs, we have that $I=1$.
The likelihood ratio can be evaluated by mimicking the calculations in Section~\ref{RPr}. 
We thus obtain, leaving out indices in this single-dimensional case,
\begin{align*}
    L_t=&\: \exp\left(-\big(1-f(m_U(\theta(a_1)))\big)\int_0^t \lambda(s) \,{\rm d}s\right)e^{-\theta(a_1)Z_1(t)}\\
    & \times \exp\left(\sum_{r=1}^{N(t)} \log\ell(B_r)\right)
    \left(\frac{m_U(\theta(a_1))}{f(m_U(\theta(a_1)))}\right)^{N(t)}.
\end{align*}
Using that $\Lambda(\theta) =\overline\lambda(f(m_U(\theta))-1)$ and 
$m_B(c\,(f(z)-1)) - \log f(z) +\log z = 0$ (the latter identity being a consequence of \eqref{eq: fixed point equation pgf of S_j (steady state)}),
and applying essentially the same majorizations as the ones used in Section~\ref{RPr}, we readily obtain that
\[q_t(a_{1}) ={\mathbb E}_{\mathbb Q} [L_tI] \leqslant e^{\Lambda(\theta(a_1))\,t} \,e^{-\theta(a_1)\,Z_1(t)}I.\]
The event $\{I=1\}$ is equivalent to $\{Z_1(t)\geqslant a_{1}t\}$, so that
\[ q_t(a_{1}) \leqslant e^{\Lambda(\theta(a_1))\,t} \,e^{-\theta(a_1)\,a_{1}t} = e^{-\Lambda^\star(a_1)\,t}.\]
We have thus obtained that
\[q_t(a_1) \leqslant e^{-\Lambda^\star(a_1)\,t},\]
which can be seen as a variant of the classical Chernoff bound.
The asymptotic efficiency for $d^\star=1$ now follows directly. 
To this end, first note that using the very same reasoning we also find that
\[{\mathbb E}_{\mathbb Q}[L_t^2I]\leqslant e^{-2\Lambda^\star(a_1)\,t},\]
so that also ${\mathbb V}{\rm ar}_{\mathbb Q}[L_tI]\leqslant e^{-2\Lambda^\star(a_1)\,t}$. 
Combining this with Theorem~\ref{thm:LD}, we conclude that in this single-dimensional case, we have asymptotic efficiency under the measure ${\mathbb Q}$ defined above.

We now move to the case $d^\star=2$, for which Theorem~\ref{thm:LD} gives
\[\lim_{t\to\infty}\frac{1}{t} \log q_t({\bs a}) =-\inf_{(x_1,x_2)\in A}\Lambda^\star({\bs x}).\]
Due to the convexity of the contour lines of $\Lambda^\star({\bs a})$, with ${\bs a}^\star$ the optimizing ${\bs a}\in A$, three situations can occur: (i)~${\bs a}^\star={\bs a}$, (ii)~$a_1^\star=a_1$ and $a_2^\star > a_2$, and (iii)~$a_1^\star>a_1$ and $a_2^\star=a_2$. 
As, by symmetry, cases (ii) and (iii) are conceptually the same and can therefore be treated identically, we restrict ourselves to discussing cases (i) and (ii) only. 
Let, as before, ${\bs \theta}({\bs a}^\star)$ be the optimizing argument in the definition of $\Lambda^\star({\bs a})$.

In case (i), using standard properties of the Legendre transform, we have that
\[\theta_1({\bs a}^\star) = \frac{\partial}{\partial a_1} \Lambda^\star({\bs a}^\star) >0,\:\:\:
\theta_2({\bs a}^\star) = \frac{\partial}{\partial a_2} \Lambda^\star({\bs a}^\star) >0.\]
We let ${\mathbb Q}$ correspond to the ${\bs\theta}({\bs a}^\star)$-twisted version of the original probability measure. 
Going through the same steps as in the case $d^\star=1$, we obtain that
\[q_t({\bs a}) ={\mathbb E}_{\mathbb Q} [L_tI]\leqslant e^{\Lambda({\bs\theta}({\bs a}^\star))\,t}\,e^{-\theta_1({\bs a}^\star)\,Z_1(t)-\theta_2({\bs a}^\star)\,Z_2(t)}.\]
Then note that the right-hand side of the expression in the previous display is, on the set $\{I=1\}=\{Z_1(t)\geqslant a_1^\star \,t,Z_2(t)\geqslant a_2^\star \,t\}$, bounded from above by
\[e^{\Lambda({\bs\theta}({\bs a}^\star))\,t}\,e^{-\theta_1({\bs a}^\star)\,a_1^\star t-\theta_2({\bs a}^\star)\,a_2^\star t}=e^{-\Lambda^\star({\bs a}^\star)\,t}.\]
This implies $q_t({\bs a})\leqslant e^{-\Lambda^\star({\bs a}^\star)\,t}$, but in addition that ${\mathbb V}{\rm ar}_{\mathbb Q}[L_tI]\leqslant e^{-2\Lambda^\star({\bs a}^\star)\,t}$.
We conclude, using the same reasoning as before, that in this case twisting by ${\bs\theta}({\bs a}^\star)$ yields asymptotic efficiency. 

Case (ii) works similarly. 
Observe that now (using that the line $x=a_1$ is a tangent of the contour lines of the Legendre transform)
\[\theta_1({\bs a}^\star) = \frac{\partial}{\partial a_1} \Lambda^\star({\bs a}^\star) >0,\:\:\:
\theta_2({\bs a}^\star) = \frac{\partial}{\partial a_2} \Lambda^\star({\bs a}^\star) =0.\]
The intuition is that in this case, if $Z_1(t)\geqslant a_1 t$, then with high probability also $Z_2(t)\geqslant a_2t$, as reflected by the fact that 
\[\lim_{t\to\infty}\frac{1}{t}\log{\mathbb P}\left(\frac{Z_1(t)}{t}\geqslant a_1,\frac{Z_2(t)}{t} \geqslant a_2\right)=\lim_{t\to\infty}\frac{1}{t}\log
{\mathbb P}\left(\frac{Z_1(t)}{t}\geqslant a_1\right).\]
Let ${\mathbb Q}$ be the ${\bs\theta}({\bs a}^\star)$-twisted version of the original probability measure.  
After straightforward algebra, we now obtain that
\[q_t({\bs a}) ={\mathbb E}_{\mathbb Q}L_tI \leqslant e^{\Lambda({\bs\theta}({\bs a}^\star))\,t}\,e^{-\theta_1({\bs a}^\star)\,Z_1(t)}=
e^{-\Lambda^\star({\bs a}^\star)\,t},\]
and ${\mathbb V}{\rm ar}_{\mathbb Q}[L_tI]\leqslant e^{-2\Lambda^\star({\bs a}^\star)\,t}$. 
Hence, also in this case twisting by ${\bs\theta}({\bs a}^\star)$ yields asymptotic efficiency.

It can be seen in a direct manner that the same procedure (i.e., working with a twist ${\bs\theta}({\bs a}^\star)$ with non-negative entries) 
can be followed for any $d^\star$ larger than $2$. 
We have thus established the stated result.
\end{proof}

{\small

}


\begin{thebibliography}{111}

\bibitem{ACL15}
{\sc A\"it-Sahalia, Y., Cacho-Diaz, J.~A. \& Laeven, R.~J.~A.} (2015).
Modeling financial contagion using mutually exciting jump processes.
{\it Journal of Financial Economics} 117, 585-606.

\bibitem{ALP14}
{\sc A\"it-Sahalia, Y., Laeven, R.~J.~A. \& Pelizzon, L.} (2014).
Mutual excitation in Eurozone sovereign CDS.
{\it Journal of Econometrics} 183, 151-167.

%\bibitem{ABGK93}
%{\sc Andersen, P.~K., Borgan, {\O}., Gill, R.~D. \& Keiding, N.} (1993).
%{\it Statistical Models Based on Counting Processes.}
%Springer Series in Statistics. Springer, New York.

\bibitem{AG07}
{\sc Asmussen, S \& Glynn, P.} (2007).
{\it Stochastic Simulation: Algorithms and Analysis}. 
Springer, New York. 

\bibitem{BDHM13}
{\sc Bacry, E., Delattre, S., Hoffmann, M. \& Muzy, J.~F.} (2013).
Some limit theorems for Hawkes processes and application to financial statistics.
{\it Stochastic Processes and their Applications} 123, 2475-2499.

\bibitem{BGIPW17}
{\sc Baldwin, A., Gheyas, I., Ioannidis, Ch., Pym, D. \& Williams, J.} (2017). 
Contagion in cyber security attacks. 
{\it Journal of the Operational Research Society} 68, 780-791.

\bibitem{BBH21}
{\sc Bessy-Roland Y., Boumezoued, A. \& Hillairet, C.} (2021). 
Multivariate Hawkes process for cyber risk insurance. 
{\it Annals of Actuarial Science} 15, 14-39.

\bibitem{BT07}
{\sc Bordenave, C. \& Torrisi, G.~L.} (2007).
Large deviations of Poisson cluster processes.
{\it Stochastic Models} 23, 593-625.

\bibitem{CL14}
{\sc Chaumont, L. \& Liu, R.} (2014).
Coding multitype forests: Application to the law of the total population of branching forests.
{\it Transactions of the American Mathematical Society} 368, 2723-2747.

\bibitem{CS08}
{\sc Crane, R. \& Sornette, D.} (2008).
Robust dynamic classes revealed by measuring the response function of a social system.
{\it Proceedings of the National Academy of Sciences} 105, 15649-15653.

\bibitem{DVJ07}
{\sc Daley, D.~J. \& Vere-Jones, D.} (2003).
{\it An Introduction to the Theory of Point Processes. 
Volume~1: Elementary Theory and Methods. Volume~2: General Theory and Structure.} 
Second Edition.
Springer, New York. 

\bibitem{DZ98}
{\sc Dembo, A. \& Zeitouni, O.} (1998).
{\it Large Deviations Techniques and Applications}.
Second Edition.
Springer, New York. 

\bibitem{D06}
{\sc Dieker, T. \& Mandjes, M.} (2006). 
Fast simulation of overflow probabilities in a queue with Gaussian input. 
{\it ACM Transactions on Modeling and Computer Simulation} 16, 119-151.

\bibitem{F19}
{\sc Feng, M., Cai, S.-M., Tang, M., \& Lai, Y.-C.} (2019). 
Equivalence and its invalidation between non-Markovian and Markovian spreading dynamics on complex networks. 
{\it Nature Communications} 10, 3748.

\bibitem{GCW04}
{\sc Ganesh, A., O'Connell, N. \& Wischik, D.} (2004).
{\it Big Queues}.
Springer, Berlin.

\bibitem{GZ18a}
{\sc Gao, X. \& Zhu, L.} (2018). 
Large deviations and applications for Markovian Hawkes processes with a large initial intensity. 
{\it Bernoulli} 24, 2875-2905.

\bibitem{GZ18b} 
{\sc Gao, X. \& Zhu, L.} (2018). 
Limit theorems for Markovian Hawkes processes with a large initial intensity.
{\it Stochastic Processes and their Applications} 128, 3807-3839.

\bibitem{GZ19}
{\sc Gao, X. \& Zhu, L.} (2019).
Affine point processes: Refinements to large-time asymptotics.
Preprint. Available at \url{https://arxiv.org/pdf/1903.06371.pdf}.

\bibitem{GZ21}
{\sc Gao, X. \& Zhu, L.} (2021).
Precise deviations for Hawkes processes.
{\it Bernoulli} 27, 221-248.

\bibitem{GRS20}
{\sc Guo, Q., Remillard, B. \& Swishchuk, A.} (2020)
Multivariate general compound point processes in limit order books.
{\it Risks} 8, 98.

\bibitem{H71}
{\sc Hawkes, A.~G.} (1971).
Spectra of some self-exciting and mutually exciting point processes.
{\it Biometrika} 58, 83-90.

\bibitem{HO74}
{\sc Hawkes, A.~G. \& Oakes, D.} (1974).
A cluster representation of a self-exciting process.
{\it Journal of Applied Probability} 11, 493-503.

\bibitem{H18}
{\sc Hawkes, A.~G.} (2018).
Hawkes processes and their applications to finance: A review.
{\it Quantitative Finance} 18, 193-198.

\bibitem{ILMY22}
{\sc Ikefuji, M., Laeven, R.~J.~A., Magnus, J.~R. \& Yue, Y.} (2022). 
Earthquake risk embedded in property prices: Evidence from five Japanese cities. 
{\it Journal of the American Statistical Association} 117, 82-93. 

\bibitem{JM75}
{\sc Jacod, J. \& M\'emin, J} (1976).
Caractéristiques locales et conditions de continuité absolue pour les semi-martingales.
{\it Zeitschrift für Wahrscheinlichkeitstheorie und Verwandte Gebiete} 35, 1-37.

\bibitem{J75}
{\sc Jagers, P.} (1975).
{\it Branching Processes with Biological Applications.}
First edition.
Wiley, London.

\bibitem{KZ15}
{\sc Karabash, D. \& Zhu, L.} (2015).
Limit theorems for marked Hawkes processes with application to a risk model.
{\it Stochastic Models} 31, 433-451.

\bibitem{KLM21}
{\sc Karim, R.~S., Laeven, R.~J.~A. \& Mandjes, M.} (2021).
Exact and asymptotic analysis of general multivariate Hawkes processes and induced population processes.
Preprint. Available at \url{https://arxiv.org/pdf/2106.03560.pdf}.

\bibitem{L20}
{\sc Lin, Z.-H., Feng, M., Tang, M., Liu, Z., Xu, C., Hui, P.~M. \& Lai, Y.-C.} (2020). 
Non-Markovian recovery makes complex networks more resilient against large-scale failures. 
{\it Nature Communications} 11, 2490.

\bibitem{L09}
{\sc Liniger, T.~J.} (2009).
{\it Multivariate Hawkes Processes.}
PhD Thesis, ETH Z\"urich.

\bibitem{M09}
{\sc Mandjes, M. \&  Blanchet, J.} (2009). 
Rare-event simulation for queues. 
In: G. Rubino and B. Tuffin (eds.): 
{\it Rare Event Simulation Using Monte Carlo Methods}, Wiley, Chichester, pp. 87-124.

\bibitem{O88}
{\sc Ogata, Y.} (1988).
Statistical models for earthquake occurrences and residual analysis for point processes.
{\it Journal of the American Statistical Association} 83, 9-27.

\bibitem{O14}
{\sc de Oliveira, O.} (2014).
The implicit and the inverse function theorems: Easy proofs.
{\it Real Analysis Exchange} 39, 207-218.


\bibitem{RS10}
{\sc Reynaud-Bouret, P. \& Schbath, S.} (2010).
Adaptive estimation for Hawkes processes; application to genome analysis.
{\it Annals of Statistics} 38, 2781-2822.

\bibitem{S76}
{\sc Siegmund, D.} (1976).
Importance sampling in the Monte Carlo study of sequential tests. 
{\it Annals of Statistics} 4, 673-684. 

\bibitem{ST10}
{\sc Stabile, G. \& Torrisi, G.~L.} (2010).
Risk processes with non-stationary Hawkes claims arrivals.
{\it Methodology and Computing in Applied Probability} 12, 415-429.

\bibitem{T11}
{\sc Tu, L.~W.} (2011). 
{\it An Introduction to Manifolds.}
Second edition.
Springer, New York.

\bibitem{Y18}
{\sc Yao, N.} (2018).
Moderate deviations for multivariate Hawkes processes.
{\it Statistics and Probability Letters} 140, 71-76.

\bibitem{ZBGG15}
{\sc Zhang, X., Blanchet, J., Giesecke, K. \& Glynn, P.} (2015).
Affine point processes: approximation and efficient simulation. 
{\it Mathematics of Operations Research} 4, 797-819.

\bibitem{Z13}
{\sc Zhu, L.} (2013).
Ruin probabilities for risk processes with non-stationary arrivals and subexponential claims.
{\it Insurance: Mathematics and Economics} 53, 544-550.

\bibitem{Z14}
{\sc Zhu, L.} (2014).
Process-level large deviations for nonlinear Hawkes point processes.
{\it Annales de l'Institut Henri Poincaré -- Probabilités et Statistiques} 50, 845-871.

\bibitem{Z15}
{\sc Zhu, L.} (2015).
Large deviations for Markovian nonlinear Hawkes processes.
{\it Annals of Applied Probability} 25, 548-581. 

\end{thebibliography}
\end{document}